\documentclass[9pt]{amsart}

\usepackage{soul}
\usepackage{amsmath,graphicx}
\usepackage{amssymb}
\usepackage{amsfonts, amsthm}
\usepackage{todonotes}
\usepackage{enumitem, dsfont}

\usepackage[parfill]{parskip}

\usepackage{color}
\usepackage{xcolor}

\usepackage[colorlinks=true, pdfstartview=FitV,linkcolor=blue,citecolor=blue, urlcolor=black]{hyperref}
\usepackage{cleveref}
\crefformat{equation}{(#2#1#3)}

\makeatletter
\def\subsection{\@startsection{subsection}{2}%
  \z@{.5\linespacing\@plus.7\linespacing}
{.5\baselineskip}%
  {\normalfont\centering\scshape}%
}
\makeatother

\makeatletter
\def\l@subsection{\@tocline{2}{0pt}{2.5pc}{5pc}{}}
\def\l@section{\@tocline{1}{0pt}{0pc}{5pc}{\bfseries}}
\makeatother

\numberwithin{equation}{section}

\def\R{\mathbb{R}}

\def\N{\mathbb{N}}

\def\r3{\R^3}

\newcommand{\ben}{\begin{eqnarray*}}
\newcommand{\een}{\end{eqnarray*}}

\newtheorem{Defi}{Definition}
\newtheorem{theorem}{Theorem}
\newtheorem{proposition}[theorem]{Proposition}
\newtheorem*{proposition*}{Proposition}
\newtheorem{corollary}[theorem]{Corollary}
\newtheorem{lemma}[theorem]{Lemma}
\newtheorem{remark}{Remark}
\newtheorem*{remark*}{Remark}
\newtheorem*{claim*}{Claim}

\numberwithin{theorem}{section}

\crefname{theorem}{Theorem}{Theorems}
\Crefname{theorem}{Theorem}{Theorems}

\crefname{lemma}{Lemma}{Lemmas}
\Crefname{lemma}{Lemma}{Lemmas}

\crefname{proposition}{Proposition}{Propositions}
\Crefname{proposition}{Proposition}{Propositions}

\crefname{corollary}{Corollary}{Corollaries}
\Crefname{corollary}{Corollary}{Corollaries}

\crefname{Defi}{Definition}{Definitions}
\Crefname{Defi}{Definition}{Definitions}

\crefname{remark}{Remark}{Remarks}
\Crefname{remark}{Remark}{Remarks}

\crefname{proposition*}{Proposition}{Propositions}
\Crefname{proposition*}{Proposition}{Propositions}
\crefname{remark*}{Remark}{Remarks}
\Crefname{remark*}{Remark}{Remarks}
\crefname{claim*}{Claim}{Claims}
\Crefname{claim*}{Claim}{Claims}

\def\mk{\mathrm{d}}
\DeclareMathOperator{\supp}{supp}
\DeclareMathOperator{\Leb}{\mathcal{L}}
\let\div\relax
\DeclareMathOperator{\div}{div}
\def\normal{\vec{n}}
\newcommand{\inner}[2]{\left\langle #1, #2\right\rangle}
\DeclareMathOperator{\Id}{Id}
\DeclareMathOperator{\tr}{tr}

\begin{document}

\title[Stability of optimal transport and second variation of the distance]{
  Stability of optimal transport maps and second variation of the $2$-Monge--Kantorovich distance
  }

\author{F.-U. Caja-Lopez}

\address{Department of Mathematics, The University of Texas at Austin (USA)}
\email{funai.caja@utexas.edu}

\author{Matias G. Delgadino}
\address{Department of Mathematics, The University of Texas at Austin (USA)}
\email{matias.delgadino@math.utexas.edu}

\author{Jun Kitagawa}
\address{Department of Mathematics, Michigan State University (USA)}
\email{kitagawa@math.msu.edu}


\begin{abstract}
We establish several quantitative stability estimates for optimal transport maps between non-degenerate densities on uniformly convex domains for the quadratic cost. Under H\"older regularity assumptions, we prove Lipschitz $L^{2}$ (respectively $\mathcal C^{1,\alpha}$) stability estimates for optimal transport maps in terms of the $2$-Monge--Kantorovich distance (respectively $L^{p}$ distances) between pairs of source and target densities. When the continuity assumption is removed, we obtain a Lipschitz $L^{2}$ stability estimate for the Brenier potentials in terms of the $L^{2}$ distance between the source and target densities.

The proofs rely on a precise characterization of the linear response of the Brenier potential along smooth interpolations of the data, obtained by linearizing the Monge--Amp\`ere equation in divergence form. As a further application of this approach, we derive an explicit formula for the second variation of the quadratic Monge--Kantorovich distance.
\end{abstract}

\keywords{Optimal transport, quantitative stability}

\subjclass[2020]{
35J96, 
49K40, 
49Q22} 
\maketitle

\setcounter{tocdepth}{2}
\begingroup
\hypersetup{linkcolor=black}
\endgroup


\section{Introduction}

Given $1\leq p<\infty$, two probability measures $\mu$, $\nu\in \mathcal{P}_p(\mathbb R^d)$ (the space of Borel probability measures on $\R^d$ with finite $p$th order moments), the Monge problem in optimal transport for the $p$th power cost $c(x,y)=\lvert x-y\rvert^p$ consists in minimizing
\begin{equation}\label{eq:Monge}
    \int_{\mathbb R^d}\lvert T(x)-x\rvert^p\,d\mu(x)
\end{equation}
among all maps $T:\mathbb R^d\longrightarrow\mathbb R^d$ satisfying $T_\sharp\mu = \nu$, where $\sharp$ denotes the push forward of a measure. The $p$th root of the minimal value in~\eqref{eq:Monge} above is denoted by $\mk_p(\mu,\nu)$, which is known as the \emph{$p$-Monge--Kantorovich} distance\footnote{This is also sometimes called the \emph{$p$-Wasserstein distance.}}. In \cite{brenier1987decomposition} Brenier showed that when $p=2$ and the source measure is absolutely continuous, there exists a $\mu$-a.e. unique optimal map given by $T(x)=\nabla\phi(x)$, where $\phi$ is a convex function. Under further regularity assumptions on $\mu$, $\nu$ and convexity of their supports, various regularity results on $\phi$ were later proven in \cite{caffarelli1990interior_W2p_estimates, Delanoe91, caffarelli1992reg_convex_potential, caffarelli1992boundary_reg_convex_potential, caffarelli1996boundary_reg_convex_potential, Urbas97} by studying a Monge-Ampère PDE of the form
$$
\det D^2\phi(x)=\frac{f(x)}{g(\nabla\phi(x))},\qquad\nabla\phi(\textnormal{supp}\,\mu)\subset \textnormal{supp}\,\nu,
$$
which is satisfied by $\phi$. These regularity results were later extended to other settings in works such as \cite{DelanoeGe10, DelanoeGe11, FigalliPhil2013W21RegMongeAmpere, FigalliKimMcCann13, GuillenKitagawa15, figalli2019regularity_MA_unbounded, chenLiuWang2021MAglobalReg} among many others. 

Regarding stability of solutions, it is not hard to prove \emph{qualitative} results using compactness arguments. For example, it is known that weak convergence of the target measure implies convergence in measure of the optimal map (see \cite[Corollary 5.23]{villani2009optimal_old_new} for a proof in a very general setting). Similarly in the case $p=2$, Caffarelli proved in \cite{caffarelli1992boundary_reg_convex_potential} a $\mathcal C^{1,\alpha}$ estimate on $\phi$ which implies uniform stability of maps due to the Arzelà--Ascoli theorem. However, \emph{quantitative} stability estimates have proven to be significantly more difficult. This is a current active research area and one of the main objectives of this manuscript. 

Let us give a brief chronological review of the current state of quantitative stability estimates in optimal transport for minimizers of~\eqref{eq:Monge} when $p=2$. To do so, it is convenient to introduce some terminology.
\begin{Defi}\label{def: kantorovich}
    Let $\mu$, $\nu\in \mathcal{P}_2(\R^d)$. We call a minimizer of~\eqref{eq:Monge} with $p=2$ an \emph{optimal map from $\mu$ to $\nu$}. An \emph{optimal plan from $\mu$ to $\nu$} is a minimizer of the functional
    \begin{align*}
        \pi\mapsto \int_{\R^d\times \R^d} \lvert x-y\rvert^2 d\pi(x, y)
    \end{align*}
    over all $\pi\in \mathcal{P}(\R^d\times \R^d)$ whose left and right marginals are $\mu$ and $\nu$ respectively.

    We call any convex function $\phi$ such that $\nabla \phi$ is an optimal map from $\mu$ to $\nu$, a \emph{Brenier potential from $\mu$ to $\nu$}.
\end{Defi}

As mentioned above, when $\mu$ is absolutely continuous there always exists a Brenier potential from $\mu$ to $\nu$ by \cite[Theorem 1.3]{Brenier91}; additionally the optimal map is $\mu$-a.e. uniquely determined, and under some conditions on the connectedness of $\supp \mu$ (satisfied if $\supp \mu$ is convex), the Brenier potential itself is unique up to translation by a constant. Additionally, by \cite[Proposition 4.2 and Theorem 4.4]{McCann97} if $\mu$, $\nu$ are absolutely continuous with densities $f$ and $g$ respectively, for $\mu$-a.e. $x$ it holds that
\begin{align}\label{eqn: MA eqn}
    g(\nabla \phi(x))\det D^2\phi(x)=f(x),
\end{align}
where $D^2\phi(x)$ is the Hessian in the sense of Aleksandrov (see \cite[(35)]{McCann97}). If $\phi$ is twice differentiable, the above holds everywhere with $D^2\phi$ the usual Hessian.

One of the first known quantitative stability results is due to Ambrosio and presented by Gigli in \cite{Gigli2011HolderContCurve}, which showed Hölder continuity with respect to the $2$-Monge--Kantorovich distance of a curve of optimal transport maps when they are sufficiently regular. We also mention the early work by Loeper in \cite{loeper2005regularity} that frames his results in terms of stability of Brenier's polar factorization applied to periodic maps. To state more recent results, suppose 
$$
f_0,f_1:\Omega \longrightarrow\mathbb R,\quad g_0,g_1:\Omega_*\longrightarrow\mathbb R,
$$
where $f_i$, $g_i$ are the density functions of absolutely continuous source and target measures in $\mathcal{P}_2(\R^d)$ respectively, and let $\phi_i$ be Brenier potentials between $f_i \Leb^d$ and  $g_i \Leb^d$ for  ${i=0,1}$. We will refer to the density function of an absolutely continuous measure as a \emph{probability density}, and by an abuse of notation, we will write the density to sometimes denote the measure itself, which will be clear from context. 
Later, Berman showed in \cite{Berman2021StabilityOT} inequalities of the form
\begin{equation}\label{eq:Berman_ineqs}
  \left\Vert \nabla\phi_{1}-\nabla\phi_{0}\right\Vert _{L^{2}(\Omega)}\leq C\mk_1(f_0,f_1)^{\frac{1}{2}},\qquad\left\Vert \nabla\phi_{1}-\nabla\phi_{0}\right\Vert _{L^{2}(\Omega)}\leq C\left\Vert f_1-f_0\right\Vert _{L^{2}(\Omega)},
\end{equation}
in the case $g_0=g_1$, with convex domains and when the map $\nabla\phi_{0}$ is regular. He also proved a version of the inequality  for rough measures, only assuming $g$ to be bounded above and below. However, this came at the cost of a dimension dependent exponent decaying like $2^{-d}$. Shortly after, M{\'e}rigot, Delalande, and Chazal in \cite{MerigotDelaChazal2020StabOT}, showed
\begin{equation*}
  \left\Vert \nabla\phi_{1}-\nabla\phi_{0}\right\Vert _{L^{2}(\Omega)}\leq C\mk_1(g_0,g_1)^{\frac{2}{15}},\qquad\left\Vert \nabla\phi_{1}-\nabla\phi_{0}\right\Vert _{L^{2}(\Omega)}\leq C\left\Vert g_0-g_1\right\Vert _{L^{1}(\Omega)}^{\frac{1}{5}},
\end{equation*}
when $f_0=f_1$ are the uniform measure on a bounded, convex domain, but no assumption beyond bounded support is made on the target measures $g_0$, $g_1$. This result was later improved by Delalande and M{\'e}rigot in \cite{MerigotDela2023StabOt} to
\begin{equation*}
  \left\Vert \nabla\phi_{1}-\nabla\phi_{0}\right\Vert _{L^{2}(\rho)}\leq C_{\rho,m_{p}}\mk_1(g_0,g_1)^{\frac{1}{6}},\qquad\left\Vert \phi_{1}-\phi_{0}\right\Vert _{L^{2}(\rho)}\leq C_{\rho,m_{p}}\mk_1(g_0, g_1)^{\frac{1}{2}},
\end{equation*}
where $f_0=f_1=\rho$ is a density bounded above and below on a bounded, convex set and $g_1$, $g_0$ are only assumed to have $p$-th moments bounded by $m_{p}$ for $p\geq4$. Later, the same inequalities were proven by Letrouit and M{\'e}rigot in \cite{LetroMeri2024gluingOT} only requiring the support of $\rho$ to be a John domain, thus relaxing the convexity assumption. Subsequently, the third author together with Letrouit and M{\'e}rigot  generalized this result to manifolds in  \cite{KitaLetroMeri2025StabManifolds}.

Our first result can be seen as an improvement of the previous estimates, particularly Berman's (\ref{eq:Berman_ineqs}), when the densities are non-degenerate and Hölder continuous.
\begin{theorem}\label{thm:d2_d2}
  Let $\Omega$, $\Omega_*\subset \mathbb{R}^d$ be bounded, uniformly convex sets with boundaries of class $\mathcal C^{3,\alpha}$ and $\mathcal{C}^{2, \alpha}$ respectively, for $\alpha$. Let $f_0$, $f_1\in\mathcal{C}^{0,\alpha}(\overline{\Omega})$ and $g_0$, $g_1\in\mathcal{C}^{0,\alpha}(\overline{\Omega}_*)$ be probability densities with $f_i$, $g_i\geq a>0$ on $\Omega$ and $\Omega_*$ respectively. Then, if $\phi_i$ and $\pi_i$ are Brenier potentials and optimal plans between $f_i$ and $g_i$ respectively,
  \begin{equation}\label{eq:L2_d2_bound}
    \left\Vert \nabla\phi_{1}-\nabla\phi_{0}\right\Vert _{L^{2}(\Omega)}
    \leq C\left[\mk_2(f_0,f_1)+\mk_2(g_0,g_1)\right],
  \end{equation}
  and
  \begin{equation}\label{eq:boundplans}
    \mk_2(\pi_0,\pi_1)\leq C\left[\mk_2(f_0,f_1)+\mk_2(g_0,g_1)\right],
  \end{equation}
  where $C$ depends on $\Omega$, $\Omega_*$, $a$, $\alpha$, $\left\Vert f_i\right\Vert _{\mathcal{C}^{0,\alpha}(\overline{\Omega})}$, and $\left\Vert g_i\right\Vert _{\mathcal{C}^{0,\alpha}(\overline{\Omega}_*)}$.
\end{theorem}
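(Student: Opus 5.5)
We will interpolate the data along $\mk_2$-geodesics and linearize the Monge--Ampère equation along the interpolation. The abstract describes exactly this strategy, and the whole difficulty is to make the resulting estimates uniform in the interpolation parameter.

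\textbf{Interpolation and uniform regularity.} Let $f_t$ be the displacement interpolation between $f_0$ and $f_1$, and $g_t$ the one between $g_0$ and $g_1$. Displacement interpolation is the natural choice, because the velocity fields $v_t$, $w_t$ solving $\partial_t f_t+\div(f_tv_t)=0$ and $\partial_t g_t+\div(g_tw_t)=0$ then satisfy
\begin{align*}
\int_\Omega|v_t|^2f_t=\mk_2(f_0,f_1)^2,\qquad \int_{\Omega_*}|w_t|^2g_t=\mk_2(g_0,g_1)^2 .
\end{align*}
The drawback is that the intermediate densities need not be uniformly $\mathcal C^{0,\alpha}$ and bounded below. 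Since $\Omega$ and $\Omega_*$ are convex, the densities stay supported in $\overline\Omega$ and $\overline\Omega_*$. The lower and upper bounds persist by Caffarelli's regularity applied to the maps $f_0\to f_1$ and $g_0\to g_1$: these maps are $\mathcal C^{1,\alpha}$ up to the boundary, so $f_t$ is the pushforward under $(1-t)\Id+tT$, whose Jacobian is bounded above and below, and is therefore $\mathcal C^{0,\alpha}$ with controlled norms.

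For each $t$, let $\phi_t$ be the Brenier potential from $f_t$ to $g_t$, normalized by $\int\phi_t f_t=0$. By Caffarelli's global estimates (uniformly convex $\mathcal C^{2,\alpha}$ domains, Hölder densities), $\|\phi_t\|_{\mathcal C^{2,\alpha}(\overline\Omega)}\le C$ and $D^2\phi_t\ge c\,\Id$, uniformly in $t$. The $\mathcal C^{3,\alpha}$ boundary of $\Omega$ is presumably needed to justify differentiability in $t$ of the oblique boundary value problem; if this is delicate, we regularize the data, prove the estimate, and pass to the limit.

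\textbf{Linearized equation.} Write $\dot\phi=\partial_t\phi_t$. Differentiating the divergence form of the transport constraint, namely $(\nabla\phi_t)_\sharp f_t=g_t$ tested against $\zeta\in\mathcal C^1(\overline\Omega_*)$, gives for every such $\zeta$
\begin{align*}
\int_\Omega \langle\nabla\zeta(\nabla\phi_t),\nabla\dot\phi\rangle f_t+\int_\Omega \langle\nabla\zeta(\nabla\phi_t),v_t\rangle f_t=\int_{\Omega_*}\langle \nabla\zeta,w_t\rangle g_t .
\end{align*}
We choose $\zeta=\dot\phi\circ(\nabla\phi_t)^{-1}=\dot\phi\circ\nabla\phi_t^*$. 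Then
\begin{align*}
\nabla\zeta(\nabla\phi_t)=(D^2\phi_t)^{-1}\nabla\dot\phi ,
\end{align*}
and the first term equals $\int\langle (D^2\phi_t)^{-1}\nabla\dot\phi,\nabla\dot\phi\rangle f_t$, which is $\ge c\|\nabla\dot\phi\|_{L^2}^2$ by the uniform $\mathcal C^2$ bound on $\phi_t$ and the lower bound $f_t\ge a'$. The other two terms are controlled by Cauchy--Schwarz:
\begin{align*}
C\|\nabla\dot\phi\|_{L^2}\Bigl(\|v_t\|_{L^2(f_t)}+\|w_t\|_{L^2(g_t)}\Bigr),
\end{align*}
where the right-hand term uses a change of variables with bounded Jacobian. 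Dividing through,
\begin{align*}
\|\nabla\dot\phi_t\|_{L^2}\le C\bigl(\mk_2(f_0,f_1)+\mk_2(g_0,g_1)\bigr).
\end{align*}
Integrating in $t$ (Minkowski) gives \eqref{eq:L2_d2_bound}.

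\textbf{The plan estimate \eqref{eq:boundplans}.} Set $\pi_i=(\Id,\nabla\phi_i)_\sharp f_i$, let $S$ be the optimal map from $f_0$ to $f_1$, and use the coupling $x\mapsto (x,\nabla\phi_0(x);Sx,\nabla\phi_1(Sx))$. Its cost is
\begin{align*}
\mk_2(f_0,f_1)^2+\|\nabla\phi_0-\nabla\phi_1\circ S\|^2_{L^2(f_0)} .
\end{align*}
We split $\nabla\phi_0-\nabla\phi_1\circ S=(\nabla\phi_0-\nabla\phi_1)+(\nabla\phi_1-\nabla\phi_1\circ S)$. The first piece is bounded by \eqref{eq:L2_d2_bound}, and the second by $\|D^2\phi_1\|_\infty\,\mk_2(f_0,f_1)$.

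\textbf{Main obstacle.} The hardest step is establishing that $t\mapsto\phi_t$ is differentiable with enough regularity (e.g.\ $\dot\phi\in H^1$, boundary terms vanishing) to justify the linearization and the choice of test function. This requires uniform $\mathcal C^{2,\alpha}$ estimates together with the implicit function theorem for the second boundary value problem; low regularity of $v_t$ and $w_t$ should be circumvented via the weak formulation and approximation.
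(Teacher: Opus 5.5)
Your argument for \eqref{eq:L2_d2_bound} is the paper's argument. It uses displacement interpolation and uniform $\mathcal C^{2,\alpha}$ control of $\phi_t$; the paper proves this control in quantitative form itself (Lemma~\ref{lem: global C2 estimate}) rather than citing it. It then tests the weak linearized equation with $\dot\phi$ and concludes with Benamou--Brenier, Minkowski, and smoothing of the data. The step you flag as the main obstacle is exactly what Theorem~\ref{thm:implicit_fun} and Corollary~\ref{cor:implicit_fun} supply.

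Differentiating $\int_\Omega\zeta(\nabla\phi_t)f_t=\int_{\Omega_*}\zeta g_t$ is a clean way to reach the identity of Lemma~\ref{lem:technical_d2_stab}, but there is a slip. Since $\partial_tf_t=-\div(f_tv_t)$, the middle term should be $\int_\Omega\inner{D^2\phi_t\nabla\zeta(\nabla\phi_t)}{v_t}f_t$. For your choice of $\zeta$ this equals $\int_\Omega\inner{\nabla\dot\phi}{v_t}f_t$, so the Cauchy--Schwarz bound is unaffected.

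Your guess about the $\mathcal C^{3,\alpha}$ boundary is slightly off. In the paper it is not used for the oblique problem itself. It is used to put the potential of $T$ in $\mathcal C^3(\overline\Omega)$, so that $\partial_tf_t$, which involves third derivatives of that potential, is continuous in $(t,x)$ as Corollary~\ref{cor:implicit_fun} requires.

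For \eqref{eq:boundplans} you take a genuinely different and shorter route. The paper shows that $\pi_t=(\Id\times\nabla\phi_t)_\sharp f_t$ solves a continuity equation driven by $(\nabla u_t,\nabla\xi_t+D^2\phi_t\nabla u_t)$ (Lemma~\ref{lem: vf for plans}). It then applies Benamou--Brenier a second time, which again relies on time differentiability for smooth data and on a limit using lower semicontinuity of $\mk_2$.

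Your coupling $(\Id,\nabla\phi_0,S,\nabla\phi_1\circ S)_\sharp f_0$ indeed has marginals $\pi_0$ and $\pi_1$. Your splitting, combined with $f_0\le C$ and the Lipschitz bound $\|D^2\phi_1\|_\infty$ for $\nabla\phi_1$ along segments $[x,Sx]\subset\overline\Omega$ (by convexity), gives \eqref{eq:boundplans} directly from \eqref{eq:L2_d2_bound} at the H\"older level. It needs no dynamics on plans and no further approximation. The paper's version additionally describes how the optimal plans move along the interpolation. For the inequality itself, yours only needs the $\mathcal C^{1,1}$ bound at one endpoint.
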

Note that the setting of Theorem \ref{thm:d2_d2} rules out discrete measures. However, it is known that, if $f_1=f_0=\rho$ and $g_0$, $g_1$ are allowed to be discrete measures, then one \emph{cannot} obtain a bound of the form $\left\Vert \nabla\phi_{1}-\nabla\phi_{0}\right\Vert _{L^{2}(\Omega)}\leq C\mk_1(g_0,g_1)^{\alpha}$ for $\alpha>\frac{1}{2}$, which is currently conjectured as the optimal exponent (see \cite[Sec. 4]{Gigli2011HolderContCurve} and \cite[Lemma 5.2]{Delalande2022Thesis}). The bound on the optimal plans themselves \eqref{eq:boundplans} is also known to be false for non-smooth densities, see \cite{ford2025stabOT}. In this sense, the Lipschitz nature of \cref{eq:L2_d2_bound,eq:boundplans} seems to be a special feature of regular measures.

We also provide a stability estimate on the Brenier potentials when the continuity assumption is removed. However, we consider the stronger $L^2$ norm as a right hand side.
\begin{theorem}\label{thm:L2_L2_stab_potential}
    Let $\Omega$, $\Omega_*\subset \mathbb{R}^d$ be bounded, uniformly convex sets with boundaries of class $\mathcal C^{2,\alpha}$.
    Let $f_0$, $f_1\in L^{2}(\Omega)$, $g_0,g_1\in L^{2}(\Omega_{*})$ be probability densities with $f_i$, $g_i\geq a$, $f_i\leq A$ and let $\phi_{i}$ be Brenier potentials from $f_i$ to $g_i$, normalized so that
  \begin{equation*}
    \int_{\Omega}e^{-\phi_{i}(x)}\,dx=1.
  \end{equation*}
  Then we have 
  \begin{equation}\label{eq:L2_L2}
    \left\Vert \phi_{0}-\phi_{1}\right\Vert _{L^{2}(\Omega)}\leq C\left(\left\Vert f_0-f_1\right\Vert _{L^{2}(\Omega)}+\left\Vert g_0-g_1\right\Vert _{L^{2}(\Omega_{*})}\right)
  \end{equation}
  and
  \begin{align}\label{eqn: L2 grad bound}
      \lVert \nabla \phi_0-\nabla\phi_1\rVert_{L^2(\Omega)}
      \leq C\left(\left\lVert f_0-f_1\right\rVert^{\frac{1}{3}}_{L^{2}(\Omega)}+\left\lVert g_0-g_1\right\rVert^{\frac{1}{3}} _{L^{2}(\Omega_{*})}\right)
  \end{align}
  for some constant $C$ depending on $d$, $\textnormal{diam}(\Omega)$, $\textnormal{diam}(\Omega_*)$, $a$, and $A$.
\end{theorem}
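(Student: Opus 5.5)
We interpolate linearly between the data and linearize the Monge--Amp\`ere equation in divergence form along the path. Set $f_t=(1-t)f_0+tf_1$ and $g_t=(1-t)g_0+tg_1$. These remain probability densities with $a\le f_t\le A$ and $g_t\ge a$. Let $\phi_t$ be the Brenier potential from $f_t$ to $g_t$, normalized by $\int_\Omega e^{-\phi_t}=1$. We will not rely on Caffarelli regularity, since the data are only $L^2$. Instead we approximate $f_i,g_i$ in $L^2$ by smooth densities that keep the same bounds, prove the estimate with constants depending only on $d$, the diameters, $a$ and $A$, and pass to the limit. The limit step uses the qualitative stability of Brenier potentials: uniform convergence on compacts, and $L^2$ convergence of gradients.

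\textbf{Step 1: the linearized equation.} For smooth data, differentiating $(\nabla\phi_t)_\sharp f_t=g_t$ in $t$ gives, for $\dot\phi=\partial_t\phi_t$ and every test function $\zeta$ on $\Omega_*$,
\begin{equation*}
\int_\Omega \nabla\zeta(\nabla\phi_t)\cdot\nabla\dot\phi\, f_t\,dx=\int_{\Omega_*}\zeta\,(g_1-g_0)\,dy-\int_\Omega \zeta(\nabla\phi_t)(f_1-f_0)\,dx .
\end{equation*}
The normalization contributes the constraint $\int_\Omega\dot\phi\,e^{-\phi_t}=0$. Change variables $y=\nabla\phi_t(x)$ and write $\psi=\dot\phi\circ\nabla\phi_t^*$. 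Then $\nabla\dot\phi=D^2\phi_t\,\nabla\psi(\nabla\phi_t)$, and the relation becomes a weighted elliptic problem on $\Omega_*$: the left side is $\int_{\Omega_*}\nabla\zeta\cdot \nabla\psi\,\rho\,dy$ with $\rho=g_t$, up to the metric $D^2\phi_t$. To avoid using any bounds on $D^2\phi_t$, we test on the source side instead, with $\zeta(\nabla\phi_t)=\dot\phi$. This gives
\begin{equation*}
\int_\Omega \langle (D^2\phi_t)^{-1}\nabla\dot\phi,\nabla\dot\phi\rangle f_t\,dx=\int_{\Omega_*}\dot\phi(\nabla\phi_t^*)(g_1-g_0)-\int_\Omega\dot\phi\,(f_1-f_0).
\end{equation*}

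\textbf{Step 2: an energy estimate.} Both sides must be controlled by $\|\dot\phi\|_{L^2(\Omega)}$, using only $a$, $A$ and the geometry. The right side is bounded by Cauchy--Schwarz. For the target term we use $\|\dot\phi\circ\nabla\phi_t^*\|_{L^2(\Omega_*)}^2\le a^{-1}\int \dot\phi^2(\nabla\phi^*_t)g_t=a^{-1}\int\dot\phi^2 f_t\le Aa^{-1}\|\dot\phi\|^2_{L^2}$. For the left side we need a Poincar\'e-type inequality
\begin{equation*}
\int_\Omega \dot\phi^2\le C\int_\Omega\langle (D^2\phi_t)^{-1}\nabla\dot\phi,\nabla\dot\phi\rangle f_t,
\end{equation*}
valid under the constraint $\int\dot\phi e^{-\phi_t}=0$. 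This is the main obstacle, since $(D^2\phi_t)^{-1}$ is not bounded below. We would handle it with the identity $\int\langle (D^2\phi)^{-1}\nabla u,\nabla u\rangle\ge(\int|\nabla u|)^2/\int\tr D^2\phi$ in a weighted form. An alternative is to pass to the target side, where the form is $\int_{\Omega_*}\langle D^2\phi_t^*\nabla\psi,\nabla\psi\rangle g_t$. Then a Brascamp--Lieb inequality for a measure built from $\phi^*_t$ gives the bound. The $e^{-\phi_t}$ normalization is chosen precisely so that the Brascamp--Lieb / Pr\'ekopa-type variance inequality $\mathrm{Var}_{e^{-\phi}}(u)\le\int\langle(D^2\phi)^{-1}\nabla u,\nabla u\rangle e^{-\phi}$ applies directly. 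The weight $e^{-\phi_t}$ is comparable to a constant because $\phi_t$ has bounded oscillation: $|\nabla\phi_t|\le\mathrm{diam}$ bounds. Comparing $e^{-\phi_t}$ with $f_t$ then costs only factors of $a$, $A$ and $e^{\mathrm{osc}\,\phi_t}$. Combining the two sides yields $\|\dot\phi\|_{L^2}\le C(\|f_1-f_0\|_{L^2}+\|g_1-g_0\|_{L^2})$. Integrating in $t\in[0,1]$ gives \eqref{eq:L2_L2}.

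\textbf{Step 3: the gradient bound.} For \eqref{eqn: L2 grad bound} we interpolate between the $L^2$ bound on $\phi_0-\phi_1$ and the uniform bounds on $\nabla\phi_i$. Since $\phi_i$ are convex and Lipschitz on a convex domain, a standard estimate for differences of convex functions gives $\|\nabla u\|_{L^2}^2\lesssim\|u\|_{L^\infty}$ for $u=\phi_0-\phi_1$. This follows by integrating by parts, $\int|\nabla u|^2=-\int u\,\Delta u+\text{bdry}$, and using $|\Delta u|\le\Delta\phi_0+\Delta\phi_1$, which has bounded total mass. The Lipschitz bound converts the $L^2$ norm to an $L^\infty$ norm: $\|u\|_\infty\lesssim\|u\|_{L^2}^{2/(d+2)}$. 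Since the stated exponent is $1/3$ independent of $d$, we would instead bound directly $\int|\nabla u|^2\le\|u\|_{L^\infty}\cdot C$ and replace the $L^\infty$ norm by a boundary-layer argument that gives $\|u\|_{L^2}^{2/3}$. If that fails, we would settle for the weaker dimension-dependent exponent.
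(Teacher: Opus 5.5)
Your argument for \eqref{eq:L2_L2} is essentially the paper's. It uses linear interpolation, the weak linearized equation tested with $\dot\phi$, and the Brascamp--Lieb inequality for the log-concave measure $e^{-\phi_t}\,dx$ on $\Omega$, where the normalization gives $\int_\Omega \dot\phi\, e^{-\phi_t}\,dx=0$. It then compares $e^{-\phi_t}$ with $f_t$ via the Lipschitz bound on $\phi_t$, applies Minkowski's inequality in $t$, and approximates.

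You should commit to that route. Your first suggestion only controls $\|\nabla\dot\phi\|_{L^1(\Omega)}$, hence $\dot\phi$ in $L^{d/(d-1)}$. That does not close against the $L^2$ right-hand side when $d\ge 3$.

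Also, Step 1 takes for granted that $t\mapsto\phi_t$ is differentiable for the smoothed data. That is precisely Theorem~\ref{thm:implicit_fun}, whose proof uses global $\mathcal C^{2,\alpha}$ regularity of the smoothed problems. So regularity theory does enter your argument, just not in the constants.

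The genuine gap is Step 3, which as written does not prove \eqref{eqn: L2 grad bound}. Any route through $\|u\|_{L^\infty(\Omega)}$, with $u=\phi_0-\phi_1$, loses the dimension. The bound $\|u\|_{L^\infty}\lesssim\|u\|_{L^2}^{2/(d+2)}$ is sharp even for differences of convex functions with $|\nabla\phi_i|\le R$: take $u=\max(M,R|x-x_0|)-R|x-x_0|$. So the unspecified ``boundary-layer argument'' would have to bypass the sup norm entirely. Falling back on the exponent $1/(d+2)$ proves a weaker statement.

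The paper simply invokes \cite[Proposition 4.1]{MerigotDela2023StabOt}, i.e.\ $\|\nabla u\|_{L^2(\Omega)}\le C\|u\|_{L^2(\Omega)}^{1/3}$ for such $u$ on a convex set. The idea behind the dimension-free exponent is to slice:
\begin{itemize}
\item On a chord $I$ of $\Omega$ in a direction $e$, $v=u|_I$ is a difference of convex functions with slopes in $[-R,R]$. So $v'$ has total variation at most $4R$, and integrating by parts gives $\int_I|v'|^2\le 8R\|v\|_{L^\infty(I)}$.
\item In one dimension the Lipschitz bound costs only a cube: $\|v\|_{L^\infty(I)}^3\le 16R\|v\|_{L^2(I)}^2$, unless $|I|<\|v\|_{L^\infty(I)}/(4R)$. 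In that case the trivial bound $\int_I|v'|^2\le 4R^2|I|$ is of the same order.
\item Hence $\int_I|v'|^2\le CR^{4/3}\|v\|_{L^2(I)}^{2/3}$. Integrating over the chords with H\"older's inequality (exponents $3$ and $3/2$) gives $\|\partial_e u\|_{L^2(\Omega)}^2\le C\|u\|_{L^2(\Omega)}^{2/3}$.
\item Summing over coordinate directions and inserting \eqref{eq:L2_L2} yields \eqref{eqn: L2 grad bound}.
\end{itemize}
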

The main feature that differentiates (\ref{eq:L2_L2}) from other estimates in the literature is again its Lipschitz nature. However, it is again limited to non-degenerate densities. We believe that it may be possible to improve the previous estimate to the bound
\begin{equation}\label{eq:dream_Linfty_bound}
\left\Vert \phi_{0}-\phi_{1}\right\Vert _{L^{\infty}(\Omega)}\leq C\left(\left\Vert f_0-f_1\right\Vert _{L^{2}(\Omega)}+\left\Vert g_0-g_1\right\Vert _{L^{2}(\Omega_{*})}\right)
\end{equation}
while retaining the same hypothesis. This would, in fact, hold if it were possible to prove a Sobolev inequality of the form
\begin{equation*}
  \left(\int_\Omega\lvert f\rvert^{2^{*}}\right)^{\frac{1}{2^{*}}}\leq C_{\textnormal{sob}}
  \left[
  \left(\int_\Omega\inner{(D^2\phi)^{-1}\nabla f}{\nabla f}\right)^\frac{1}{2} +
  \left(\int_\Omega |f|^{2}\right)^\frac{1}{2}\right],
  \qquad \forall f \in \mathcal C^1(\overline \Omega),
\end{equation*}
for a constant which depends only on $\Omega$, the upper and lower bounds of $\det D^2\phi$. Then (\ref{eq:dream_Linfty_bound}) would follow from Lemma \ref{lem:moser_estimate}, which would be proven for $A=(D^2\phi)^{-1}$ through a Moser iteration. Note that such a Sobolev inequality was proven in \cite{tian_wang2008MA_sob_ineq} under the boundary condition $ f\vert_{\partial\Omega}\equiv 0$. Finally, we note that (\ref{eqn: L2 grad bound}) is a direct consequence of (\ref{eq:L2_L2}) and Proposition 4.1 of \cite{MerigotDela2023StabOt}.

Our third stability result is in a uniform metric.

\begin{theorem}\label{thm:unif_stab}
  Let $\Omega$, $\Omega_*\subset \mathbb{R}^d$ be bounded uniformly convex sets with boundaries of class $\mathcal C^{2,\beta}$. Also suppose $f_0$, $f_1\in\mathcal{C}^{0,\beta}(\overline{\Omega})$ and $g_0$, $g_1\in\mathcal{C}^{0,\beta}(\overline{\Omega}_*)$ are probability densities with $f_i$, $g_i\geq a>0$. Finally, let $\phi_i$ be Brenier potentials from $f_i$ to $g_i$ with
  \begin{align*}
      \int_{\Omega}\phi_i(x)\,dx=0.
  \end{align*}
  Then for any $\alpha<\beta$, we have
  \begin{equation*}
    \left\Vert \phi_{1}-\phi_{0}\right\Vert _{\mathcal C^{1,\alpha}(\overline \Omega)}
    \leq C\left(\left\Vert f_1-f_0\right\Vert _{L^{p}(\Omega)}+\left\Vert g_1-g_0\right\Vert _{L^{p}(\Omega_{*})}\right),
  \end{equation*}
  where $p = \frac{d}{1-\alpha}$ and $C$ depends on $\Omega$, $\Omega_*$, $a$, $\alpha$, $\beta$, $\left\Vert f_i\right\Vert _{\mathcal{C}^{0,\beta}(\overline{\Omega})}$, and $\left\Vert g_i\right\Vert _{\mathcal{C}^{0,\beta}(\overline{\Omega}_*)}$.
\end{theorem}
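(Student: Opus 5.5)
We will argue by interpolation and linearization, the strategy announced in the abstract. Set $f_t=(1-t)f_0+tf_1$ and $g_t=(1-t)g_0+tg_1$ for $t\in[0,1]$. These are probability densities in $\mathcal C^{0,\beta}$, bounded below by $a$, with Hölder norms controlled by those of the endpoints. Let $\phi_t$ be the Brenier potential from $f_t$ to $g_t$, normalized by $\int_\Omega\phi_t=0$.

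By Caffarelli's global regularity theory (and its extensions to uniformly convex $\mathcal C^{2,\beta}$ domains) we have $\phi_t\in\mathcal C^{2,\beta}(\overline\Omega)$ uniformly in $t$, with $\lambda I\le D^2\phi_t\le\Lambda I$. The second boundary value problem also gives $\nabla\phi_t(\Omega)=\Omega_*$. Our first step is to show that $t\mapsto\phi_t$ is differentiable, say in $\mathcal C^{1,\alpha}$. Differentiating $g_t(\nabla\phi_t)\det D^2\phi_t=f_t$ formally, with $\psi=\dot\phi_t$, gives the linearized equation
\begin{equation*}
  \div\bigl(f_t\,(D^2\phi_t)^{-1}\nabla\psi\bigr)=\dot f_t-\bigl(\dot g_t\circ\nabla\phi_t\bigr)\det D^2\phi_t .
\end{equation*}
This is the divergence form obtained by pulling back the continuity equation $\div(g_t\,\dot{\nabla\phi_t}\circ\nabla\phi_t^{-1})=-\dot g_t$ on $\Omega_*$. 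The matching oblique/conormal boundary condition comes from differentiating $h_*(\nabla\phi_t)=0$, where $h_*$ is a defining function of $\Omega_*$:
\begin{equation*}
  \inner{\nabla h_*(\nabla\phi_t)}{\nabla\psi}=0\quad\text{on }\partial\Omega .
\end{equation*}
Because $f_t\,(D^2\phi_t)^{-1}\nu_\Omega$ is parallel to $\nabla h_*(\nabla\phi_t)$, this is exactly the conormal (Neumann) condition for the divergence-form operator.

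To justify the derivative rigorously, we would apply the same PDE to the difference quotients $(\phi_{t+h}-\phi_t)/h$. Writing the difference of Monge--Amp\`ere equations as a linear equation with coefficients $\int_0^1\mathrm{cof}\bigl(sD^2\phi_{t+h}+(1-s)D^2\phi_t\bigr)\,ds$ produces a uniformly elliptic divergence-form equation with Hölder coefficients and a conormal boundary condition. The estimate below then applies directly to $\phi_1-\phi_0$, so in fact one can skip the derivative and work with $t=0,1$ only. Concretely, we write
\begin{equation*}
  g_1(\nabla\phi_1)\det D^2\phi_1-g_0(\nabla\phi_0)\det D^2\phi_0=f_1-f_0 ,
\end{equation*}
expand it as $\div(A\nabla u)+\bigl(\text{terms linear in }\nabla u\bigr)=(f_1-f_0)-(g_1-g_0)\circ\nabla\phi_0\,\det D^2\phi_0$ with $u=\phi_1-\phi_0$, and use the divergence identity $\div(\mathrm{cof}\,D^2\phi\,\nabla v)=\mathrm{cof}\,D^2\phi:D^2v$.

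The second step is the estimate for this Neumann problem with right-hand side $F\in L^p$, where $p=d/(1-\alpha)>d$, normalized by $\int u=0$. The right-hand side has zero mean by mass balance, since $\int(f_1-f_0)=0$ and a change of variables shows the $g$ term integrates to $\int(g_1-g_0)=0$. Because the source is not in divergence form, we use the $W^{2,p}$ (or $W^{1,p}$ via duality) theory for conormal problems with $\mathcal C^{0,\beta}$ coefficients on a $\mathcal C^{1,\beta}$/$\mathcal C^{2,\beta}$ domain. This gives $\|u\|_{W^{2,p}}\le C\|F\|_{L^p}$, with uniqueness modulo constants ensured by the Fredholm alternative. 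Morrey's embedding $W^{2,p}\subset\mathcal C^{1,1-d/p}=\mathcal C^{1,\alpha}$ then yields the claim. Finally, we bound $\|(g_1-g_0)\circ\nabla\phi_0\det D^2\phi_0\|_{L^p(\Omega)}\le C\|g_1-g_0\|_{L^p(\Omega_*)}$ by changing variables under the bi-Lipschitz map $\nabla\phi_0$.

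The main obstacle is the lower-order terms and the boundary condition. In the difference, the term $g_1(\nabla\phi_1)-g_1(\nabla\phi_0)$ only makes sense as something linear in $\nabla u$ if $g_1$ is Lipschitz, which it is not. We therefore prefer the pulled-back, divergence-form formulation along the interpolation. There $\dot g_t$ enters only through a source term and no derivative of $g$ appears. Since $\nabla\phi_t$ maps $\Omega$ onto $\Omega_*$ for every $t$, the boundary condition is the exact conormal one. The a priori estimate is then applied to $\psi_t$ uniformly in $t$ and integrated over $t\in[0,1]$, which is why we need differentiability of $t\mapsto\phi_t$ in $\mathcal C^{1,\alpha}$. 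We would obtain it from the implicit function theorem for the Monge--Amp\`ere second boundary value operator between $\mathcal C^{2,\alpha}$-type spaces, using the invertibility of the linearization we have just described. We expect the loss from $\beta$ to $\alpha<\beta$ to enter exactly here, both in the implicit function theorem step and in the uniform Schauder control of the coefficients $f_t(D^2\phi_t)^{-1}\in\mathcal C^{0,\beta}$.
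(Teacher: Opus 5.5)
\textbf{Where the proposal fails.} Your overall architecture matches the paper's: linearly interpolate the densities, use the divergence-form linearization with conormal boundary condition, bound $\xi_t=\partial_t\phi_t$ in $\mathcal C^{1,\alpha}$ by the $L^{d/(1-\alpha)}$ norm of the right-hand side uniformly in $t$, and integrate over $t$. The gap is in the elliptic estimate you use for that bound.

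The operator $\div(f_t(D^2\phi_t)^{-1}\nabla\psi)$ has only $\mathcal C^{0,\alpha}$ coefficients. It also cannot be rewritten in nondivergence form: by \eqref{eq:magic_eq} its drift is $\det D^2\phi_t\,\inner{\nabla g_t(\nabla\phi_t)}{\nabla\psi}$, which requires $\nabla g_t$. So there is no $W^{2,p}$ estimate, and $\psi_t$ need not lie in $W^{2,p}$ at all.

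A one-dimensional example shows this. Take $d=1$ and $g_0=g_1=g$. Then $g(\phi_t')\,\xi_t'=F_1-F_0$, where $F_i$ are the distribution functions of $f_i$. Hence $\xi_t'=(F_1-F_0)/g(\phi_t')$ is exactly as rough as $g$ wherever $F_1\neq F_0$. If $g$ is a nowhere-differentiable H\"older density, then $\xi_t\notin W^{2,1}$.

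Your fallback to $W^{1,p}$ estimates only yields $\mathcal C^{0,1-d/p}$, not $\mathcal C^{1,\alpha}$. The tool that works, with exactly the scaling you want, is the $\mathcal C^{1,\alpha}$ estimate for divergence-form conormal problems with $\mathcal C^{0,\alpha}$ coefficients and $L^{d/(1-\alpha)}$ source (Lemma~\ref{lem:Schauder_estimate}). Combine it with a De Giorgi--Moser $L^\infty$ bound under the normalization $\int_\Omega\xi_t=0$ (Lemma~\ref{lem:moser_estimate}) to remove the $\Vert\xi_t\Vert_{L^\infty}$ term uniformly in $t$. This is precisely what the paper does.

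\textbf{Second gap: differentiability in $t$.} At $\mathcal C^{0,\beta}$ regularity you cannot get differentiability of $t\mapsto\phi_t$ from the implicit function theorem, however much H\"older exponent you give up. When $g_t$ is merely H\"older, the map $\phi\mapsto\log g_t(\nabla\phi)$ is not Fr\'echet differentiable from $\mathcal C^{2,\alpha}$ into $\mathcal C^{0,\alpha}$, since its would-be derivative involves $\nabla g_t$. It does not help that the linearized operator makes sense in divergence form: the theorem needs the nonlinear map itself to be $\mathcal C^1$.

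The paper runs the implicit function theorem only for regular data (Theorem~\ref{thm:implicit_fun} requires $f_t\in\mathcal C^{1,\alpha}$ and $g_t\in\mathcal C^{m,\alpha}$). For H\"older data it mollifies in $(t,x)$ and passes to the limit using uniform $\mathcal C^{1,\beta}$ bounds on $\xi_t^\varepsilon$ (Corollary~\ref{cor:implicit_fun}). Equivalently, you could prove the estimate for smooth densities, observe that the constant depends only on $a$ and the $\mathcal C^{0,\beta}$ norms, and approximate.

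That last step relies on a uniform bound $\Vert\phi_t\Vert_{\mathcal C^{2,\alpha}(\overline\Omega)}\le C$ depending only on those quantities. This is a genuine quantitative input, not a consequence of qualitative regularity. The paper proves it by a compactness argument, with the loss from $\beta$ to $\alpha$, in Lemma~\ref{lem: global C2 estimate}.
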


Quantitative stability results that uniformly estimate the map or the potential are very scarce. At the moment, we are only aware of two other results in this direction, \cite[Theorem 5.1]{KitaWarren2012OTSphere} and \cite[Proposition 5.1]{JeongKitagawa25}. Both results provide an interior $L^\infty$ estimate of the optimal map around the identity (i.e. setting $f_0=g_0$) in terms of the $2$-Monge--Kantorovich distance with an exponent that decays algebraically in the dimension, but for measures supported on a codimension one surface. Theorem \ref{thm:unif_stab} differs from the results of \cite{KitaWarren2012OTSphere, JeongKitagawa25} in that (i) we can estimate perturbations around maps other than the identity, (ii) the bounds do not have exponents that degenerate with the dimension, and (iii) the estimate is global. On the other hand, our results apply to absolutely continuous measures, rather than ones supported on a submanifold.

\begin{remark}\label{rmk:sharpness}
    If the assumptions from Theorem \ref{thm:unif_stab} hold for some $0<\alpha <1$, then we obtain the same estimate for any $0<\overline{\alpha} <\alpha$, showing that for any $p>d$ we have
    \begin{equation}\label{eq:Linfty_Lp_stab}
        \left\Vert \nabla\phi_{1}-\nabla\phi_{0}\right\Vert _{L^{\infty}(\Omega)}\leq C\left(\left\Vert f_0-f_1\right\Vert _{L^{p}(\Omega)}+\left\Vert g_0-g_1\right\Vert _{L^{p}(\Omega_{*})}\right),
    \end{equation}
    where $C$ depends on $\Omega$, $\Omega_*,a,p,\left\Vert f_i\right\Vert _{\mathcal{C}^{0,\alpha}(\overline{\Omega})}$, and $\left\Vert g_i\right\Vert _{\mathcal{C}^{0,\alpha}(\overline{\Omega}_*)}$. Of the hypotheses of Theorem \ref{thm:unif_stab}, the most fundamental one for (\ref{eq:Linfty_Lp_stab}) to hold seems to be the lower bound $g_i\geq a$. Indeed, if $g_i$ is allowed to vanish at even a single point, it can be shown there is no pair of constants $0<\eta<1$ and $C>0$ such that
    \begin{equation*}
        \left\Vert \nabla\phi_{1}-\nabla\phi_{0}\right\Vert _{L^{\infty}(\Omega)}\leq C\left(\left\Vert f_0-f_1\right\Vert _{L^{p}(\Omega)}^\eta+\left\Vert g_0-g_1\right\Vert _{L^{p}(\Omega_{*})}^\eta\right).
    \end{equation*}
    It is not clear to us which of the other assumptions from Theorem \ref{thm:unif_stab} may be relaxed. In fact, we note that in dimension $1$, given any densities $f_0$, $f_1\in\mathcal{P}(I)$, $g_0,g_1\in\mathcal{P}(J)$ on intervals $I,J\subset \mathbb R$ with $g_0,g_1\geq\varepsilon$, we have
    \begin{equation}\label{eq:1D_Linfty_L1}
        \left\Vert \phi_{1}'-\phi_{0}'\right\Vert _{L^{\infty}(I)}\leq\frac{1}{\varepsilon}\left(\left\Vert f_1-f_0\right\Vert _{L^{1}(I)}+\left\Vert g_1-g_0\right\Vert _{L^{1}(J)}\right).
    \end{equation}
    See the end of Section \ref{sec:uniform_stability_potentials} for a proof of these statements.
\end{remark}

The main technical ingredient in the proofs of \cref{thm:d2_d2,thm:L2_L2_stab_potential,thm:unif_stab} is to show the regularity of the Brenier potential along smooth perturbations of the source and target measure, see \cref{thm:implicit_fun} for a precise statement. As a further application of this technical tool, we prove an explicit formula for the second variation of the $2$-Monge--Kantorovich distance. This formula is of independent interest, as it quantifies explicitly the linear convexity of the $2$-Monge--Kantorovich distance.
\begin{theorem}\label{thm:2var_d2}
  Let $\Omega$, $\Omega_*\subset \mathbb{R}^d$ be bounded uniformly convex open sets with $\mathcal C^{3,\alpha}$ boundary. Consider probability densities $f$, $h\in \mathcal C^{0,\alpha}(\overline \Omega)$, $g$, $k\in \mathcal C^{0,\alpha}(\overline \Omega_*)$ where $f$, $g\geq a>0$ and
  \begin{equation*}
    \int_{\Omega} h(x)f(x)\,dx = \int_{\Omega_*} k(y)g(y)\,dy =0,
  \end{equation*}
  so that $f_t := f(1+th)$, $g_t :=g(1+tk)$ are probability densities for $t$ small enough. If $\phi_t$ is the Brenier potential from $f_t$ to $g_t$ with integral zero for each $t$, then
  \begin{equation}\label{eq:second_var_thm}
    \frac{1}{2}\left.\frac{d^{2}}{dt^{2}}\right|_{t=0}\left[\mk_2(f_{t},g_{t})^2\right]
    =\int_{\Omega}\inner{\left(D^{2}\phi_0(x)\right)^{-1}\nabla\xi(x)}{\nabla\xi(x)} f(x)\,dx,
  \end{equation}
  where $\xi=\left.\partial_{t}\right|_{t=0}\phi_t$ is weak solution of
  \begin{equation*}
    \begin{cases}
      -\div\left[f(D^{2}\phi_0)^{-1}\nabla\xi\right]=\left(k\circ \nabla\phi_0-h\right)f, & \textnormal{in }\Omega,\\
       \inner{(D^{2}\phi_0)^{-1}\nabla\xi}{\normal} =0, & \textnormal{on }\partial\Omega,
    \end{cases}
  \end{equation*}
  and $\normal$ denotes the unit outer normal of $\Omega$.
\end{theorem}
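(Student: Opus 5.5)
We compute the second derivative through Kantorovich duality. The plan is to write $\mk_2^2$ as an explicit function of $t$, differentiate it once using the Brenier potentials, and differentiate a second time using the linearized Monge--Amp\`ere equation. Throughout we use \cref{thm:implicit_fun}, which gives that $t\mapsto\phi_t$ is differentiable (indeed $\mathcal C^1$) in a space like $\mathcal C^{2,\alpha}(\overline\Omega)$ near $t=0$. The derivative $\xi=\partial_t\phi_t|_{t=0}$ solves the stated Neumann problem.

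\textbf{Step 1: first variation.} Write
\[
\tfrac12\mk_2(f_t,g_t)^2=\int_\Omega\tfrac{|x|^2}{2}f_t+\int_{\Omega_*}\tfrac{|y|^2}{2}g_t-\int_\Omega\phi_t f_t-\int_{\Omega_*}\phi_t^*g_t .
\]
Because $(\phi_t,\phi_t^*)$ optimizes the dual problem, the terms coming from $\partial_t\phi_t$ cancel. Explicitly, $\int\dot\phi_t f_t-\int \dot\phi_t\circ\nabla\phi_t^*\, g_t=0$ since $(\nabla\phi_t)_\sharp f_t=g_t$, and $\partial_t\phi_t^*(y)=-\dot\phi_t(\nabla\phi_t^*(y))$. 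Using $\dot f_t=hf$ and $\dot g_t=kg$, this gives
\[
\tfrac12\tfrac{d}{dt}\mk_2^2=\int_\Omega\big(\tfrac{|x|^2}{2}-\phi_t\big)hf+\int_{\Omega_*}\big(\tfrac{|y|^2}{2}-\phi_t^*\big)kg .
\]
This step is rigorous given the $\mathcal C^1$-in-$t$ regularity of $\phi_t$ and $\phi_t^*$, where the latter follows from the inverse function theorem.

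\textbf{Step 2: second variation.} Differentiate once more at $t=0$:
\[
\tfrac12\tfrac{d^2}{dt^2}\Big|_{0}\mk_2^2=-\int_\Omega\xi hf+\int_{\Omega_*}\xi(\nabla\phi_0^*(y))k(y)g(y)\,dy=\int_\Omega \xi\,(k\circ\nabla\phi_0-h)f\,dx,
\]
where the last equality pushes forward by $\nabla\phi_0$. Now test the weak formulation of the equation for $\xi$ against $\xi$ itself. The Neumann condition kills the boundary term, and we obtain exactly $\int_\Omega\langle (D^2\phi_0)^{-1}\nabla\xi,\nabla\xi\rangle f$.

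\textbf{Step 3: the equation for $\xi$.} If \cref{thm:implicit_fun} does not already state that $\xi$ solves the Neumann problem, derive it here. Write the constraint in divergence form as $(\nabla\phi_t)_\sharp f_t=g_t$: for every test function $\zeta$,
\[
\int_\Omega\zeta(\nabla\phi_t)f_t=\int_{\Omega_*}\zeta g_t .
\]
Differentiate in $t$, substitute $\zeta=\eta\circ\nabla\phi_0^*$, and use $D^2\phi_0^*\circ\nabla\phi_0=(D^2\phi_0)^{-1}$. This yields
\[
\int_\Omega\langle\nabla\eta,(D^2\phi_0)^{-1}\nabla\xi\rangle f=\int_\Omega \eta\,(k\circ\nabla\phi_0-h)f
\]
for all $\eta\in\mathcal C^1(\overline\Omega)$, which is the weak Neumann problem. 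The normalization $\int_\Omega\phi_t=0$ fixes $\xi$ uniquely. The choice of additive constant does not affect Step 2, since $\int(k\circ\nabla\phi_0-h)f=0$.

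\textbf{Main obstacle.} The main difficulty is justifying the differentiations. We need differentiability in $t$ of $\phi_t$ and of $\phi_t^*$ in a norm strong enough to pass derivatives under the integrals, with uniform bounds. This is precisely what \cref{thm:implicit_fun} provides, and it is where the $\mathcal C^{3,\alpha}$ boundary regularity and the Hölder densities enter. The remaining steps are algebraic once that result is in hand.
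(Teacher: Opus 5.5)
Your argument is correct, and it takes a different and shorter route than the paper. The paper's Lemma~\ref{lem:d2_twice_diff} already contains your Step~1 and remarks that the resulting expression is differentiable, but the proof of the theorem never differentiates that formula. Instead, for smooth data it uses Theorem~\ref{thm:implicit_fun} with $m=2$, so that $t\mapsto\phi_t$ is $\mathcal C^2$ into $\mathcal C^{2,\beta}(\overline\Omega)$. It then expands the second symmetric difference quotient of the primal cost $\int_\Omega|\nabla\phi_t-x|^2f_t$. The resulting term containing $\frac{d^2}{dt^2}\nabla\phi_t$ is removed by differentiating the push-forward constraint twice with test function $\frac12|y|^2-\psi$. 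H\"older data are then handled by mollification, using uniform $\mathcal C^{2,\beta}$ bounds on $\phi^\varepsilon_t$ and $\mathcal C^{1,\beta}$ bounds on $\xi^\varepsilon_t$ inside an integral representation of the difference quotient. Your route has three steps: differentiate the first-variation formula once more, use $\partial_t\phi_t^*=-\xi_t\circ\nabla\phi_t^*$ and $(\nabla\phi_0)_\sharp f=g$ to reach $\int_\Omega\xi\,(k\circ\nabla\phi_0-h)f$, then test the weak equation with $\xi$. It needs only first-order time regularity, never involves $\frac{d^2}{dt^2}\phi_t$, and needs no approximation step. The paper's computation stays with the Monge formulation and the map $\nabla\phi_t$, at the cost of second-order time regularity and a limiting argument.

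One correction concerns how you justify the regularity. Theorem~\ref{thm:implicit_fun} does not apply here. It requires $t\mapsto f_t\in\mathcal C^{1,\alpha}(\overline\Omega)$ and $t\mapsto g_t\in\mathcal C^{m,\alpha}(\overline\Omega_*)$, whereas $f,h,g,k$ are only $\mathcal C^{0,\alpha}$. For such data $\xi$ is in general only $\mathcal C^{1,\beta}$: already in one dimension $g(\phi_0')\,\xi'$ is a primitive, so $\xi'$ is no smoother than $g$. Hence $t\mapsto\phi_t$ is not differentiable into $\mathcal C^{2,\alpha}$. The right input is Corollary~\ref{cor:implicit_fun}, after rescaling a small interval around $t=0$ to $[0,1]$. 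It applies because $\partial_tf_t=hf$ and $\partial_tg_t=kg$ are continuous, and it gives differentiability of $\phi_t$ together with the weak equation for $\xi_t$, which makes your Step~3 redundant. Add the uniform bound $\|\xi_t\|_{\mathcal C^{1,\beta}(\overline\Omega)}\le C$, which follows from Lemmas~\ref{lem: global C2 estimate}, \ref{lem:moser_estimate} and~\ref{lem:Schauder_estimate}. This bound justifies dominated convergence in Steps~1--2 and the envelope formula for $\partial_t\phi_t^*$, which is everything your argument uses.
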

To our knowledge, the only instance in the literature of the second variation of the Monge--Kantorovich distance is found in \cite{otto_vill2000generalization}, where some formal calculations are made by perturbing around the identity mapping. In this work, we extend and justify these calculations to general optimal transport maps between regular densities.

\subsection{Main idea}\label{sec:intuition}
Given two pairs of source and target probability densities $(f_0,g_0)$ and $(f_1,g_1)$, we fix a suitable interpolation curve $\{(f_t,g_t)\}_{0\leq t\leq 1}$ and consider an associated curve $\{\phi_t\}_{0\leq t\leq 1}$ consisting of Brenier potentials from $f_t$ to $g_t$,
which uniquely determines $\phi_t$ up to constants in our setting. To control the difference between the two potentials, we can express it as an integral of its derivative along the interpolation path, i.e. 
$$\phi_1-\phi_0=\int_0^1 \partial_t \phi_t \, dt.$$ 
To obtain a useful estimate we characterize the derivative $\partial_t \phi_t$ via linearization of the Monge--Amp{\`e}re equation, which is the content of the following theorem. This idea of controlling the potentials using the linearized equation is already present in the earlier work of Loeper \cite{loeper2005regularity}.
\begin{theorem}\label{thm:implicit_fun}
  Let $\Omega$, $\Omega_*\subset \mathbb R^d$ be uniformly convex with boundaries of class $\mathcal C^{1,1}$ and $\mathcal C^{m+1,\alpha}$ respectively for some $m\geq 1$. Also suppose $t\in[0,1]\mapsto f_t\in \mathcal C^{1,\alpha}(\overline \Omega)$, $t\in [0,1]\mapsto g_t\in \mathcal C^{m,\alpha}(\overline \Omega_*)$ are $\mathcal C^m$ as maps between Banach spaces, where $f_t$ and $g_t$ are probability densities on $\Omega$ and $\Omega_*$ respectively, and  there exists $a>0$ such that $f_t$, $g_t\geq a$. Then if $\phi_t$ is the Brenier potential from $f_t$ to $g_t$ for each $t$ satisfying $\int_\Omega \phi_t= 0$, for each $0<\beta <\alpha$, we have that the mapping
  \begin{equation*}
    t\mapsto \phi_t\in \mathcal C^{2,\beta}(\overline \Omega)
  \end{equation*}
  is $\mathcal C^m$ as a curve in $\mathcal C^{2,\beta}(\overline \Omega)$. Moreover, for each $t$, $\xi_t = \partial_t \phi_t$ is the unique solution to
  \begin{equation}\label{eq:boundary_val_prob_implicit}
    \begin{cases}
      \displaystyle{-\div\left[f_t(D^{2}\phi_t)^{-1}\nabla\xi_t\right]=\left(\frac{\partial_{t}g_t\left(\nabla\phi_t\right)}{g_t\left(\nabla\phi_t\right)}-\frac{\partial_{t}f_t}{f_t}\right)f_t,} &
      \textnormal{in }\Omega \\ 
      {\displaystyle \inner{(D^{2}\phi_t)^{-1}\nabla\xi_t}{\normal} =0,} & \textnormal{on }\partial\Omega\\
      \int_{\Omega}\xi_t=0.
    \end{cases}
  \end{equation}
\end{theorem}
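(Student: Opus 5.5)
The plan is to apply the implicit function theorem in Banach spaces to the second boundary value problem for the Monge--Amp\`ere equation, written in a form where the boundary condition becomes an equation. Fix $0<\beta<\alpha$ and set
\[
X=\Big\{\phi\in\mathcal C^{2,\beta}(\overline\Omega): \textstyle\int_\Omega\phi=0,\ D^2\phi>0\Big\},
\]
an open subset of a closed subspace of $\mathcal C^{2,\beta}(\overline\Omega)$. Let $h_*$ be a $\mathcal C^{m+1,\alpha}$ uniformly concave defining function of $\Omega_*$, with $\Omega_*=\{h_*>0\}$ and $|\nabla h_*|=1$ on $\partial\Omega_*$. Define
\[
F(t,\phi)=\Big(\log\det D^2\phi-\log f_t+\log g_t(\nabla\phi),\ h_*(\nabla\phi)|_{\partial\Omega}\Big),
\]
with values in $\mathcal C^{0,\beta}(\overline\Omega)\times\mathcal C^{1,\beta}(\partial\Omega)$. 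Each $g_t$ is extended to a $\mathcal C^{m,\alpha}$ function on a neighborhood of $\overline\Omega_*$, and $t\mapsto g_t$ is $\mathcal C^m$. The Brenier potential $\phi_t$ satisfies $F(t,\phi_t)=0$. Here I invoke Caffarelli's global regularity for uniformly convex $\mathcal C^{2,\alpha}$ domains together with the $\mathcal C^{1,\alpha}$/$\mathcal C^{m,\alpha}$ regularity of the densities: this gives $\phi_t\in\mathcal C^{2,\alpha'}(\overline\Omega)$ with uniform bounds, and since $\alpha'$ may need upgrading, I will use $\beta<\alpha$ together with Urbas-type estimates. It also gives uniform convexity $D^2\phi_t\ge\lambda I$ and $\nabla\phi_t(\partial\Omega)=\partial\Omega_*$, so the second component vanishes.

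The loss from $\alpha$ to $\beta$ is needed for a specific reason. Composition $\phi\mapsto g_t(\nabla\phi)$ is $\mathcal C^m$ from $\mathcal C^{2,\beta}$ to $\mathcal C^{0,\beta}$ only when $g_t$ has extra smoothness; the $\mathcal C^{m,\alpha}$ regularity of $g_t$ supplies this, and the strict inequality $\beta<\alpha$ handles the continuity of $t\mapsto g_t\circ\nabla\phi$. Likewise $h_*\in\mathcal C^{m+1,\alpha}$ makes the boundary component $\mathcal C^m$. Hence $F$ is $\mathcal C^m$.

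The next step is to compute the linearization at $\phi_t$:
\[
D_\phi F(t,\phi_t)\xi=\Big(\tr\big((D^2\phi_t)^{-1}D^2\xi\big)+\inner{\tfrac{\nabla g_t(\nabla\phi_t)}{g_t(\nabla\phi_t)}}{\nabla\xi},\ \inner{\nabla h_*(\nabla\phi_t)}{\nabla\xi}\Big).
\]
Using $\det D^2\phi_t=f_t/g_t(\nabla\phi_t)$ and the identity $\div(\det D^2\phi\,(D^2\phi)^{-1})=0$, the interior operator times $f_t$ equals $\div[f_t(D^2\phi_t)^{-1}\nabla\xi]$. This step is the reason $f_t\in\mathcal C^{1,\alpha}$ is assumed. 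On $\partial\Omega$ we have $\nabla h_*(\nabla\phi_t)=-\nu_*(\nabla\phi_t)$, and the standard fact that $\nu_*(\nabla\phi_t)$ is parallel to $(D^2\phi_t)^{-1}\normal$ (obtained by differentiating $h_*(\nabla\phi_t)=0$ tangentially) turns the boundary component into a positive multiple of the conormal condition $\inner{(D^2\phi_t)^{-1}\nabla\xi}{\normal}$. This oblique condition is strictly oblique, since $\inner{\nu_*(\nabla\phi_t)}{\normal}>0$ by uniform convexity.

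\textbf{Main obstacle: the Fredholm argument.} I must show $D_\phi F$ is an isomorphism from the tangent space $\{\xi\in\mathcal C^{2,\beta}:\int\xi=0\}$ onto the subspace of pairs $(u,w)$ satisfying the compatibility condition $\int_\Omega f_tu=\int_{\partial\Omega}(\text{weight})\,w$ obtained from the divergence theorem. The range of $F$ itself lies in a corresponding codimension-one submanifold, because mass balance $\int f_t=\int g_t=1$ is built into the problem. To handle this cleanly I would replace the first component by $\log\det D^2\phi-\log f_t+\log g_t(\nabla\phi)-c$ with an extra unknown constant $c\in\mathbb R$, which is zero at solutions. The kernel is then handled as follows. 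Schauder theory for strictly oblique problems (Gilbarg--Trudinger, Lieberman) gives Fredholm index zero. Any kernel element solves the homogeneous divergence-form problem, and testing with $\xi$ gives $\int f_t\langle (D^2\phi_t)^{-1}\nabla\xi,\nabla\xi\rangle=0$, so $\xi$ is constant, hence zero; the constant $c$ is forced to vanish by integration. The implicit function theorem then yields a $\mathcal C^m$ local curve of solutions, which coincides with $\phi_t$ by uniqueness of Brenier potentials normalized to mean zero. Since $[0,1]$ is covered by finitely many such neighborhoods, the regularity is global.

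Finally, differentiating $F(t,\phi_t)=0$ in $t$ gives $D_\phi F\,\xi_t=-\partial_tF$, which in divergence form is exactly \eqref{eq:boundary_val_prob_implicit}. The $t$-derivative of the boundary term vanishes because $h_*$ is independent of $t$. Uniqueness of $\xi_t$ follows from the energy argument above combined with $\int\xi_t=0$.
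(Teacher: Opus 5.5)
Your proposal is correct and follows essentially the paper's proof. The shared steps are:
\begin{itemize}
\item the same functional $\big(\log\det D^2\phi+\log g_t(\nabla\phi)-\log f_t,\ \omega_*(\nabla\phi)\big)$ on $\mathcal C^{2,\beta}$ (your $h_*=-\omega_*$), with the loss from $\alpha$ to $\beta$ needed for the composition regularity;
\item the same divergence-form and conormal rewriting of the linearization;
\item the same identification of the implicit-function curve with $\phi_t$ through mass balance and uniqueness of mean-zero Brenier potentials.
\end{itemize}
The only variation is cosmetic. You adjoin a free scalar unknown $c$ and get surjectivity from Fredholm index zero plus a trivial kernel. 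The paper instead subtracts a $\phi$-dependent constant $C_\phi$ to land in the fixed compatibility space $\mathcal Y_0$, and gets existence from Lax--Milgram plus elliptic regularity. In both versions the constant is removed at the end by the same change of variables, $\int_\Omega g_t(\nabla\tilde\phi_t)\det D^2\tilde\phi_t=\int_{\Omega_*}g_t=1$.
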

Using an approximation argument, we obtain the following corollary, which shows differentiability in $t$ of $\phi_t$ without the need to verify that $t\mapsto f_t$, $t\mapsto g_t$ are smooth as curves in a Banach space.
\begin{corollary}\label{cor:implicit_fun}
    Consider $\Omega$, $\Omega_*\subset \mathbb R^d$ uniformly convex with boundaries of class $\mathcal C^{1,1}$, and $\mathcal C^{2,\alpha}$ respectively. Assume $f_t$, $g_t$ satisfy:
    \begin{enumerate}
        \item There is $a>0$ such that $f_t,g_t\geq a$.
        \item $f_t\in \mathcal C^{0,\alpha}(\overline \Omega)$, $g_t\in \mathcal C^{0,\alpha}(\overline \Omega_*)$ with $\Vert f_t\Vert_{\mathcal C^{0,\alpha}(\overline \Omega)}$, $\Vert g_t\Vert_{\mathcal C^{0,\alpha}(\overline \Omega_*)}$ uniformly bounded in $t$.
        \item\label{cond: unif cont partial_t} Both $f_t$ and $g_t$ are differentiable in $t$ for $0<t<1$, and $\partial_tf_t$, $\partial_tg_t$ are continuous with respect to $(t,x)\in [0,1]\times \overline \Omega$, $(t,x)\in [0,1]\times \overline \Omega_*$.
    \end{enumerate}
    If we let $\phi_t$ be the Brenier potential from $f_t$ to $g_t$ with $\int_\Omega\phi_t\,dx=0$, then $\phi_t$ is differentiable in $t$ for $t\in(0,1)$, and $\xi_t :=\partial_t\phi_t$ is a weak solution of (\ref{eq:boundary_val_prob_implicit}) for each $t\in (0, 1)$. That is, for every $\theta\in H^1(\Omega)$,
    \begin{align}\label{eqn: weak PDE nonsmooth}
        \int_\Omega \inner{(D^{2}\phi_t)^{-1}\nabla\xi_t}{\nabla \theta}dx=
        \int_\Omega\left(\frac{\partial_{t}g_t\left(\nabla\phi_t\right)}{g_t\left(\nabla\phi_t\right)}-\frac{\partial_{t}f_t}{f_t}\right)\theta f_tdx.
    \end{align}
\end{corollary}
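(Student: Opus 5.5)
We prove \cref{cor:implicit_fun} by approximation: mollify the data, apply \cref{thm:implicit_fun} to the smooth approximations, and pass to the limit in an integrated form of the derivative identity.

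\textbf{Step 1: smooth approximations.} Extend $f_t$, $g_t$ continuously in $(t,x)$ and convolve in $x$ with a mollifier $\rho_\varepsilon$. Add a constant to restore unit mass and renormalize, obtaining $f^\varepsilon_t$, $g^\varepsilon_t$. Then $t\mapsto f^\varepsilon_t\in\mathcal C^{1,\alpha}(\overline\Omega)$ and $t\mapsto g^\varepsilon_t\in\mathcal C^{1,\alpha}(\overline\Omega_*)$ are $\mathcal C^1$ as Banach-valued maps. This is because $\partial_t f_t$ is jointly continuous, so $\partial_t f_t^\varepsilon=\rho_\varepsilon*\partial_t f_t$ is continuous into every $\mathcal C^k$. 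The functions $f^\varepsilon_t$, $g^\varepsilon_t$ are bounded below by $a/2$ for small $\varepsilon$. Their $\mathcal C^{0,\alpha}$ norms are bounded uniformly in $(t,\varepsilon)$. Moreover $\partial_t f^\varepsilon_t\to\partial_t f_t$ uniformly in $(t,x)$, and similarly for $g$. Apply \cref{thm:implicit_fun} with $m=1$ to get potentials $\phi^\varepsilon_t$ that are $\mathcal C^1$ in $t$ with values in $\mathcal C^{2,\beta}$. Integrating in time gives
\begin{equation*}
  \phi^\varepsilon_t-\phi^\varepsilon_{s}=\int_s^t \xi^\varepsilon_\tau\,d\tau .
\end{equation*}
Here $\xi^\varepsilon_\tau$ solves \eqref{eq:boundary_val_prob_implicit} with the mollified data.

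\textbf{Step 2: uniform estimates.} By Caffarelli's global $\mathcal C^{2,\alpha}$ regularity for $\mathcal C^{2,\alpha}$ uniformly convex domains with $\mathcal C^{0,\alpha}$ densities bounded below, $\|\phi^\varepsilon_t\|_{\mathcal C^{2,\alpha}}$ is bounded uniformly. Hence $D^2\phi^\varepsilon_t$ is uniformly elliptic with $\mathcal C^{0,\alpha}$ coefficients, since $\det D^2\phi^\varepsilon_t$ is bounded above and below. The oblique-type conormal problem for $\xi^\varepsilon_\tau$ then has the following features.
\begin{itemize}
  \item Its coefficient matrix $f(D^2\phi)^{-1}$ is in $\mathcal C^{0,\alpha}$.
  \item Its right-hand side is uniformly bounded.
  \item Its compatibility condition holds automatically.
\end{itemize}
Poincar\'e--Wirtinger together with Lax--Milgram gives uniform $H^1$ bounds on $\xi^\varepsilon_\tau$. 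Schauder or De Giorgi--Nash estimates for conormal problems then give uniform $\mathcal C^{1,\gamma}$ bounds, or at least $\mathcal C^{0,\gamma}$ bounds.

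\textbf{Step 3: passage to the limit.} Stability of optimal transport and uniqueness under the normalization, combined with Arzelà--Ascoli, give $\phi^\varepsilon_t\to\phi_t$ in $\mathcal C^{2,\beta}$ for each fixed $t$. Next, fix $\tau$ and define $\xi_\tau$ as the unique zero-mean $H^1$ weak solution of \eqref{eqn: weak PDE nonsmooth}.

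We claim $\xi^\varepsilon_\tau\to\xi_\tau$ uniformly. Test the difference of the two weak equations with $\xi^\varepsilon_\tau-\xi_\tau$. The coefficients converge uniformly, and the right-hand sides converge uniformly because $\partial_t g^\varepsilon(\nabla\phi^\varepsilon)\to\partial_t g(\nabla\phi)$. This yields $H^1$ convergence. Uniform convergence then follows from the equicontinuity established in Step 2.

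Passing to the limit in the time-integrated identity of Step 1 gives $\phi_t-\phi_s=\int_s^t\xi_\tau\,d\tau$.

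\textbf{Step 4: continuity in time and conclusion.} The same argument applied along a sequence $\tau_n\to\tau$ shows that $\tau\mapsto\xi_\tau$ is continuous in $L^\infty$. Here we use the joint continuity of $\partial_t f$ and $\partial_t g$, and that $\phi_\tau$ is continuous in $\tau$ in $\mathcal C^2$, again by stability plus uniqueness. Continuity of the integrand means the fundamental theorem of calculus applies, so $\partial_t\phi_t=\xi_t$ pointwise, and $\xi_t$ is the weak solution of \eqref{eqn: weak PDE nonsmooth}.

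\textbf{Main obstacle.} The crux is Step 2: we need uniform global $\mathcal C^{2,\alpha}$ control on the potentials, with only a $\mathcal C^{1,1}$ source boundary, together with equicontinuity of solutions to the conormal problem. If Caffarelli's estimate is not available under the $\mathcal C^{1,1}$ assumption on $\partial\Omega$, I would fall back on $H^1$ compactness of $\xi^\varepsilon$ plus $L^2$-in-time convergence. This weaker route still yields differentiability almost everywhere in $t$, and continuity of $\tau\mapsto\xi_\tau$ in $H^1$ then upgrades it to every $t$.
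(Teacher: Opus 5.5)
Your proposal is correct and follows the same route as the paper. Both mollify the data, apply Theorem~\ref{thm:implicit_fun} with $m=1$, and combine the uniform bound of Lemma~\ref{lem: global C2 estimate} (which the paper also invokes despite only assuming $\partial\Omega\in\mathcal C^{1,1}$) with Lemmas~\ref{lem:moser_estimate} and~\ref{lem:Schauder_estimate} to pass to the limit in the weak formulation.

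The differences are technical:
\begin{itemize}
\item You mollify only in $x$, whereas the paper mollifies in $(t,x)$ after a Stein extension. Your ``continuous extension'' does need to be such a linear bounded extension, so that it commutes with $\partial_t$ and preserves the $\mathcal C^{0,\alpha}$ bounds.
\item You identify the limit of $\xi^\varepsilon_\tau$ by uniqueness plus an energy estimate. The paper instead uses $\mathcal C^{1,\beta}$ compactness and the distributional identity $\int_0^1\!\int_\Omega\xi^\varepsilon\theta=-\int_0^1\!\int_\Omega\phi^\varepsilon\partial_t\theta$.
\item Your Step~4 spells out the upgrade from a distributional to a pointwise time derivative, which the paper passes over quickly.
\end{itemize}
Note that the limiting equation you obtain carries the factor $f_t$ in the left-hand integrand, as in \eqref{eq:boundary_val_prob_implicit}. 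The displayed weak form in the statement omits this factor, so do not define $\xi_\tau$ from that display literally.
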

Note that elliptic regularity implies $\partial_t\phi_t\in \mathcal C^{1,\alpha}(\overline \Omega)$.
\cref{thm:implicit_fun} is derived from an application of an appropriate version of the inverse function theorem, a similar statement with $m=1$ can be found in \cite{gonzalez2024_linearization_MA}. The main difference is that we have rewritten the linearized equation in divergence form \eqref{eq:boundary_val_prob_implicit}, which simplifies the proof of the result and allows us to obtain stability estimates for densities which are merely Hölder continuous. The strategy of the proof is inspired by that of \cite{gonzalez2024_linearization_MA}, and we give the proof in Section~\ref{sec:time_reg_ifc}.

The stability results follow by estimating $\partial_t \phi_t$ using \cref{thm:implicit_fun} and integrating in the $t$ variable. The difference in the hypothesis and conclusions of \cref{thm:d2_d2,thm:unif_stab,thm:L2_L2_stab_potential} come from the different choices of interpolation paths $\{(f_t,g_t)\}_{0\leq t\leq 1}$. In \cref{thm:unif_stab,thm:L2_L2_stab_potential} we use a linear interpolation in $L^{p}$, while in \cref{thm:d2_d2} we use $2$-Monge--Kantorovich geodesics. The proofs of \cref{thm:d2_d2,thm:unif_stab} rely on the following estimate.
\begin{lemma}
\label{lem: global C2 estimate}
Let $\Omega$, $\Omega_{*}\subset\mathbb{R}^{d}$ with boundaries of class $\mathcal{C}^{2,\beta}$ for some $\beta\in (0, 1]$. Consider densities $f\in \mathcal C^{0,\beta}(\overline\Omega)$, $g\in \mathcal C^{0,\beta}(\overline \Omega_*)$ with $f$, $g\geq a$ for some constant $a>0$ on $\Omega$ and $\Omega_{*}$ respectively, and let $\phi$ be the convex solution of
  \begin{equation}\label{eq:MA_appendix}
    \det D^{2}\phi=\frac{f(x)}{g\big(\nabla\phi(x)\big)}
    ,\qquad\nabla\phi(\Omega)=\Omega_{*}
    ,\qquad \int_\Omega \phi = 0.
  \end{equation}
  Then for any $\alpha<\beta$, it holds $\Vert \phi\Vert _{\mathcal{C}^{2,\alpha}(\overline{\Omega})}\leq C$, where $C$ depends on $\Omega$, $\Omega_{*}$, $\alpha$, $\beta$, $a$, $\left\Vert f\right\Vert _{\mathcal{C}^{0,\beta}(\overline{\Omega})}$ and $\left\Vert g\right\Vert _{\mathcal{C}^{0,\beta}(\overline{\Omega}_{*})}$.
\end{lemma}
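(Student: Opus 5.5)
This is a global Schauder estimate for the second boundary value problem, and the plan is to assemble it from Caffarelli's boundary regularity theory. Implicitly $\Omega,\Omega_*$ are uniformly convex, as everywhere else in the paper. I will proceed in three stages: strict convexity and $\mathcal C^{1,\gamma}$ regularity up to the boundary, then a global $\mathcal C^{2,\gamma}$ bound for some small $\gamma$, then a bootstrap to any $\alpha<\beta$. Throughout, the aim is that every constant depends only on the listed quantities.

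\textbf{Step 1 (qualitative regularity).} Since $\Omega,\Omega_*$ are convex and $a\le f,g\le \Vert f\Vert_\infty,\Vert g\Vert_\infty$, the Brenier potential is an Aleksandrov (hence Caffarelli--Brenier) solution of \eqref{eq:MA_appendix}. Caffarelli's results \cite{caffarelli1992boundary_reg_convex_potential} give strict convexity of $\phi$ and $\phi\in\mathcal C^{1,\gamma}(\overline\Omega)$. The constants depend only on the domains and on the bounds of the densities. The normalization $\int_\Omega\phi=0$ together with $\nabla\phi(\Omega)=\Omega_*$ bounded gives $\Vert\phi\Vert_{L^\infty}\le C\,\mathrm{diam}(\Omega)\,\mathrm{diam}(\Omega_*)$. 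This settles the $\mathcal C^0$ and $\mathcal C^1$ parts.

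\textbf{Step 2 (global $\mathcal C^{2,\gamma}$).} Under uniform convexity and $\mathcal C^{2,\beta}$ boundaries with Hölder densities, Caffarelli \cite{caffarelli1996boundary_reg_convex_potential} proves $\phi\in\mathcal C^{2,\gamma}(\overline\Omega)$ for some $\gamma>0$, with a quantitative bound. The key ingredients are:
\begin{enumerate}
\item the obliqueness of the boundary condition $h_*(\nabla\phi)=0$, where $h_*$ is a defining function of $\Omega_*$, i.e.\ $\inner{\nabla h_*(\nabla\phi)}{\normal}\ge c_0>0$ (Urbas \cite{Urbas97}, Delanoë \cite{Delanoe91});
\item uniform two-sided Hessian bounds $\lambda\Id\le D^2\phi\le\Lambda\Id$.
\end{enumerate}
I would cite these results in their quantitative form. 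Where needed, I would obtain quantitative dependence from the qualitative statements by a compactness argument: if the bound failed along a sequence of data with controlled norms, Arzelà--Ascoli together with the stability of Brenier maps would produce a limit problem, contradicting the uniform estimates. I expect this step to be the main obstacle, namely tracking that the $\mathcal C^{2,\gamma}$ constant depends only on the stated norms. For that reason I would lean on the a priori estimates of Urbas and Caffarelli rather than pure compactness.

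\textbf{Step 3 (bootstrap to $\alpha<\beta$).} With $D^2\phi$ bounded and uniformly elliptic, I take the logarithm of the equation: $\log\det D^2\phi=\log f-\log g(\nabla\phi)$. The right-hand side is $\mathcal C^{0,\beta'}$ with $\beta'=\min(\beta,\beta(1))=\beta$, since $\nabla\phi$ is Lipschitz. The boundary condition $h_*(\nabla\phi)=0$ is a uniformly oblique nonlinear condition with $\mathcal C^{1,\beta}$ dependence, because $\partial\Omega_*\in\mathcal C^{2,\beta}$ makes $h_*\in\mathcal C^{2,\beta}$. Fully nonlinear concave oblique-derivative Schauder theory (Lieberman--Trudinger), applied to a concave uniformly elliptic operator with $\mathcal C^{2,\gamma}$ solution, then gives $\phi\in\mathcal C^{2,\alpha}(\overline\Omega)$ for every $\alpha<\beta$, with the bound controlled by $\Vert\phi\Vert_{\mathcal C^{2,\gamma}}$ and the data. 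The loss from $\beta$ to $\alpha<\beta$ absorbs the composition $g\circ\nabla\phi$ and the interpolation between the $\mathcal C^{2,\gamma}$ bound and the higher norm. Iterating finitely many times, raising $\gamma$ toward $\beta$, finishes the proof.
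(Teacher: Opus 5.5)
Your Step 2 carries the entire content of the lemma, and as written it is not established.

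The ingredients you list do not apply here. The obliqueness and two-sided Hessian estimates of Urbas and Delano\"e are a priori estimates for smooth data, obtained by differentiating the equation. Their constants therefore involve derivatives of $f$ and $g$, not just their $\mathcal{C}^{0,\beta}$ norms. Moreover, the bound $\lambda\Id\le D^2\phi\le\Lambda\Id$ follows from the $\mathcal{C}^2$ bound you are trying to prove (cf.\ Remark~\ref{rmk:ellipticity_D2phi}), so listing it as an ingredient is circular.

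The global $\mathcal{C}^{2,\gamma}$ regularity for H\"older densities (Caffarelli, Chen--Liu--Wang) is used in the paper only as a qualitative statement. Asserting that it holds ``with a quantitative bound'' depending only on the listed norms amounts to assuming the lemma.

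Your fallback compactness argument does not close the gap. Suppose $\lVert\phi_n\rVert_{\mathcal{C}^{2,\gamma}}\to\infty$ along data bounded in $\mathcal{C}^{0,\beta}$. Arzel\`a--Ascoli and stability of Brenier maps give $\phi_n\to\phi$ in $\mathcal{C}^1$, where $\phi$ is the potential for the limit data and is $\mathcal{C}^{2,\gamma}$ qualitatively. Nothing here contradicts the blow-up of the norms of the $\phi_n$, so ``contradicting the uniform estimates'' presupposes the estimates you want.

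What is missing is \emph{continuity of the data-to-solution map into $\mathcal{C}^{2,\alpha}$}. With it, qualitative regularity plus compactness of the data set in a slightly weaker H\"older space gives a bound, since a continuous image of a compact set is bounded.

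The paper obtains this continuity via the implicit function theorem, but only after a reduction you would also need. The map $(g,\phi)\mapsto g(\nabla\phi)$ is not differentiable between H\"older spaces when $g$ is merely $\mathcal{C}^{0,\beta}$, so one cannot linearize in $g$. Instead:
\begin{enumerate}
\item Caffarelli's $\mathcal{C}^{1,\delta}$ estimate (your Step 1) gives $h:=f/g(\nabla\phi)\in\mathcal{C}^{0,\tilde\alpha}$ with controlled norm and lower bound.
\item $\phi$ is then the solution $\phi_h$ of the second boundary value problem with \emph{constant} target density.
\item For that problem, $h\mapsto\phi_h$ is $\mathcal{C}^1$ from $\mathcal{C}^{0,\alpha}_{>0}(\overline\Omega)$ into $\mathcal{C}^{2,\alpha}(\overline\Omega)$ by the implicit function theorem. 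The linearization $\tr(A[\phi]D^2\xi)$, with $A[\phi]$ the cofactor matrix and the oblique boundary condition, is inverted on mean-zero functions. This uses the divergence structure of the cofactor matrix, after correcting the compatibility condition by a constant.
\item Compactness of the admissible $h$ then yields a $\mathcal{C}^{2,\tilde\alpha-\varepsilon}$ bound.
\end{enumerate}

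The same device replaces your Step 3. Once $D^2\phi$ is bounded, $h\in\mathcal{C}^{0,\beta}$ with controlled norm, and one applies the solution map once more. This avoids having to invoke a fully nonlinear oblique-derivative Schauder theory with only H\"older dependence on $x$.
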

This will provide a quantification of the ellipticity of (\ref{eq:boundary_val_prob_implicit}); we provide the proof of this estimate in Appendix \ref{sec:compactness_ellipticity}.\footnote{The authors have recently learned of \cite{collins2025boundaryRegOT} which can be used to replace Lemma~\ref{lem: global C2 estimate} under weaker conditions on the densities and domains, however the presentation here is kept this way for the purpose of self-containedness.}

\begin{remark} \label{rmk:ellipticity_D2phi}
Recall it is standard that if a solution $\phi$ of~\eqref{eq:MA_appendix} has a bound on $\lVert \phi\rVert_{\mathcal{C}^2(\overline\Omega)}$, then it is actually uniformly convex. More specifically, first if $\Lambda:=\lVert D^{2}\phi\rVert _{L^{\infty}(\Omega)}$ then clearly $D^2\phi \leq \Lambda \Id$. On the other hand, if $0\leq\lambda_{1}(x),\ldots,\lambda_{d}(x)\leq\Lambda$ are the eigenvalues of $D^{2}\phi$ then
  \begin{align*}
    \lambda_{1}(x)\cdots\lambda_{d}(x)&=\det D^{2}\phi(x)=\frac{f(x)}{g\left(\nabla\phi(x)\right)}\\
    \implies&\lambda_{i}(x)=\frac{f(x)}{g\left(\nabla\phi(x)\right)}\prod_{j\neq i}\lambda_{i}(x)^{-1}\geq\frac{a}{\sup g}\Lambda^{-d+1}.
  \end{align*}
We shall use this fact freely in the sequel.
\end{remark}

Regarding Theorem \ref{thm:2var_d2}, once $\phi_t$ is known to be time differentiable, the expression (\ref{eq:second_var_thm}) can be obtained by the differentiation
$$
\left.\frac{d^2}{dt^2}\right|_{t=0} \int_\Omega|\nabla\phi_t(x)-x|^2f_t(x)\,dx.
$$

\textbf{Organization of the paper.} The proofs of the theorems have been divided into sections. Theorem \ref{thm:d2_d2} is proven in Section \ref{sec:stab_w2_metrics}, Theorem \ref{thm:L2_L2_stab_potential} is proven in Section \ref{sec:stab_L2}, Theorem \ref{thm:unif_stab} is proven in Section \ref{sec:uniform_stability_potentials}, Theorem \ref{thm:2var_d2} is proven in Section \ref{sec:second_variation}, and Theorem \ref{thm:implicit_fun} together with Corollary \ref{cor:implicit_fun} are proven in Section \ref{sec:time_reg_ifc}. Appendix \ref{sec:reg_comp} contains a simplified summary of the main results of \cite{llave1999_reg_comp}, which has been included for ease of reference and for the reader's convenience. Appendix \ref{sec:compactness_ellipticity} contains a compactness argument used to prove Lemma \ref{lem: global C2 estimate}, which is vital for the proof of \cref{thm:d2_d2,thm:unif_stab}.

\section{Proof of \cref{thm:d2_d2}}\label{sec:stab_w2_metrics}
For the remainder of the paper we fix domains $\Omega$ and $\Omega_*\subset \R^d$ and probability densities $f_0$, $f_1$ supported on $\overline{\Omega}$ and $g_0$, $g_1$ supported on $\overline{\Omega}_*$. In this section, we assume $\Omega$ and $\Omega_*$ are uniformly convex. For convex $\Omega_*$, we fix $\omega_*: \R^d\to \R$ to be a convex \emph{defining function} for $\Omega_*$, that is, a convex function such that
\begin{equation*}
  \Omega_* = \{y\in \mathbb R^d\mid\omega_*(y)<0\},\qquad |\nabla \omega_*(y)|\equiv 1\,\textnormal{ on }\partial \Omega_*;
\end{equation*}
in particular $\nabla \omega_*(y)=\normal_*(y)$ is the outer unit normal. Note if $\phi_i$ is a Brenier potential from $f_i$ to $g_i$ and  $\nabla \phi_i$ is continuous on $\partial \Omega$, the boundary condition
\begin{align}\label{eqn: MA bdry cond}
    \omega_*(\nabla \phi_i(x))=0
\end{align}
holds for all $x\in \partial \Omega$. Additionally, under the hypotheses of Theorems~\ref{thm:d2_d2}, \ref{thm:unif_stab}, \ref{thm:2var_d2}, and \ref{thm:implicit_fun}, by  \cite{caffarelli1992reg_convex_potential} we see that $\phi_i$ is strictly convex, meaning that $\nabla \phi_i$ is invertible on $\Omega$. We will freely use this fact.

Now recall the following definition.
\begin{Defi}\label{def: mk geodesics}
    Given $\mu_0$, $\mu_1\in \mathcal{P}_2(\R^d)$, a \emph{$2$-Monge--Kantorovich geodesic between $\mu_0$ and $\mu_1$} is a curve $[0, 1]\ni t\mapsto \mu_t\in \mathcal{P}_2(\R^d)$ such that
    \begin{align*}
        \mk_2(\mu_t, \mu_s)=\lvert t-s\rvert\mk_2(\mu_0, \mu_1),\qquad \forall t, s\in [0, 1].
    \end{align*}
\end{Defi}

To prove \cref{thm:d2_d2} we consider $2$-Monge--Kantorovich geodesics $f_t$, $g_t$, $0\leq t\leq 1$ between $f_0$, $f_1$ and $g_0$, $g_1$ respectively. If $\zeta_0$ is the Brenier potential from $f_0$ to $f_1$, and $\eta_0$ is the Brenier potential from $g_0$ to $g_1$, it is well known (see for example \cite[Section 5.1.3]{villani2003topicsOT}) that for all $t\in [0, 1]$,
\begin{align*}
    f_t=((1-t)\Id+ t\nabla \zeta_0)_\sharp f_0,
    \qquad g_t=((1-t)\Id+ t\nabla \eta_0)_\sharp g_0.
\end{align*}
Also by \cite[Theorem 5.51]{villani2003topicsOT} and \cite[Lemma 8.1.2]{AGSbook}, there exist functions $u_t$ and $v_t$ that are locally Lipschitz on $\Omega$ and $\Omega_*$ respectively for $t\in (0, 1)$ such that 
\begin{align}\label{eq:CE_w2_geo}
\begin{split}
  \begin{cases}
      \partial_{t}f_{t}+\div\left(f_{t}\nabla u_{t}\right)=0,& \text{on }\Omega,\\
      \inner{\nabla u_t}{\normal}=0,& \text{on }\partial\Omega,
  \end{cases}\\
  \begin{cases}
      \partial_{t}g_{t}+\div\left(g_{t}\nabla v_{t}\right)=0,& \text{on }\Omega_*,\\
      \inner{\nabla v_t}{\normal_*}=0,& \text{on }\partial\Omega_*,
  \end{cases}
  \end{split}
\end{align}
in the weak sense, meaning that for any $\Theta\in \mathcal C^1([0, 1]\times\overline{\Omega})$,
\begin{align*}
    \int_\Omega \Theta(1, x)f_1(x)dx-
    \int_\Omega \Theta(0, x)f_0(x)dx
    &=\int_0^1\int_\Omega(\partial_t\Theta+\inner{\nabla_x \Theta}{\nabla u_t})f_tdxdt,
\end{align*}
and similarly for the pair $(g_t, v_t)$. The result \cite[Theorem 5.51]{villani2003topicsOT} applies to weak solutions on all of $[0, 1]\times \R^d$, however since $\Omega$ and $\Omega_*$ are convex, it is well known that $f_t$ and $g_t$ remain supported on $\overline{\Omega}$ and $\overline{\Omega}_*$ respectively. By the $\mathcal C^1$ regularity of $\partial\Omega$ and $\partial\Omega_*$, any function in $\mathcal C^1([0, 1]\times \overline\Omega)$ or $\mathcal C^1([0, 1]\times \overline{\Omega_*})$ can be extended to a function in $\mathcal C^1([0, 1]\times \R^d)$, thus the above weak formulations of~\eqref{eq:CE_w2_geo} on $\Omega$ and $\Omega_*$ hold.

Additionally, $u_t$, $v_t$ solve Hamilton-Jacobi equations and can be expressed through the Hopf-Lax formula as
\begin{align}\label{eq:w2_vec_field_potential}
     u_{t}(x)& =-\frac{1}{t}\sup_{y\in \R^d}\left(\inner{x}{y}-t\zeta_{0}(y)-(1-t)\frac{|y|^{2}}{2}\right)+\frac{|x|^{2}}{2t},\qquad u_{0}(x)=\zeta_{0}(x)-\frac{|x|^{2}}{2},\\
    v_{t}(x)& =-\frac{1}{t}\sup_{y\in \R^d}\left(\inner{x}{y}-t\eta_{0}(y)-(1-t)\frac{|y|^{2}}{2}\right)+\frac{|x|^{2}}{2t}, 
    \qquad v_{0}(x)=\eta_{0}(x)-\frac{|x|^{2}}{2};\nonumber
\end{align}
 in particular as they are constructed from Legendre transforms of uniformly convex functions, $u_t$ and $v_t$ are differentiable for $t<1$, and also for $t=1$ when $\zeta_0$, $\eta_0$ are strictly convex. Moreover, it is known (see \cite[Section 16.2]{ambrosio2021lectures}) that
$$
\nabla u_t\big((1-t)x+t\nabla\zeta_0(x)\big) = \nabla\zeta_0(x)-x,\qquad
\nabla v_t\big((1-t)x+t\nabla\eta_0(x)\big) = \nabla\eta_0(x)-x.
$$
Subsequently, we consider smooth densities which are bounded from below, in which case by Lemma~\ref{lem: global C2 estimate} the Brenier potentials $\zeta_0$ and $\eta_0$ are $\mathcal C^{2, \alpha}$ up to the boundary and uniformly convex. Thus the mappings $x\mapsto (1-t)x+t\nabla\zeta_0(x)$ and $x\mapsto (1-t)x+t\nabla\eta_0(x)$ will be  $\mathcal C^{1, \alpha}$ diffeomorphisms which extend continuously to the boundary, hence for each fixed $t$, the functions  $u_t$, $v_t$ will inherit the higher regularity of $\zeta_0$ and $\eta_0$ respectively up to $\mathcal C^{1, \alpha}$. Let us now show that interpolation via 2-Monge-Kantorovich geodesics preserves Hölder regularity and non-degeneracy of the densities.

\begin{lemma}\label{lem:interp_Holder_bound}
  Let $f_0$, $f_1\in\mathcal{C}^{0,\beta}(\overline{\Omega})$ be probability densities with $f_i\geq a$, $\Vert f_i\Vert_{\mathcal{C}^{0,\beta}(\overline{\Omega})}\leq M$ and let $\{f_{t}\}_{0\leq t\leq1}$ be a $2$-Monge--Kantorovich geodesic between $f_0, f_1$. If $\Omega$ is convex and has $\mathcal{C}^{2,\beta}$ boundary then for any $0<\alpha<\beta$ there exist $\delta>0$, $C>0$ depending on $a$, $M$, $\alpha$ and $\Omega$ such that $\Vert f_{t}\Vert_{\mathcal{C}^{0,\alpha}(\overline{\Omega})}\leq C$ and $f_{t}\geq\delta$.
\end{lemma}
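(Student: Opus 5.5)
We will write $f_t$ through the displacement map and use the regularity of the Brenier potential $\zeta_0$ from $f_0$ to $f_1$. By Lemma~\ref{lem: global C2 estimate}, applied with source $f_0$, target $f_1$ and $\Omega_*=\Omega$ (the assumptions match, including $\mathcal C^{2,\beta}$ boundary), we get $\Vert\zeta_0\Vert_{\mathcal C^{2,\alpha'}(\overline\Omega)}\leq C$ for any $\alpha<\alpha'<\beta$, with $C$ depending only on $a$, $M$, $\alpha'$, $\beta$ and $\Omega$. Remark~\ref{rmk:ellipticity_D2phi} then gives $\lambda\Id\leq D^2\zeta_0\leq\Lambda\Id$ with $\lambda,\Lambda$ controlled by the same data (note $\sup f_1\leq M$).

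Set $T_t(x)=(1-t)x+t\nabla\zeta_0(x)$. Then $DT_t=(1-t)\Id+tD^2\zeta_0$ is symmetric, with
\[
\min(1,\lambda)\Id\leq DT_t\leq\max(1,\Lambda)\Id .
\]
So $T_t$ is the gradient of the uniformly convex function $(1-t)|x|^2/2+t\zeta_0$, and is therefore injective. It maps $\overline\Omega$ onto $\overline\Omega$: its image is contained in $\overline\Omega$ by convexity, and it is onto because $f_t$ has full mass on $\overline\Omega$ and is supported there. Alternatively, use a degree argument on the boundary, since $\nabla\zeta_0(\partial\Omega)=\partial\Omega$. The change of variables formula then gives
\[
f_t(T_t(x))=\frac{f_0(x)}{\det\big((1-t)\Id+tD^2\zeta_0(x)\big)},
\]
and hence
\[
f_t=\frac{f_0}{\det DT_t}\circ T_t^{-1}.
\]

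The lower bound is now immediate: $f_t\geq a/\max(1,\Lambda)^d=:\delta$.

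For the Hölder bound, we first note that $T_t^{-1}$ is Lipschitz. Indeed, by strong monotonicity,
\[
\langle T_t(x)-T_t(y),x-y\rangle\geq\min(1,\lambda)|x-y|^2,
\]
so $|T_t^{-1}(z)-T_t^{-1}(w)|\leq\min(1,\lambda)^{-1}|z-w|$, uniformly in $t$. This works on the convex set $\Omega$ without needing an inverse function theorem.

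Next, the numerator $f_0$ is $\mathcal C^{0,\beta}$. The denominator $\det DT_t$ is a polynomial in the entries of $D^2\zeta_0\in\mathcal C^{0,\alpha'}$ with bounded coefficients, so it lies in $\mathcal C^{0,\alpha'}$. It is also bounded below by $\min(1,\lambda)^d$, so its reciprocal is in $\mathcal C^{0,\alpha'}$ with controlled norm. The product $f_0/\det DT_t$ is therefore in $\mathcal C^{0,\alpha'}\subset\mathcal C^{0,\alpha}$, since $\Omega$ is bounded. Composing with the uniformly Lipschitz map $T_t^{-1}$ keeps it in $\mathcal C^{0,\alpha}$, with the norm multiplied by at most $\min(1,\lambda)^{-\alpha}$. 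The sup norm is bounded by $M/\min(1,\lambda)^d$.

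The main obstacle is only the first step, namely the global $\mathcal C^{2,\alpha'}$ bound on $\zeta_0$ with quantitative dependence on the data, which is exactly what Lemma~\ref{lem: global C2 estimate} provides. This step is also where the loss from $\beta$ to $\alpha$ comes from. The remaining steps are routine Hölder calculus. One check is that uniform convexity of $\Omega$ is not needed for the argument above beyond what Lemma~\ref{lem: global C2 estimate} requires. If that lemma implicitly uses uniform convexity, it is available from the standing assumption of this section.
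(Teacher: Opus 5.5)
Your route is the paper's: write $f_t=\bigl(f_0/\det DT_t\bigr)\circ T_t^{-1}$ with $T_t=\nabla\zeta_t$, $\zeta_t=(1-t)\frac{|x|^2}{2}+t\zeta_0$. Then get two-sided Hessian bounds from Lemma~\ref{lem: global C2 estimate} and Remark~\ref{rmk:ellipticity_D2phi}, and finish with a Lipschitz bound on $T_t^{-1}$ and H\"older calculus. Your strong-monotonicity bound is a tidy substitute for the paper's formula for $D[(\nabla\zeta_t)^{-1}]$.

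The gap is the claim $T_t(\overline\Omega)=\overline\Omega$, and neither of your justifications supports it. That $f_t=(T_t)_\sharp f_0$ carries all its mass in $\overline\Omega$ only gives $\supp f_t=T_t(\overline\Omega)\subset\overline\Omega$. The degree argument would need $T_t(\partial\Omega)\cap\Omega=\emptyset$. But $\nabla\zeta_0(\partial\Omega)=\partial\Omega$ is information at $t=1$ only. If $x\in\partial\Omega$ and $\nabla\zeta_0(x)\neq x$, then for $0<t<1$ the point $T_t(x)$ lies on the open chord between two distinct boundary points, hence in $\Omega$ by strict convexity.

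This step cannot be repaired, because it is false, and with it the lemma as stated. $T_t$ is injective on $\overline\Omega$ by your monotonicity estimate. After a $\mathcal C^2$ extension of $\zeta_0$, it is also a local diffeomorphism near each boundary point. Hence $T_t(\partial\Omega)$ is exactly the topological boundary of $T_t(\overline\Omega)=\supp f_t$. So if $\nabla\zeta_0$ moves even one boundary point, then for every $t\in(0,1)$, $f_t$ vanishes on a nonempty open subset of $\Omega$. Both $f_t\geq\delta$ and the $\mathcal C^{0,\alpha}(\overline\Omega)$ bound then fail.

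This is the generic situation, since the second boundary value problem only imposes $\nabla\zeta_0(\partial\Omega)=\partial\Omega$. For instance, on the unit disk $B\subset\mathbb R^2$ let $f_0=\frac{1}{\pi}$ and $f_1=(\nabla\zeta_0)_\sharp f_0$ with $\zeta_0=\frac12|x|^2+\epsilon\psi$, where
\[
\psi=\chi(r)\cos 2\theta+(r-1)\chi(r)m_\epsilon(\theta).
\]
Here $\chi$ is a cutoff equal to $1$ near $r=1$ and $0$ near $r=0$, and $m_\epsilon=-2\epsilon\sin^2 2\theta+O(\epsilon^3)$ solves $2m_\epsilon+\epsilon(m_\epsilon^2+4\sin^2 2\theta)=0$, so that $|\nabla\zeta_0|=1$ on $\partial B$. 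For small $\epsilon$, $\nabla\zeta_0$ is a smooth diffeomorphism of $\overline B$ and $f_1$ is smooth and positive. On $\partial B$, $\nabla\zeta_0-x$ has tangential component $-2\epsilon\sin 2\theta$. One checks directly that
\[
|T_{1/2}(B)|=\pi+\frac{\epsilon}{4}\int_B\Delta\psi=\pi-\frac{\pi}{2}\epsilon^2+O(\epsilon^4)<\pi .
\]

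What your argument actually proves is the lower bound and H\"older bound for $f_t$ on its support $T_t(\overline\Omega)$. The paper's proof makes the same tacit assumption when it writes $(\nabla\zeta_t)^{-1}$ on all of $\overline\Omega$. Upgrading the bounds to $\overline\Omega$ would require $T_t(\partial\Omega)\subset\partial\Omega$, which for strictly convex $\Omega$ means $\nabla\zeta_0=\Id$ on $\partial\Omega$. The hypotheses do not give this.
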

\begin{proof}
  Let $\zeta_0$ be a Brenier potential from $f_0$ to $f_1$, then by the discussion above $\zeta_{t}:=(1-t)\frac{|x|^{2}}{2}+t\zeta_0$ is a Brenier potential from $f_0$ to $f_{t}$ for any $t\in [0, 1]$, which belongs to $\mathcal{C}^{0,\alpha}(\overline{\Omega})$ by Lemma~\ref{lem: global C2 estimate} and solves~\eqref{eqn: MA eqn}. Re-writing~\eqref{eqn: MA eqn} and using that $\nabla \zeta_t$ is invertible, we have
  \begin{equation*}
    f_{t}(z)=\frac{f_0\left((\nabla\zeta_{t})^{-1}(z)\right)}{\det D^{2}\zeta_{t}\left((\nabla\zeta_{t})^{-1}(z)\right)}.
  \end{equation*}
  and again we make use of Lemma \ref{lem: global C2 estimate}, which yields the uniform ellipticity bounds 
  \begin{equation*}
    \Lambda^{-1}\leq D^{2}\zeta_{t}\leq\Lambda,
  \end{equation*}
  where $\Lambda$ depends on $\Omega$, $a$, and $M$. This proves that $f_{t}$ has a lower bound depending only on $a$, $M$, and $\Omega$. Finally, we note that 
  \begin{equation*}
    D\left[(\nabla\zeta_{t})^{-1}\right]=\left(D^{2}\zeta_{t}\right)^{-1}\circ(\nabla\zeta_{t})^{-1}
  \end{equation*}
  is bounded by $\Lambda$ , so $(\nabla\zeta_{t})^{-1}(z)$ is $ \Lambda$-Lipschitz and $f_0\left((\nabla\zeta_{t})^{-1}(z)\right)$ has a $\mathcal{C}^{0,\alpha}$ norm controlled in terms of $a$, $M$, and $\Omega$. Then the proof is finished after applying Lemmas~\ref{lem:reg_of_comp1}, and \ref{lem:prod_quotient_Holder}.
\end{proof}
Next, we rewrite the linearized Monge-Ampere equation for this particular interpolation of the densities.
\begin{lemma}\label{lem:technical_d2_stab}
  Let $\Omega, \Omega_*\subset\R^d$ with $\partial\Omega,\,\partial\Omega_*$ of class $\mathcal{C}^{2,\beta}$.  Assume $f_0$, $f_1\in\mathcal{C}^{\infty}(\overline{\Omega})$, ${g_0,g_1\in\mathcal{C}^{\infty}(\overline{\Omega}_{*})}$ are probability densities with $f_i$, $g_i\geq a>0$ and let $\{f_t\}_{0\leq t\leq 1}$, $\{g_t\}_{0\leq t\leq 1}$ be $2$-Monge--Kantorovich geodesics between $f_0$ and $f_1$, and $g_0$ and $g_1$ respectively. Let $\phi_t$ be the Brenier potentials from $f_t$ to $g_t$ satisfying $\int_\Omega \phi_t=0$ for each $t\in [0, 1]$ (unique, as $\Omega$ is convex). Then $\phi_{t}$ is differentiable in $t\in (0, 1)$, and for each $t\in (0, 1)$ the function $\xi_{t}:=\partial_{t}\phi_{t}\in C^1(\overline{\Omega})$ and is a weak solution of 
  \begin{equation}\label{eq:PDE_xi_w2_interp}
    \begin{cases}
      -\div\left[f_{t}\left(D^{2}\phi_{t}\right)^{-1}\nabla\xi_{t}\right]=\div\left[f_{t}\left(\nabla u_{t}-\left(D^{2}\phi_{t}\right)^{-1}\nabla v_{t}(\nabla\phi_{t})\right)\right], & \textnormal{in }\Omega,\\
      \inner{f_t\left(D^{2}\phi_t\right)^{-1}\nabla\xi_{t}}{\normal} =0, & \textnormal{on }\partial\Omega.
    \end{cases}
  \end{equation}
\end{lemma}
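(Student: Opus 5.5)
Since $f_0,f_1,g_0,g_1$ are smooth and bounded below, the Brenier potentials $\zeta_0$ (from $f_0$ to $f_1$) and $\eta_0$ (from $g_0$ to $g_1$) are $\mathcal C^{2,\alpha}$ up to the boundary and uniformly convex by Lemma~\ref{lem: global C2 estimate} and Remark~\ref{rmk:ellipticity_D2phi}. By Lemma~\ref{lem:interp_Holder_bound}, the geodesics $f_t$, $g_t$ are uniformly $\mathcal C^{0,\alpha}$ and bounded below by some $\delta>0$. I would apply Corollary~\ref{cor:implicit_fun}, which needs continuity of $\partial_tf_t$, $\partial_tg_t$ in $(t,x)$. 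These time derivatives come from the explicit formula
\begin{equation*}
f_t(z)=\frac{f_0\left((\nabla\zeta_t)^{-1}(z)\right)}{\det D^2\zeta_t\left((\nabla\zeta_t)^{-1}(z)\right)},\qquad \zeta_t=(1-t)\tfrac{|x|^2}{2}+t\zeta_0.
\end{equation*}
With smooth densities one can bootstrap the regularity of $\zeta_0$ (interior and boundary Schauder theory for smooth data; the paper's assumptions give at least $\mathcal C^{3,\alpha}$, which suffices). Then $(t,z)\mapsto f_t(z)$ is $\mathcal C^1$, and $\partial_t f_t=-\div(f_t\nabla u_t)$ holds classically. The same holds for $g_t$. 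Corollary~\ref{cor:implicit_fun} then gives that $\phi_t$ is differentiable and $\xi_t$ satisfies \eqref{eqn: weak PDE nonsmooth}. Elliptic regularity (the remark after the corollary) gives $\xi_t\in\mathcal C^1(\overline\Omega)$.

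The main step is to rewrite the right-hand side of \eqref{eqn: weak PDE nonsmooth} in divergence form. For $\theta\in H^1(\Omega)$ (by density, take $\theta\in\mathcal C^1(\overline\Omega)$), the source term gives
\begin{equation*}
-\int_\Omega \partial_tf_t\,\theta=-\int_\Omega f_t\inner{\nabla u_t}{\nabla\theta},
\end{equation*}
using the continuity equation \eqref{eq:CE_w2_geo} with the no-flux condition $\inner{\nabla u_t}{\normal}=0$. For the target term, I change variables $y=\nabla\phi_t(x)$ and use $g_t(\nabla\phi_t)\det D^2\phi_t=f_t$. Writing $\tilde\theta=\theta\circ(\nabla\phi_t)^{-1}$, this gives
\begin{equation*}
\int_\Omega \frac{\partial_tg_t(\nabla\phi_t)}{g_t(\nabla\phi_t)}\theta f_t\,dx=\int_{\Omega_*}\partial_tg_t\,\tilde\theta\,dy=\int_{\Omega_*}g_t\inner{\nabla v_t}{\nabla\tilde\theta}\,dy.
\end{equation*}
Here the continuity equation for $g_t$ with its Neumann condition was used. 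Now $\nabla\tilde\theta(y)=(D^2\phi_t)^{-1}\nabla\theta$ evaluated at $x=(\nabla\phi_t)^{-1}(y)$, since $D^2\phi_t$ is symmetric. Changing variables back gives
\begin{equation*}
\int_\Omega f_t\inner{(D^2\phi_t)^{-1}\nabla v_t(\nabla\phi_t)}{\nabla\theta}\,dx.
\end{equation*}
Combining the two terms, the weak formulation reads
\begin{equation*}
\int_\Omega f_t\inner{(D^2\phi_t)^{-1}\nabla\xi_t}{\nabla\theta}=-\int_\Omega f_t\inner{\nabla u_t-(D^2\phi_t)^{-1}\nabla v_t(\nabla\phi_t)}{\nabla\theta}
\end{equation*}
for all $\theta$. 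This is exactly the weak form of \eqref{eq:PDE_xi_w2_interp}, including the natural boundary condition.

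I expect the main obstacle to be justifying the regularity needed for these manipulations. Specifically, $\partial_tf_t$ and $\partial_tg_t$ must be continuous in $(t,x)$ up to the boundary, including near $t=0,1$. The continuity equations must also hold in the strong or weak sense with the no-flux boundary conditions, which requires $\nabla u_t$ to be $\mathcal C^1$ up to $\partial\Omega$. I would obtain this by first establishing higher regularity of $\zeta_0$ and $\eta_0$ from the smoothness of the densities. I would then use the representation $\nabla u_t((1-t)x+t\nabla\zeta_0(x))=\nabla\zeta_0(x)-x$ together with the $\mathcal C^{1,\alpha}$-diffeomorphism property of $x\mapsto(1-t)x+t\nabla\zeta_0(x)$. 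If the regularity at the endpoints $t=0,1$ is delicate, I would apply Corollary~\ref{cor:implicit_fun} on subintervals $[\epsilon,1-\epsilon]$, which is enough since the claim concerns $t\in(0,1)$.
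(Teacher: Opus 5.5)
Your proposal is correct and follows essentially the same route as the paper: you verify the hypotheses of Corollary~\ref{cor:implicit_fun} through the explicit push-forward formula for $f_t$, $g_t$ (after bootstrapping $\zeta_0,\eta_0$ to $\mathcal C^3$ so that $\partial_t f_t$, $\partial_t g_t$ are continuous in $(t,x)$), then put the source terms in divergence form using the no-flux continuity equations \eqref{eq:CE_w2_geo} and the change of variables $y=\nabla\phi_t(x)$, and finish with the Schauder estimate to get $\xi_t\in\mathcal C^1(\overline\Omega)$. The only cosmetic difference is that you compose a test function on $\Omega$ with $(\nabla\phi_t)^{-1}$, whereas the paper tests with $\theta\circ\nabla\phi_t$ for $\theta$ on $\Omega_*$; these are equivalent because $\nabla\phi_t$ is a $\mathcal C^{1,\alpha}$ diffeomorphism of $\overline\Omega$ onto $\overline\Omega_*$.
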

\begin{proof}
Write $T_t(x):=(1-t)x+t\nabla \zeta_0(x)$, recalling that $\zeta_0$ is a Brenier potential from $f_0$ to $f_1$, then under our assumptions, as mentioned above $T_t$ is an optimal map from $f_0$ to $f_t$, smooth on $\overline\Omega$, and is invertible on $\overline\Omega$ for each $t\in [0, 1]$. For a fixed $x\in \overline\Omega$, define the $\mathcal{C}^{1, \alpha}$ map $F_x: [0, 1]\times \overline\Omega\to \overline\Omega$ by $F_x(t, y):=T_t(y)-x$. We calculate the Jacobian as
\begin{align*}
    D_{t, y}F_x(t, y)=
    \begin{pmatrix}
        \nabla\zeta_0(y)-y&
        (1-t)\Id+tD^2\zeta_0(y)
    \end{pmatrix}
\end{align*}
which has full rank for all $t\in [0, 1]$, thus by the implicit function theorem, $T_t^{-1}(x)$ is $\mathcal{C}^1$ in $t\in (0, 1)$ for any $x\in \overline\Omega$. Differentiating the relation $F_x(t, T_t^{-1}(x))=0$ in $t$, we have
\begin{align*}
    \partial_tT_t^{-1}(x)
    &=-[(1-t)\Id+tD^2\zeta_0(T_t^{-1}(x))]^{-1}(\nabla \zeta_0(T_t^{-1}(x))-T_t^{-1}(x))
\end{align*}
which we see is continuous and bounded uniformly in $(t, x)\in [0, 1]\times \overline\Omega$. Note that by differentiating~\eqref{eqn: MA eqn}, first order derivatives of $\zeta_0$ satisfy a linear, uniformly elliptic (by Lemma~\ref{lem: global C2 estimate}) equation, with boundary condition that is uniformly oblique by~\cite{Urbas97}. Thus by the assumed regularity of $\partial\Omega$ and $\partial\Omega_*$, we can apply~\cite[Theorem 6.31]{gilbarg_trudinger1977elliptic} to see $\zeta_0\in \mathcal{C}^3(\overline\Omega)$. Since $f_t(x)=f_0(T_t^{-1}(x))/\det((1-t)\Id+tD^2\zeta_0(T_t^{-1}(x)))$, by differentiating we find for a fixed $x\in \overline{\Omega}$
\begin{align*}
    \partial_tf_t
    &=f_t\left(\frac{\inner{\nabla f_0\circ T_t^{-1}}{\partial_tT_t^{-1}}}{f_0\circ T_t^{-1}}\right.\\
    -&\tr\Bigg([(1-t)\Id+t(D^2\zeta_0\circ T_t^{-1})]^{-1}(D^2\zeta_0\circ T_t^{-1})-\Id+\sum_{i=1}^dD^2(\partial_i\zeta_0)\circ T_t^{-1}(\partial_tT_t^{-1})_i\Bigg),
\end{align*}
 which is bounded uniformly and continuous in $(t, x)\in [0, 1]\times \overline\Omega$. Similar calculations hold for $g_t$.

  Thus we may apply Corollary~\ref{cor:implicit_fun}, and we have that $\phi_{t}(x)$ is differentiable with respect to $t$ for each $x$ and $\xi_t$ is a weak solution of 
  \begin{equation*}
    \begin{cases}
      {\displaystyle -\div\left[f_t\left(D^{2}\phi_t\right)^{-1}\nabla\xi_t\right]=\frac{\partial_{t}g_t(\nabla\phi_t)}{g_t(\nabla\phi_t)}f_t-\partial_{t}f_t,} & \textnormal{in }\Omega,\\
      \inner{f_t\left(D^{2}\phi_t\right)^{-1}\nabla\xi_{t}}{\normal} =0, & \textnormal{on }\partial\Omega.
    \end{cases}
  \end{equation*}
  The term $\partial_{t}f_t$ can be written in divergence form as $f_{t}$ solves the continuity equation~\eqref{eq:CE_w2_geo}. To show that $\frac{\partial_{t}g_t(\nabla\phi_t)}{g_t(\nabla\phi_t)}f_t$ can be written as a divergence, fix $t\in [0, 1]$, consider $\theta\in\mathcal{C}^{\infty}(\overline{\Omega}_{*})$, and use $\tilde{\theta}(x):=\theta\left(\nabla\phi_t(x)\right)$ as a test function in~\eqref{eq:CE_w2_geo} to obtain
  \begin{align*}
    \int_{\Omega}\frac{\partial_{t}g_t\left(\nabla\phi_t(x)\right)}{g_t\left(\nabla\phi_t(x)\right)}\tilde{\theta}(x)f_t(x)\,dx &
    =\int_{\Omega_{*}}\partial_{t}g_t(y)\theta(y)\,dy\\
    & =\int_{\Omega_{*}}\inner{\nabla v_t(y)}{\nabla\theta(y)} g_t(y)\,dy\\
    & =\int_{\Omega}\inner{\nabla v_t\left(\nabla\phi_t(x)\right)}{\nabla\theta\left(\nabla\phi_t(x)\right)}f_t(x)\,dx\\
    & =\int_{\Omega}\inner{\left(D^{2}\phi_t(x)\right)^{-1}\nabla v_t\left(\nabla\phi_t(x)\right)}{\nabla\tilde{\theta}(x)}f_t(x)\,dx.
  \end{align*}
  Since $x\mapsto \nabla \phi_t(x)$ is a diffeomorphism of $\overline\Omega$ with $\overline{\Omega}_*$, this shows that in the weak sense,
  \begin{equation*}
    \frac{\partial_{t}g_t\left(\nabla\phi_t(x)\right)}{g_t\left(\nabla\phi_t(x)\right)}f_t(x)=-\div\left[f_{t}\left(D^{2}\phi_{t}\right)^{-1}\nabla v_{t}(\nabla\phi_{t})\right],
  \end{equation*}
  thus $\xi_t$ is a weak solution of~\eqref{eq:PDE_xi_w2_interp}. 

  Now since $\Omega$ and $\Omega_*$ are uniformly convex, by Lemmas \ref{lem:interp_Holder_bound} and \ref{lem: global C2 estimate}, we conclude that the coefficient matrix from \cref{eq:PDE_xi_w2_interp} is uniformly elliptic, that is
  \begin{equation*}
    \Lambda^{-1}\Id\leq\left(D^{2}\phi_{t}\right)^{-1}\leq\Lambda\Id,
  \end{equation*}
  where $\Lambda>0$ only depends on $ \left\Vert f_i\right\Vert _{\mathcal{C}^{0,\alpha}(\overline{\Omega})},\left\Vert g_i\right\Vert _{\mathcal{C}^{0,\alpha}(\overline{\Omega}_*)}$, $a$, $\Omega$, and $\Omega_{*}$. Since $f_t$ is also bounded away from zero and infinity on $\Omega$, by the smoothness of the expression on the right hand side of \cref{eq:PDE_xi_w2_interp}, we can apply Lemma \ref{lem:Schauder_estimate} to obtain that $\xi_t\in \mathcal C^1(\overline\Omega)$.
\end{proof}

To prove estimate \eqref{eq:boundplans} for the transport plans, we require the following technical lemma:
\begin{lemma}\label{lem: vf for plans}
  Assume $f_0$, $f_1\in\mathcal{C}^{\infty}(\overline{\Omega})$, $g_0,g_1\in\mathcal{C}^{\infty}(\overline{\Omega}_{*})$ are probability densities with $f_i$, $g_i\geq a>0$. Let $\{f_t\}_{0\leq t\leq 1}$ and $\{g_t\}_{0\leq t\leq 1}$ be $2$-Monge--Kantorovich geodesics from $f_0$ to $f_1$, and $g_0$ to $g_1$ respectively, and let $\pi_t$ be the optimal plan from $f_t$ to $g_t$ for each $t\in [0, 1]$. Then $\pi_t$ weakly satisfies a continuity equation
  \begin{equation*}
    \partial_{t}\pi_t+\div_{x,y}\left(\pi_t\left(\vec{V}_{1}(x,y),\vec{V}_{2}(x,y)\right)\right)=0,
  \end{equation*}
  where the driving vector field can be chosen to depend only on $x$ as
  \begin{equation*}
    \vec V_1(x,y) = \nabla u_t(x),\qquad
    \vec V_2(x,y) = \nabla\xi_t(x)+D^{2}\phi_t(x)\nabla u_t(x);
  \end{equation*}
  here $\xi_t:=\partial_t\phi_t$ and $u_t$ is from~\eqref{eq:CE_w2_geo}.
\end{lemma}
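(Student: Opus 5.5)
The plan is to use the graph structure of the plan. Since $\phi_t$ is a Brenier potential between regular densities, $\pi_t=(\Id,\nabla\phi_t)_\sharp f_t$. So for any test function $\Psi\in\mathcal C^1([0,1]\times\overline\Omega\times\overline\Omega_*)$ we have
\begin{equation*}
  \int\Psi(t,x,y)\,d\pi_t(x,y)=\int_\Omega\Psi\big(t,x,\nabla\phi_t(x)\big)f_t(x)\,dx .
\end{equation*}
It therefore suffices to differentiate the right-hand side in $t$ and identify the result as
\begin{equation*}
  \int\big(\partial_t\Psi+\inner{\nabla_x\Psi}{\vec V_1}+\inner{\nabla_y\Psi}{\vec V_2}\big)\,d\pi_t .
\end{equation*}
The weak formulation follows once this derivative identity is established and integrated over $t\in[0,1]$.

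For the differentiation I introduce $\Theta(t,x):=\Psi(t,x,\nabla\phi_t(x))$. The regularity I need comes from \cref{lem:technical_d2_stab}, which gives differentiability of $\phi_t$ in $t$ and $\xi_t\in\mathcal C^1(\overline\Omega)$, together with the $\mathcal C^{2,\alpha}$ bounds from \cref{lem: global C2 estimate} and \cref{lem:interp_Holder_bound}. From these, $\Theta$ is $\mathcal C^1$ in $x$, and $\partial_t\nabla\phi_t=\nabla\xi_t$. The chain rule then gives
\begin{equation*}
  \partial_t\Theta=\partial_t\Psi+\inner{\nabla_y\Psi}{\nabla\xi_t},
  \qquad
  \nabla_x\Theta=\nabla_x\Psi+D^2\phi_t\,\nabla_y\Psi,
\end{equation*}
where every term of $\Psi$ is evaluated at $(t,x,\nabla\phi_t(x))$. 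Using $\Theta$ as the test function in the weak continuity equation \eqref{eq:CE_w2_geo} for $f_t$ yields
\begin{equation*}
  \frac{d}{dt}\int_\Omega\Theta f_t
  =\int_\Omega\big(\partial_t\Psi+\inner{\nabla_y\Psi}{\nabla\xi_t}+\inner{\nabla_x\Psi}{\nabla u_t}+\inner{\nabla_y\Psi}{D^2\phi_t\nabla u_t}\big)f_t\,dx .
\end{equation*}
Here I used that $D^2\phi_t$ is symmetric. Regrouping, the coefficient of $\nabla_x\Psi$ is $\nabla u_t$ and the coefficient of $\nabla_y\Psi$ is $\nabla\xi_t+D^2\phi_t\nabla u_t$. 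Pushing forward by $(\Id,\nabla\phi_t)$ turns the integral over $\Omega$ against $f_t$ into an integral against $\pi_t$, which gives the claimed continuity equation with the stated $\vec V_1$ and $\vec V_2$. These fields depend only on $x$, which is harmless because $\pi_t$ is concentrated on the graph of $\nabla\phi_t$.

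The main obstacle is justifying the time regularity needed to test \eqref{eq:CE_w2_geo} with $\Theta$. That weak formulation is stated for $\Theta\in\mathcal C^1([0,1]\times\overline\Omega)$, but $\Theta$ is only known to be differentiable in $t$ pointwise, with $\partial_t\nabla\phi_t=\nabla\xi_t$. I would handle this in three steps.
\begin{enumerate}
  \item Show that $(t,x)\mapsto\nabla\xi_t(x)$ is continuous and bounded. Uniform Schauder bounds from Lemma~\ref{lem:Schauder_estimate} give $\xi_t$ bounded in $\mathcal C^{1,\gamma}$ uniformly in $t$. The continuous dependence of the data on $t$, together with uniqueness of solutions of \eqref{eq:PDE_xi_w2_interp}, then gives continuity via a compactness argument.
  \item Note that $\nabla\phi_t$ itself is continuous in $(t,x)$ by the same stability.
  \item If needed, prove the identity on $[s,r]\subset(0,1)$ and let $s\to0$, $r\to1$ using continuity of $t\mapsto\pi_t$. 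Alternatively, mollify $\Theta$ in $t$ and pass to the limit.
\end{enumerate}
Once $\Theta$ is admissible, the remaining computation is the routine algebra above.
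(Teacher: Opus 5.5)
Your proposal is correct and follows essentially the same route as the paper. Both write $\pi_t=(\Id\times\nabla\phi_t)_\sharp f_t$, use $\partial_t\nabla\phi_t=\nabla\xi_t$ for the $y$-component, and convert the $\partial_t f_t$ contribution via the continuity equation for $f_t$ tested against $\theta(x,\nabla\phi_t(x))$, with the symmetry of $D^2\phi_t$ producing $\vec V_2=\nabla\xi_t+D^2\phi_t\nabla u_t$. The only difference is cosmetic: the paper differentiates $\int\theta\,d\pi_t$ for time-independent $\theta$, justifying $\partial_t\nabla\phi_t=\nabla\xi_t$ by Clairaut's theorem and uniform bounds, and then invokes a standard equivalence of weak formulations. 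You instead work directly with space-time test functions and spell out the continuity of $\nabla\xi_t$ in $(t,x)$ and the endpoint limits, which the paper treats only briefly.
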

\begin{proof}
  Take $\theta\in\mathcal{C}^{1}(\overline{\Omega\times\Omega_{*}})$ and let $\phi_t$ be the Brenier potential from $f_0$ to $f_t$ with zero average on $\Omega$ so that $\pi_t=(\Id\times \nabla \phi_t)_\sharp f_t$. By Lemma~\ref{lem:technical_d2_stab} and the discussion within its proof, we have that $\partial_t(\theta\left(x,\nabla\phi_{t}(x)\right)f_{t}(x))$ is bounded uniformly in $(t, x)\in [0, 1]\times \Omega$, and Clairaut's theorem applies to yield $\partial_t\nabla \phi_t=\nabla \xi_t$. Then we may differentiate under the integral below to see
  \begin{align*}
    \frac{d}{dt}\int_{\Omega\times\Omega_{*}}\theta(x,y)\,d\pi_{t}(x,y) &	
    =\frac{d}{dt}\int_{\Omega}\theta\left(x,\nabla\phi_{t}(x)\right)f_{t}(x)\,dx\\
    & =\int_{\Omega}\inner{\nabla\xi_{t}(x)}{\nabla_{y}\theta\left(x,\nabla\phi_{t}(x)\right)} f_{t}(x)\,dx\\
    &+\int_{\Omega}\theta\left(x,\nabla\phi_{t}(x)\right)\partial_{t}f_{t}(x)\,dx.
  \end{align*}
  By the regularity of $\phi_t$ on $\overline\Omega$, we can use the continuity equation~\eqref{eq:CE_w2_geo} for $f_t$ to obtain
  \begin{align*}
    \int_{\Omega}\theta\left(x,\nabla\phi_{t}(x)\right)\partial_{t}f_{t}(x)\,dx &
    =\int_{\Omega}\inner{\nabla_{x}\theta\left(x,\nabla\phi_{t}(x)\right)}{\nabla u_t(x)} f_{t}(x)\,dx	\\
    & +\int_{\Omega}\inner{D^{2}\phi_{t}(x)\nabla_{y}\theta\left(x,\nabla\phi_{t}(x)\right)}{\nabla u_t(x)} f_{t}(x)\,dx,
  \end{align*}
  thus combining, we have
  \begin{equation*}
    \frac{d}{dt}\int_{\Omega\times\Omega_{*}}\theta\,d\pi_{t}=
    \int_{\Omega\times\Omega}\inner{\big(\nabla_{x}\theta,\nabla_{y}\theta\big)}{\left(\nabla u_t,\nabla\xi_{t}+D^{2}\phi_{t}\nabla u_t\right)}d\pi_{t},
  \end{equation*}
  an equivalent weak formulation for the continuity equation by \cite[Proposition 6.2]{santambrogio2015OT}.
\end{proof}

Now we prove \cref{thm:d2_d2} maintaining the notation introduced in the previous lemmas of the section.

\begin{proof}[Proof of \cref{thm:d2_d2}]
    First we prove the estimate when $f_0$, $f_1$, $g_0$, $g_1$ are $\mathcal C^\infty$, then the result will be extended to Hölder continuous densities by approximation. Note that, due to Lemmas~\ref{lem:interp_Holder_bound}~and~\ref{lem: global C2 estimate}, we have a uniform ellipticity bound on $D^2\phi_t$. By Lemma~\ref{lem:technical_d2_stab}, for each fixed $t$ we may use $\xi_t$ as a test function in~\eqref{eq:PDE_xi_w2_interp} to obtain the second line below, followed by Young's inequality with an $\varepsilon$ to see,
  \begin{align*}
    \int_{\Omega}\left|\nabla\xi_t\right|^{2}f_t & \leq\Lambda\int_{\Omega}\inner{\left(D^{2}\phi_t\right)^{-1}\nabla\xi_t}{\nabla\xi_t} f_t	\\
    & =\Lambda\int_{\Omega}\inner{\left( \nabla u_t-\left(D^{2}\phi_t\right)^{-1}\nabla v_t(\nabla\phi_t)\right)}{\nabla\xi_t} f_t\\
    & \leq\frac{1}{2}\int_{\Omega}\left|\nabla\xi_t\right|^{2}f_t+\Lambda^{2}\int_{\Omega}\left|\nabla u_t\right|^{2}f_t+\Lambda^{4}\int_{\Omega}\left|\nabla v_t(\nabla\phi_t)\right|^{2}f_t\\
    & =\frac{1}{2}\int_{\Omega}\left|\nabla\xi_t\right|^{2}f_t+\Lambda^{2}\int_{\Omega}\left|\nabla u_t\right|^{2}f_t+\Lambda^{4}\int_{\Omega_{*}}\left|\nabla v_t\right|^{2}g_t.
  \end{align*}
  Rearranging and using the Benamou--Brenier formula (\cite[Theorem 8.1]{villani2003topicsOT}) implies
  \begin{equation}\label{eq:d2_estimate_xi}
    \int_{0}^{1}\int_{\Omega}\left|\nabla\xi_t\right|^{2}f_tdt\leq C\left[\mk_2(f_0,f_1)^2+\mk_2(g_0,g_1)^2\right],
  \end{equation}
  where $C$ depends on $\left\Vert f_i\right\Vert _{\mathcal{C}^{0,\alpha}(\overline{\Omega})},\left\Vert g_i\right\Vert _{\mathcal{C}^{0,\alpha}(\overline{\Omega}_*)}$, $a$, $\Omega$, and $\Omega_{*}$. Since $\nabla \phi_t(\Omega)\subset \Omega_*$, we have $\phi_t(x)$ is bounded uniformly for $(t, x)\in [0, 1]\times \overline\Omega$, then by Lemma~\ref{lem:moser_estimate} applied to~\eqref{eq:PDE_xi_w2_interp}, we see that $t\mapsto \phi_t(x)$ is Sobolev on $(0, 1)$ for each $x$, hence absolutely continuous on $[0, 1]$. From here the bound on the gradient is immediate since by Minkowski's integral inequality,
  \begin{align*}
      \left\Vert \nabla\phi_{1}-\nabla\phi_{0}\right\Vert _{L^{2}(\Omega)} 
      & =\left\Vert \int_{0}^{1}\nabla\xi_t\,dt\right\Vert _{L^{2}(\Omega)}\\
      & \leq\int_{0}^{1}\left\Vert \nabla\xi_t\right\Vert _{L^{2}(\Omega)}dt\leq C\left[\mk_2(f_0,f_1)+\mk_2(g_0,g_1)\right].
  \end{align*}

We now turn to the proof of the estimate  \eqref{eq:boundplans}. 
Since by Lemma~\ref{lem: vf for plans} we have an explicit vector field that drives a curve connecting $\pi_{0}$ and $\pi_{1}$, again the Benamou--Brenier formula (\cite[Theorem 8.1]{villani2003topicsOT}) gives us an upper bound on the distance between $\pi_{0}$, $\pi_{1}$, yielding
  \begin{align*}
    \mk_2(\pi_{0},\pi_{1})^2 &
    \leq\int_{0}^{1}\int_{\Omega\times\Omega_{*}}\left(|\nabla u_t(x)|^{2}+\left|\nabla\xi_{t}(x)+D^{2}\phi_{t}(x)\nabla u_t(x)\right|^{2}\right)\,d\pi_{t}(x,y)\,dt	\\
    & \leq\left(2\left\Vert D^{2}\phi_{t}\right\Vert _{L^{\infty}(\Omega\times[0,1])}^{2}+1\right)\int_{0}^{1}\int_{\Omega}|\nabla u_t(x)|^{2}f_{t}(x)\,dx\,dt+2\int_{0}^{1}\int_{\Omega}|\nabla\xi_{t}(x)|^{2}f_{t}(x)\,dx\,dt\\
    & =\left(2\left\Vert D^{2}\phi_{t}\right\Vert _{L^{\infty}(\Omega\times[0,1])}^{2}+1\right)\mk_2(f_0,f_1)^2+2\int_{0}^{1}\int_{\Omega}|\nabla\xi_{t}(x)|^{2}f_{t}(x)\,dx\,dt.
  \end{align*}
  Now recall that Lemma \ref{lem:interp_Holder_bound} together with Lemma \ref{lem: global C2 estimate} shows that%
  \begin{equation*}
    \left\Vert D^{2}\phi_{t}\right\Vert _{L^{\infty}(\Omega\times[0,1])}\leq C
  \end{equation*}
  where $C$ depends on $\Omega$, $\Omega_{*}$, $a$, $\left\Vert f_i\right\Vert _{\mathcal{C}^{0,\beta}(\overline{\Omega})}$, and $\left\Vert g_i\right\Vert _{\mathcal{C}^{0,\beta}(\overline{\Omega}_{*})}$. Then, the proof in this case is finished combining the above inequality with  \cref{eq:d2_estimate_xi}.

  Finally, we note that the constants obtained in the estimates~\eqref{eq:L2_d2_bound} and~\eqref{eq:boundplans} do not depend on any measure of smoothness of the densities other than $\left\Vert f_i\right\Vert _{\mathcal{C}^{0,\beta}(\overline{\Omega})}$ and $\left\Vert g_i\right\Vert _{\mathcal{C}^{0,\beta}(\overline{\Omega}_{*})}$. Thus we may approximate $f_i$ and $g_i$ by smooth densities in the $\mathcal{C}^{0,\alpha}$ norm, then taking limits in the resulting estimates finishes the proof: here we can note that by a proof similar to \cite[Lemma 5.4]{KitaWarren2012OTSphere}, uniform convergence of densities on a bounded domain implies convergence in $\mk_2$, then we use \cite[Corollary 5.23]{villani2009optimal_old_new} for~\eqref{eq:L2_d2_bound}, and \cite[Theorem 5.20]{villani2009optimal_old_new} combined with weak lower semicontinuity of $\mk_2$ (\cite[Remark 6.12]{villani2009optimal_old_new}) to take the limit in~\eqref{eq:boundplans}.
\end{proof}


\section{Proof of \cref{thm:L2_L2_stab_potential}}\label{sec:stab_L2}

The proof is based on a version of the Brascamp-Lieb inequality \cite{Brascamp_Lieb1976}, which can be seen to hold for log-concave measures supported on a bounded, convex domain. Specifically, if $\mu_F := e^{-F(x)}\mathds{1}_\Omega dx$ where $\Omega\subset \R^d$ is bounded and convex, $F\in \mathcal{C}^2(\overline\Omega)$, and $D^2F(x)>0$ for all $x\in \Omega$, then for any $u\in \mathcal{C}^1(\Omega)$,
\begin{align}\label{eqn: brascamp-lieb}
    \operatorname{Var}_{\mu_F}(u):=\int_\Omega \left(u-\int u\;d\mu_F\right)^2\,d\mu_F
\leq
\int_\Omega \inner{(D^2F)^{-1}\nabla u}{\nabla u}\,d\mu_F.
\end{align}
This form of inequality has previously been used to study stability of optimal transport in \cite[Theorem 2.3]{MerigotDela2023StabOt}, where it is attributed to \cite[Corollary 1.3]{LePeutrec17} and \cite[Section 3.1.1]{KolesnikovMilman17}, which requires at least $\mathcal{C}^2$ regularity of $\partial\Omega$. However, it is possible to obtain this inequality from only convexity of $\Omega$ with no additional assumption of regularity by following the argument on \cite[p. 161]{CorderoErausquinKlartag12}. Indeed, in the notation there, both \cite[(20)]{CorderoErausquinKlartag12} and the integration by parts formula before it hold for $\varphi\in \mathcal{C}^\infty_c(\Omega)$ (the former by integrating the standard Bochner's identity for the operator $L$). Next by \cite[Theorem 8.3]{gilbarg_trudinger1977elliptic}, there exists a weak solution $w\in W^{1, 2}_0(\Omega)$ to the equation $\Delta w-\inner{\nabla F}{\nabla w}=u-\int_\Omega ud\mu_F$. Since $\inner{\nabla F}{\nabla w}\in L^2(\Omega; dx)$ and $\Omega$ is convex, by \cite[Theorem 3.2.1.2]{Grisvard11} there exists a strong solution $v\in W^{2, 2}_0(\Omega)$ of $\Delta v=\inner{\nabla F}{\nabla w}+u-\int_\Omega ud\mu_F$, by the comparison principle \cite[Theorem 8.2]{gilbarg_trudinger1977elliptic}, we see $v=w$ a.e. on $\Omega$. Thus there exists $\varphi\in \mathcal{C}^\infty_c(\Omega)$ such that $\Delta \varphi-\inner{\nabla F}{\nabla \varphi}-(u-\int_\Omega ud\mu_F)$ has arbitrarily small $L^2(\Omega; dx)$ norm, hence small $L^2(\Omega; \mu_F)$ norm, from the boundedness of $F$. Thus the argument can be concluded exactly as on \cite[p. 161]{CorderoErausquinKlartag12} to obtain \eqref{eqn: brascamp-lieb}.

\begin{proof}[Proof of \cref{thm:L2_L2_stab_potential}]
  Again the proof will follow from the intuition outlined in \cref{sec:intuition}. In this case, we consider the linear interpolation of the densities
  $$
  f_t=(1-t)f_0+tf_1,\qquad g_t=(1-t)g_0+tg_1.
  $$
  First assume the densities are $\mathcal{C}^{1, \alpha}$ regular, then $t\mapsto f_t$ and $t\mapsto g_t$ are clearly smooth as curves valued in $\mathcal{C}^{1, \alpha}(\overline{\Omega})$ and $\mathcal{C}^{1, \alpha}(\overline{\Omega}_*)$ respectively, so that we can apply Theorem \ref{thm:implicit_fun}. In particular, if $\tilde\phi_t$ is the Brenier potential from $f_{t}$ to $g_{t}$ with $\int_\Omega \tilde\phi_t=0$ for all $t\in [0, 1]$ we have that $\tilde\phi_t\in C^{2, \alpha}(\overline{\Omega})$, is differentiable with respect to $t$, and $\tilde \xi_t:=\partial_t\tilde\phi_t$ solves, in the weak sense (using that $f_t$ and $g_t$ are linear interpolations),
  \begin{equation}\label{eq:PDE_xi_weaker_metrics}
    \begin{cases}
    {\displaystyle-\div\left[f_t\big(D^{2}\tilde\phi_t\big)^{-1}\nabla \tilde\xi_t\right]=\frac{g_1(\nabla\tilde\phi_t)-g_0(\nabla\tilde\phi_t)}{g_{t}(\nabla\tilde\phi_t)}f_{t}-(f_1-f_0),} & \textnormal{in }\Omega,\\
    \inner{f_t\big(D^{2}\tilde\phi_t\big)^{-1}\nabla\tilde\xi_t}{\normal} =0, & \textnormal{on }\partial\Omega.
    \end{cases}
  \end{equation}
  Now define $\phi_t:=\tilde\phi_t+\log\left(\int_\Omega e^{-\tilde\phi_t}\right)$, then we see $\int_\Omega e^{-\phi_t(x)}\,dx=1$ and $\nabla\phi_t=\nabla\tilde\phi_t$. Since $\partial_t\tilde{\phi}_t$ is bounded on $\overline{\Omega}$ and $\int_\Omega e^{-\tilde\phi_t}\neq 0$, we can justify differentiating under the integral to see that $\phi_t$ is also differentiable in $t$, and if $\xi_t:=\partial_t\phi_t$ we have $\nabla \xi_t=\nabla \tilde\xi_t$. Thus~\eqref{eq:PDE_xi_weaker_metrics} holds with $\xi_t$ and $\phi_t$ replacing $\tilde\xi_t$ and $\tilde\phi_t$.

  Next, since $\nabla\phi_t(\Omega)=\Omega_{*}$, we see $\phi_t$ is uniformly Lipschitz independent of $t$, thus $\left\Vert \phi_t\right\Vert _{L^{\infty}(\Omega)}$ is bounded in terms of $\Omega$ and $\Omega_{*}$; in particular there are $c_{1}$ and $c_{2}$ depending on $\Omega$ and $\Omega_{*}$ such that $c_1\leq \rho_t(x):=e^{-\phi_t(x)}\leq c_2$. For the rest of the computation $C>0$ will denote a constant depending on $d$, $\textnormal{diam}(\Omega)$, $\textnormal{diam}(\Omega_*)$, $a$, and $A$ which may change from line to line.  For each $t\in [0, 1]$ we can apply~\eqref{eqn: brascamp-lieb} with $F=\phi_t$ and $u=\xi_t$ to obtain
  \begin{align*}
    \int_{\Omega}\inner{\left(D^{2}\phi_t\right)^{-1}\nabla\xi_t}{\nabla\xi_t} f_t 
    & \geq C^{-1}\int_{\Omega}\inner{\left(D^{2}\phi_t\right)^{-1}\nabla\xi_t}{\nabla\xi_t} \,d\rho_t\\
    & \geq C^{-1}\int_{\Omega}\left(\xi_t-\int_\Omega \xi_t d\rho_t\right)^{2}\,d\rho_t\geq C^{-1}\int_{\Omega}\left(\xi_t-\int_\Omega \xi_t d\rho_t\right)^{2}\,dx.
  \end{align*}
  Taking $\xi_t-\int_\Omega \xi_t d\rho_t$ as a test function in \cref{eq:PDE_xi_weaker_metrics}, and combining with the above, we obtain
  \begin{align*}
    &\int_{\Omega}\left(\xi_t-\int_\Omega \xi_t d\rho_t\right)^{2}\,dx  \leq C\int_{\Omega}\inner{\left(D^{2}\phi_t\right)^{-1}\nabla\xi_t}{\nabla\xi_t} f_t	\\
    & =-C\int_{\Omega}(f_1-f_0)\left(\xi_t-\int_\Omega \xi_t d\rho_t\right)+C\int_{\Omega}\frac{g_1(\nabla\phi_t)-g_0(\nabla\phi_t)}{g_{t}(\nabla\phi_t)}f_{t}\left(\xi_t-\int_\Omega \xi_t d\rho_t\right) \\
    & \leq C\int_{\Omega}(f_1-f_0)^{2}+C\int_{\Omega_{*}}\left(g_1-g_0\right)^{2}+\frac{1}{2}\int_{\Omega}\left(\xi_t-\int_\Omega \xi_t d\rho_t\right)^{2}
  \end{align*}
  which shows
  \begin{equation}\label{eqn: variance bound}
    \left\Vert \xi_t-\int_\Omega \xi_t d\rho_t\right\Vert _{L^{2}(\Omega)}\leq C\left(\left\Vert f_1-f_0\right\Vert _{L^{2}(\Omega)}+\left\Vert g_1-g_0\right\Vert _{L^{2}(\Omega_*)}\right).
  \end{equation}
  For a fixed $x\in \overline{\Omega}$ we calculate,
  \begin{align*}
      \int_0^1\left(\xi_t(x)-\int_\Omega \xi_t d\rho_t\right)dt
      &=\phi_1(x)-\phi_0(x)- \int_0^1\int_\Omega\partial_t\phi_t(y)e^{-\phi_t(y)}dydt\\
      &=\phi_1(x)-\phi_0(x)+ \int_\Omega\int_0^1\partial_t\left(e^{-\phi_t(y)}\right)dtdy\\
      &=\phi_1(x)-\phi_0(x)+ \int_\Omega\left(e^{-\phi_1(y)}-e^{-\phi_0(y)}\right)dy=\phi_1(x)-\phi_0(x).
  \end{align*}
  Now the stability bound is a consequence of Minkowski's integral inequality combined with~\eqref{eqn: variance bound}:
  \begin{equation*}
    \left\Vert \phi_{1}-\phi_{0}\right\Vert _{L^{2}(\Omega)}=\left\Vert \int_{0}^{1}\left(\xi_t-\int_\Omega \xi_t d\rho_t\right)\,dt\right\Vert _{L^{2}(\Omega)}\leq C\left(\left\Vert f_1-f_0\right\Vert _{L^{2}(\Omega)}+\left\Vert g_1-g_0\right\Vert _{L^{2}(\Omega_*)}\right).
  \end{equation*}
  This finishes the proof of \cref{thm:L2_L2_stab_potential} for smooth densities. 
  
  Now note that the regularity of $f_i$, $g_i$ was only used to justify the differentiability of $\phi_t$ with respect to $t$ and the bound obtained at the end is independent of such regularity. Therefore, an approximation argument allows us to extend the inequality to discontinuous densities: for general $f_i, g_i$, $i=0,1$ as in the hypotheses, let $\{f_{i, j}\}_{j\in \N}$ and $\{g_{i, j}\}_{j\in \N}$ be smooth densities with the same upper and lower bounds converging to $f_i$ and $g_i$ in $L^2$, with corresponding Brenier potentials $\{\phi_{i, j}\}_{j\in \N}$ which have average zero. Since $\nabla \phi_{i, j}(\Omega)\subset \Omega_*$, we can use Arzel{\`a}--Ascoli to obtain subsequences that converge uniformly on $\Omega$ to $\phi_i$, necessarily convex; note the Legendre transforms of $\phi_{i, j}$ also uniformly converge to the Legendre transform of $\phi_i$. In particular, we can take a limit in the Kantorovich dual problem (see \cite[Theorem 5.10]{villani2009optimal_old_new}) to see $\phi_i$ is also a Brenier potential from $f_i$ to $g_i$. Thus we can take a limit in the corresponding estimates for $\phi_{i, j}$ to finish the proof of~\eqref{eq:L2_L2}.

  Finally,~\eqref{eqn: L2 grad bound} follows immediately from~\eqref{eq:L2_L2} combined with \cite[Proposition 4.1]{MerigotDela2023StabOt}.
\end{proof}

\section{Proof of \cref{thm:unif_stab}}\label{sec:uniform_stability_potentials}
Theorem \ref{thm:unif_stab} is derived as a consequence of boundary regularity estimates for elliptic equations. First, we use \cite[Theorem 5.31]{lieberman2013oblique}, which we re-write in our setting as
\begin{lemma}\label{lem:moser_estimate}
  Let $\Omega \subset \mathbb R^d$ be a bounded Lipschitz domain and let $u$ be a weak solution of
  \begin{equation*}
    \begin{cases}
  -\div\left(A\nabla u\right)=f, & \textnormal{in }\Omega,\\
  \inner{A\nabla u}{\normal} =\psi, & \textnormal{on }\partial\Omega,\\
  \int_{\Omega}u=0,&
  \end{cases}
  \end{equation*}
  where $\Lambda^{-1}\Id\leq A\leq\Lambda\Id$, $f\in L^{q}(\Omega)$ and $\psi\in L^{q-1}(\partial \Omega)$ for some $q>\frac{d}{2}$. Then
  \begin{equation*}
    \left\Vert u\right\Vert _{L^{\infty}(\Omega)}\leq C
    \left(\Vert f\Vert _{L^{q}(\Omega)}+ \Vert \psi\Vert_{L^{q-1}(\partial\Omega)}\right),
  \end{equation*}
  where $C$ depends on $\Omega$, $\Lambda$, and $q$.
\end{lemma}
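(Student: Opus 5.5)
The plan is a De Giorgi--Moser iteration adapted to the conormal (Neumann) boundary condition, which is essentially the argument behind \cite[Theorem 5.31]{lieberman2013oblique}. Two features distinguish this from the interior estimate. First, the boundary term is handled through a trace inequality on the Lipschitz domain $\Omega$. Second, the normalization $\int_\Omega u=0$ replaces the Dirichlet data, fixing the additive constant and allowing a Poincar\'e--Sobolev inequality. By linearity I may scale so that $\Vert f\Vert_{L^q}+\Vert \psi\Vert_{L^{q-1}(\partial\Omega)}=1$; it then suffices to show $\Vert u\Vert_{L^\infty}\le C$.

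The first step is the energy estimate. Testing the weak formulation with $u$ itself gives
\[
\Lambda^{-1}\int_\Omega|\nabla u|^2\le \int_\Omega fu+\int_{\partial\Omega}\psi u .
\]
I bound the right-hand side using H\"older's inequality, the Sobolev embedding $W^{1,2}(\Omega)\hookrightarrow L^{2^*}$, and the trace embedding $W^{1,2}(\Omega)\hookrightarrow L^{2(d-1)/(d-2)}(\partial\Omega)$. Both embeddings hold for bounded Lipschitz domains. The conditions $q>d/2$ and $q-1>(d-1)/2$ (the latter equivalent to the former) are exactly what make the H\"older pairings admissible. Combined with the Poincar\'e inequality for mean-zero functions, this yields $\Vert u\Vert_{W^{1,2}}\le C$.

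The second step is to run De Giorgi truncations. For a level $k\ge0$, set $w_k=(u-k)_+\in W^{1,2}(\Omega)$; this is a legitimate test function because no boundary condition needs to be imposed on it. Write $A_k=\{u>k\}$. Testing with $w_k$ gives
\[
\Lambda^{-1}\int|\nabla w_k|^2\le \int_{A_k} |f| w_k+\int_{\partial\Omega\cap \overline{A_k}}|\psi| w_k .
\]
Applying H\"older with the Sobolev and trace inequalities for $w_k$ yields
\[
\Vert w_k\Vert_{W^{1,2}}^2\le C\Big(|A_k|^{1-\frac1q-\frac1{2^*}}+\mathcal H^{d-1}(\partial\Omega\cap A_k)^{\theta}\Big)\Vert w_k\Vert_{W^{1,2}}+C\Vert w_k\Vert_{L^2}^2 .
\]
The lower-order term $\Vert w_k\Vert_{L^2}^2$ must be added because $w_k$ is not mean-zero. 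For $h>k$, I then combine $(h-k)|A_h|^{1/2^*}\le \Vert w_k\Vert_{L^{2^*}}$ with the analogous boundary bound. This gives a recursive inequality for $\varphi(k)=|A_k|+\mathcal H^{d-1}(\partial\Omega\cap A_k)^{\gamma}$ of the form $\varphi(h)\le C(h-k)^{-p}\varphi(k)^{1+\epsilon}$ with $\epsilon>0$; the positivity of $\epsilon$ is precisely where $q>d/2$ is used. Stampacchia's lemma then shows $\varphi(k_0+d_0)=0$. The initial level $k_0$ is controlled by the $L^2$ bound from the first step, since by Chebyshev $|A_{k_0}|$ is small for $k_0$ large. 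Applying the same argument to $-u$ gives the lower bound.

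I expect the main obstacle to be the lower-order $L^2$ term and the boundary measure. The $L^2$ term appears because the mean-zero condition is lost after truncation, so Sobolev is available only in the inhomogeneous form. To handle it, I would first choose $k_0$ large so that $|A_{k_0}|\le \tfrac12|\Omega|$. Then a Poincar\'e inequality for functions vanishing on a set of half measure applies to $w_k$ for every $k\ge k_0$, and it removes the $L^2$ term. The boundary measure $\mathcal H^{d-1}(\partial\Omega\cap A_k)$ must decay alongside $|A_k|$. The cleanest route is to avoid tracking it separately: I apply the trace inequality to $w_k$ directly and absorb its contribution into $\Vert w_k\Vert_{W^{1,2}}$ via H\"older against $|\psi|^{q-1}$. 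Only $|A_k|$ then enters the recursion, at the cost of a slightly smaller but still positive $\epsilon$. The constants depend only on $\Omega$ (through the Sobolev, trace and Poincar\'e constants), on $\Lambda$, and on $q$, as claimed.
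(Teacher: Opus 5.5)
The gap is in your treatment of the boundary datum $\psi$, and it cannot be closed under the stated hypotheses.

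First, the claimed equivalence is false: $q-1>\frac{d-1}{2}$ is equivalent to $q>\frac{d+1}{2}$, not to $q>\frac d2$. More importantly, $\frac{d-1}{2}$ is the wrong threshold. $\psi$ is a flux: it scales like a first derivative, not like a zeroth-order source on a $(d-1)$-dimensional set.

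Already your energy step needs $\psi\in L^{s}(\partial\Omega)$ with $s\ge\frac{2(d-1)}{d}$ to control $\int_{\partial\Omega}\psi u$ by $\Vert u\Vert_{W^{1,2}}$. This is the dual of the trace exponent $2^\#=\frac{2(d-1)}{d-2}$ (for $d\ge3$).

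In the truncation step, write $B_k=\partial\Omega\cap\{u>k\}$. Your own estimates then give
\[
\mathcal H^{d-1}(B_h)\le C(h-k)^{-2^\#}\,\mathcal H^{d-1}(B_k)^{2^\#(1-\frac1s)-1},
\]
and the exponent exceeds $1$ if and only if $s>d-1$.

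Your alternative bookkeeping does not change this. The plain trace inequality $\Vert w_k\Vert_{L^{s'}(\partial\Omega)}\le C\Vert w_k\Vert_{W^{1,2}}$ yields no decaying factor at all. If you instead apply a $W^{1,1}$-trace inequality to a power of $w_k$ to extract a power of $|A_k|$, the recursion exponent again exceeds $1$ exactly when $s>d-1$. With $s=q-1$ and only $q>\frac d2$ none of these conditions holds; for example $d=3$, $q=2$ gives $s=1$.

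This is not a defect of your bookkeeping: the statement as written is false for $\frac d2<q\le d$ when $\psi\neq 0$. Take:
\begin{itemize}
\item $\Omega=B_1\subset\R^3$, $A=\Id$, $f=0$, $q=2$;
\item $\psi_n=\rho_n-\frac{1}{4\pi}$, where the $\rho_n\ge0$ are smooth mollifiers on $\partial B_1$ with unit mass, concentrating at a point $x_0\in\partial B_1$.
\end{itemize}
Then $\int_{\partial B_1}\psi_n=0$ and $\Vert\psi_n\Vert_{L^1(\partial B_1)}\le 2$. However, the mean-zero solutions $u_n$ converge pointwise in $B_1$ to the Neumann function with pole at $x_0$ plus a bounded function. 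That Neumann function blows up like $|x-x_0|^{-1}$, so $\Vert u_n\Vert_{L^\infty}\to\infty$.

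So your argument can only be completed in one of two settings:
\begin{itemize}
\item under a hypothesis such as $\psi\in L^{s}(\partial\Omega)$ with $s>d-1$ (for instance $s=2q-1$ in this normalization), which is presumably what the cited result of Lieberman actually requires; the paper quotes it without proof;
\item for $\psi\equiv 0$.
\end{itemize}
The second case is the only one the paper ever uses: every application has homogeneous conormal data. In that case your interior De Giorgi--Stampacchia iteration is correct and gives the bound for $q>\frac d2$. This includes your device of choosing $k_0$ with $|A_{k_0}|\le\frac12|\Omega|$, so that a Poincar\'e inequality for functions vanishing on half the domain removes the $L^2$ term.
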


We will also make use of the Schauder estimate \cite[Theorem 5.54]{lieberman2013oblique}, which we write now in a simplified way that serves our purpose.
\begin{lemma}\label{lem:Schauder_estimate}
  Let $\Omega$ be a bounded $\mathcal C^{1,\alpha}$ domain, and let $u$ be a weak solution of
  \begin{equation*}
    \begin{cases}
      -\div\left(A\nabla u\right) = f, & \textnormal{in }\Omega \\ 
      \inner{A\nabla u}{\normal} = \psi,  & \textnormal{on }\partial\Omega
    \end{cases}
  \end{equation*}
  where $\Lambda^{-1}\Id\leq A\leq \Lambda \Id$, $A\in \mathcal C^{0,\alpha}(\overline \Omega)$, $f\in L^{\frac{d}{1-\alpha}}(\Omega)$ and $\psi\in \mathcal C^{0,\alpha}(\partial \Omega)$. Then
  \begin{equation*}
    \Vert u\Vert_{\mathcal C^{1,\alpha}(\overline \Omega)} \leq C\left(
    \Vert u \Vert_{L^\infty(\Omega)} + \Vert f\Vert_{L^{\frac{d}{1-\alpha}}(\Omega)}
    + \Vert \psi \Vert_{\mathcal C^{0,\alpha}(\partial \Omega)}
    \right),
  \end{equation*}
  where $C$ depends on $\Omega$, $\alpha$, $\Lambda$, and $[A]_{\mathcal C^{0,\alpha}(\overline \Omega)}$.
\end{lemma}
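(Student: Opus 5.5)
The plan is to reduce the estimate to the classical Schauder theory for divergence-form equations whose right-hand side is the divergence of a Hölder vector field, with a conormal boundary condition. This is essentially the content of \cite[Theorem 5.54]{lieberman2013oblique}. The only nonstandard feature is that $f$ lies merely in $L^{\frac{d}{1-\alpha}}$. So the first step is to absorb $f$ into a divergence.

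\textbf{Step 1: rewrite $f$ as a divergence.} Set $p=\frac{d}{1-\alpha}>d$, extend $f$ by zero to a large ball $B\supset\overline\Omega$, and let $w$ be the Newtonian potential of $f$, so that $-\Delta w=f$ in $B$. Calderón--Zygmund gives $\Vert w\Vert_{W^{2,p}(B)}\leq C\Vert f\Vert_{L^p(\Omega)}$. Morrey's embedding $W^{1,p}\hookrightarrow \mathcal C^{0,1-d/p}=\mathcal C^{0,\alpha}$ then yields $F:=\nabla w\in \mathcal C^{0,\alpha}(\overline\Omega)$ with $\Vert F\Vert_{\mathcal C^{0,\alpha}}\leq C\Vert f\Vert_{L^p}$. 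The exponent $\frac{d}{1-\alpha}$ is chosen precisely so that this embedding lands in $\mathcal C^{0,\alpha}$.

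With this, $u$ is a weak solution of
\begin{equation*}
-\div(A\nabla u - F)=0 \text{ in }\Omega,\qquad \inner{A\nabla u - F}{\normal}=\psi-\inner{F}{\normal} \text{ on }\partial\Omega,
\end{equation*}
where the new boundary datum lies in $\mathcal C^{0,\alpha}(\partial\Omega)$ because $\partial\Omega\in\mathcal C^{1,\alpha}$. This follows directly from the weak formulation: for $\theta\in H^1(\Omega)$,
\[
\int_\Omega f\theta=\int_\Omega\inner{F}{\nabla\theta}-\int_{\partial\Omega}\inner{F}{\normal}\theta .
\]

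\textbf{Step 2: Schauder estimate for the divergence-data problem.} Here I invoke the global $\mathcal C^{1,\alpha}$ estimate for conormal problems with $\mathcal C^{0,\alpha}$ coefficients, $\mathcal C^{0,\alpha}$ divergence data, and $\mathcal C^{0,\alpha}$ conormal data on a $\mathcal C^{1,\alpha}$ domain. If I were to prove it rather than cite it, I would argue as follows.
\begin{enumerate}[label=(\roman*)]
\item Flatten $\partial\Omega$ locally by a $\mathcal C^{1,\alpha}$ diffeomorphism. This preserves divergence form and the $\mathcal C^{0,\alpha}$ class of the coefficients, the vector field and the conormal data, with ellipticity constants depending on $\Lambda$ and the chart.
\item In half-balls, compare $u$ with the solution of the frozen-coefficient problem: constant matrix $A(x_0)$, zero data, and a Neumann-type condition on the flat part. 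Extend by even reflection and use the interior decay $\int_{B_r}|\nabla v-(\nabla v)_r|^2\leq C(r/R)^{d+2}\int_{B_R}|\nabla v-(\nabla v)_R|^2$.
\item Control the difference $u-v$ in energy by $[A]_{\mathcal C^{0,\alpha}}R^{\alpha}\Vert\nabla u\Vert_{L^\infty}$ together with the oscillation of $F$ and of the boundary datum, which are $O(R^{\alpha})$.
\item Iterate to obtain a Campanato estimate $\int_{B_r}|\nabla u-(\nabla u)_r|^2\leq Cr^{d+2\alpha}$, and hence $\nabla u\in\mathcal C^{0,\alpha}$ by Campanato's characterization.
\end{enumerate}
The term involving $\Vert \nabla u\Vert_{L^\infty}$ is absorbed by interpolation, $\Vert\nabla u\Vert_{L^\infty}\leq \varepsilon[\nabla u]_{\alpha}+C_\varepsilon\Vert u\Vert_{L^\infty}$. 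A finite covering of $\overline\Omega$ by interior balls and boundary charts then gives
\begin{equation*}
\Vert u\Vert_{\mathcal C^{1,\alpha}(\overline\Omega)}\leq C\left(\Vert u\Vert_{L^\infty}+\Vert F\Vert_{\mathcal C^{0,\alpha}}+\Vert\psi-\inner{F}{\normal}\Vert_{\mathcal C^{0,\alpha}(\partial\Omega)}\right).
\end{equation*}

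\textbf{Step 3: conclude.} Combining the display of Step 2 with the bounds from Step 1 gives the stated inequality, with $C$ depending only on $\Omega$, $\alpha$, $\Lambda$ and $[A]_{\mathcal C^{0,\alpha}}$.

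\textbf{Main obstacle.} I expect the main difficulty to be the boundary part of Step 2: making the flattening compatible with the conormal condition, and checking that the reflected frozen problem has the required decay. This is exactly what \cite[Theorem 5.54]{lieberman2013oblique} provides, so in practice I would cite it after Step 1 rather than redo the Campanato iteration.
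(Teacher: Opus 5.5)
Your argument is correct, and in the end it rests on the same source as the paper. The paper gives no proof of its own: it quotes \cite[Theorem 5.54]{lieberman2013oblique}, which as the paper states it already allows a source term in $L^{\frac{d}{1-\alpha}}$, so the estimate is read off directly. Your Step 1 is therefore not needed for that citation, but it is a valid and standard reduction. The integration by parts is justified since $F=\nabla w\in W^{1,p}\cap\mathcal C^{0,\alpha}$ with $p>d$, and $\inner{F}{\normal}\in\mathcal C^{0,\alpha}(\partial\Omega)$ because $\partial\Omega\in\mathcal C^{1,\alpha}$. What the reduction buys is that any Schauder estimate with only $\mathcal C^{0,\alpha}$ divergence data and $\mathcal C^{0,\alpha}$ conormal data suffices. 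It also explains the exponent $\frac{d}{1-\alpha}$ as exactly the Morrey threshold $1-\frac{d}{p}=\alpha$.

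If you ever carry out the Campanato sketch in Step 2 rather than citing, two details need fixing. First, bounding $u-v$ by $[A]_{\mathcal C^{0,\alpha}}R^{\alpha}\Vert\nabla u\Vert_{L^\infty}$ and then absorbing via interpolation presupposes $u\in\mathcal C^{1,\alpha}$, so it only gives an a priori estimate. For a weak solution you need either the usual two-stage iteration (first a Morrey bound $\int_{B_r}|\nabla u|^2\leq Cr^{d-\varepsilon}$, then the Campanato bound) or an approximation argument with smoothed coefficients and data. Second, the even reflection in (ii) should be done after a linear change of variables normalizing $A(x_0)$, so that the conormal direction becomes the normal one.
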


Now we are in position to prove Theorem \ref{thm:unif_stab}. For the rest of the section, $C$ will denote a constant which may change from line to line and that depends on $\Omega$, $\Omega_*$, $\alpha$, $\beta$, $a$, $\Vert f_i\Vert _{\mathcal{C}^{0,\beta}(\overline{\Omega})}$, and $\Vert g_i\Vert _{\mathcal{C}^{0,\beta}(\overline{\Omega}_{*})}$.

\begin{proof}[Proof of Theorem \ref{thm:unif_stab}]
  As before, we consider
  \begin{equation*}
    f_{t}=(1-t)f_0+tf_1,\qquad g_{t}=(1-t)g_0+tg_1
  \end{equation*}
  and interpolate between $\phi_0$, $\phi_1$ by letting $\phi_t$ be the Brenier potential from $f_{t}$ to $g_{t}$ with $\int_\Omega \phi_t=0$ for all $t\in [0, 1]$. Then Theorem \ref{thm:implicit_fun} shows that $\xi_{t}:=\partial_{t}\phi_{t}$ is a weak solution of
  \begin{equation*}
    \begin{cases}
    {\displaystyle -\div\left(f_{t}\left(D^{2}\phi_{t}\right)^{-1}\nabla\xi_{t}\right)=\frac{g_1(\nabla\phi_{t})-g_0(\nabla\phi_{t})}{g_{t}(\nabla\phi_{t})}f_{t}-(f_1-f_0),} & \textnormal{in }\Omega,\\
    \left\langle f_t \left(D^{2}\phi_{t}\right)^{-1}\nabla\xi_t,\normal\right\rangle =0, & \textnormal{on }\partial\Omega.
    \end{cases}
  \end{equation*}
  As done previously, we use Lemma \ref{lem: global C2 estimate} to obtain a uniform ellipticity estimate %
  \begin{equation*}
    C^{-1} \Id \leq f_t(D^2\phi_t)^{-1} \leq  C \Id,
  \end{equation*}
  then combining this with Lemma \ref{lem:prod_quotient_Holder} shows 
  \begin{equation*}
    \Vert f_{t}(D^{2}\phi_{t})^{-1}\Vert_{\mathcal{C}^{0,\alpha}(\overline{\Omega})}\leq C.
  \end{equation*}
  Therefore, we can apply Lemma \ref{lem:Schauder_estimate} followed by Lemma \ref{lem:moser_estimate} to obtain
  \begin{align*}
    \left\Vert \xi_t\right\Vert _{\mathcal{C}^{1,\alpha}(\overline \Omega)} &
    \leq C\left(\left\Vert \xi_t\right\Vert _{L^{\infty}(\Omega)}+\left\Vert \frac{g_1(\nabla\phi_{t})-g_0(\nabla\phi_{t})}{g_{t}(\nabla\phi_{t})}f_{t}-(f_1-f_0)\right\Vert _{L^{\frac{d}{1-\alpha}}(\Omega)}\right)	\\
    &\leq C\left\Vert \frac{g_1(\nabla\phi_{t})-g_0(\nabla\phi_{t})}{g_{t}(\nabla\phi_{t})}f_{t}-(f_1-f_0)\right\Vert _{L^{\frac{d}{1-\alpha}}(\Omega)}	\\
    & \le C\left(\left\Vert f_1-f_0\right\Vert _{L^{\frac{d}{1-\alpha}}(\Omega)}+\left\Vert g_1-g_0\right\Vert _{L^{\frac{d}{1-\alpha}}(\Omega_{*})}\right).
  \end{align*}
  By the above bound, we can justify exchanging integration and differentiation below to obtain
  \begin{align*}
    \left\Vert \phi_{1}-\phi_{0}\right\Vert _{\mathcal{C}^{1,\alpha}(\overline \Omega)}
    &=\left\Vert \int_{0}^{1}\xi_{t}\,dt\right\Vert _{\mathcal{C}^{1,\alpha}(\overline \Omega)}
    \leq \int_{0}^{1}\left\lVert \xi_{t}\right\rVert _{\mathcal{C}^{1,\alpha}(\overline \Omega)}\,dt\\
    &\leq C\left(\left\Vert f_1-f_0\right\Vert _{L^{\frac{d}{1-\alpha}}(\Omega)}+\left\Vert g_1-g_0\right\Vert _{L^{\frac{d}{1-\alpha}}(\Omega_{*})}\right).
    \qedhere
  \end{align*}
\end{proof}
Now we prove the statements from Remark \ref{rmk:sharpness} on the necessity of the lower bound of the target measures and the one dimensional stability estimate.
\begin{proof}[Proof of Remark \ref{rmk:sharpness}]
    Let us first show the one dimensional bound
    $$
    \left\Vert \phi_{1}'-\phi_{0}'\right\Vert _{L^{\infty}(I)}\leq\frac{1}{\varepsilon}\left(\left\Vert f_1-f_0\right\Vert _{L^{1}(I)}+\left\Vert g_1-g_0\right\Vert _{L^{1}(J)}\right),$$
    which we stated in (\ref{eq:1D_Linfty_L1}). We use that in one dimension the optimal transport map can be simply expressed as
    $$
    \phi_{i}'=G_i^{-1}\circ F_i,
    $$
    where $F_i$, $G_i$ are the cumulative distribution functions of $f_i$ and $g_i$. From here the bound is an elementary computation. Indeed, for $x_1\leq x_2$ in $J$,
    \begin{align*}
        \lvert G_i(x_1)-G_i(x_2)\rvert
        &=\int_{x_1}^{x_2}g_i\geq a\lvert x_2-x_1\rvert,
    \end{align*}
    hence the $G_i^{-1}$ are $\frac{1}{a}$-Lipschitz and we have 
    \begin{align*}
        \left|G_1^{-1}\left(F_1(x)\right)-G_0^{-1}\left(F_0(x)\right)\right| &
        \leq\left|G_1^{-1}\left(F_1(x)\right)-G_1^{-1}\left(F_0(x)\right)\right|+\left|G_1^{-1}\left(F_0(x)\right)-G_0^{-1}\left(F_0(x)\right)\right|\\
        & \le\frac{1}{a}\left\Vert F_1-F_0\right\Vert _{L^{\infty}(I)}+\left\Vert G_1^{-1}-G_0^{-1}\right\Vert _{L^{\infty}([0,1])}\\
        & \leq\frac{1}{a}\left\Vert f_1-f_0\right\Vert _{L^{1}(I)}+\left\Vert G_1^{-1}-G_0^{-1}\right\Vert _{L^{\infty}([0,1])}.
    \end{align*}
    For the second term, we note that given $y\in[0,1]$
    $$
    \left|G_1^{-1}(y)-G_0^{-1}(y)\right|=\left|G_0^{-1}\left(G_0\left(G_1^{-1}(y)\right)\right)-G_0^{-1}(y)\right|\leq\frac{1}{a}\left|G_0\left(G_1^{-1}(y)\right)-y\right|.
    $$
    Therefore
    \begin{align*}
        \left\Vert G_1^{-1}-G_0^{-1}\right\Vert _{L^{\infty}([0,1])} & 
        \leq\frac{1}{a}\sup_{y\in[0,1]}\left|G_0\left(G_1^{-1}(y)\right)-y\right|\\
        & =\frac{1}{a}\sup_{x\in J}\left|G_0(x)-G_1(x)\right|\leq\frac{1}{a}\left\Vert g_1-g_0\right\Vert _{L^{1}(J)},
    \end{align*}
    which proves (\ref{eq:1D_Linfty_L1}). 
    
    Next, we show that, if we remove the lower bound hypothesis $g_i\geq a$ from Theorem \ref{thm:unif_stab}, there exists no pair of constants $0<\eta <1$ and $C>0$ such that
    $$
    \left\Vert \nabla\phi_{1}-\nabla\phi_{0}\right\Vert _{L^{\infty}(\Omega)}\leq C\left(\left\Vert f_0-f_1\right\Vert _{L^{\infty}(\Omega)}^\eta+\left\Vert g_0-g_1\right\Vert _{L^{\infty}(\Omega_{*})}^\eta\right).
    $$
    We prove this with a one dimensional example on $\Omega = \Omega_*=(0,1)$. We set $f\equiv 1$ and
    $$
    g_{a}(x)=
    \begin{cases}
        (p+1)a^{-p}(a-x)^{p}, & 0\leq x\leq a\\
        (p+1)(1-a)^{-p}(x-a)^{p}, & a\leq x\leq1
    \end{cases}, \qquad 0<a<1.
    $$
    In this case, the optimal transport map between $f$ and $g_a$ is given by $G_a^{-1}$, where $G_a$ is the cumulative function of $g_a$. We will consider $a$ in a small neighborhood of $\frac{1}{2}$ so that, for each fixed $p>0$, the functions $g_a$ have a uniform $\mathcal C^{0,\min(p,1)}(\Omega)$ bound. A simple computation shows that
    $$
    G_{a}^{-1}(y)=
    \begin{cases}
        a\left(1-\left(1-\frac{y}{a}\right)^{\frac{1}{p+1}}\right), & 0\leq y\leq a,\\
        a+(1-a)\left(\frac{y-a}{1-a}\right)^{\frac{1}{p+1}}, & a\leq y\leq1.
    \end{cases}
    $$
    Since the $g_a$ are defined so that $G_a(a)=a$, setting $a=\frac{1}{2(1-\varepsilon)}=\frac{1}{2}+\frac{\varepsilon}{2}+O(\varepsilon^{2})$ we have
    $$
    \left\Vert G_{a}^{-1}-G_{1/2}^{-1}\right\Vert _{L^{\infty}([0,1])}\geq\left|G_{a}^{-1}(1/2)-G_{1/2}^{-1}(1/2)\right|=\left|\frac{1-\varepsilon^{\frac{1}{p+1}}}{2(1-\varepsilon)}-\frac{1}{2}\right|\gtrsim\varepsilon^{\frac{1}{p+1}}.
    $$
    On the other hand, a simple computation shows that for each fixed $p>1$ it holds
    $$
    \left\Vert g_{a}-g_{1/2}\right\Vert _{L^{\infty}([0,1])}\lesssim \varepsilon.
    $$
    Therefore, for any $\frac{1}{p+1}<\eta<1$ we have
    $$
    \limsup_{\varepsilon\to 0}\frac{\left\Vert G_{a}^{-1}-G_{1/2}^{-1}\right\Vert _{L^{\infty}([0,1])}}{\left\Vert g_{a}-g_{1/2}\right\Vert _{L^{\infty}([0,1])}^{\eta}}=+\infty.\qedhere
    $$
\end{proof}

\section{Proof of \cref{thm:2var_d2}}\label{sec:second_variation}

As a preliminary step, we show that the $2$-Monge-Kantorovich distance is twice differentiable. In particular, the formula from \cref{thm:2var_d2} may be understood pointwise.

\begin{lemma}\label{lem:d2_twice_diff}
    Let $f,g,h,k$ as in Theorem \ref{thm:2var_d2} and $f_t=f(1+th)$, $g_t=g(1+tk)$. Then $\mk_2(f_t,g_t)^2$ is twice differentiable in time for $|t|$ small enough.
\end{lemma}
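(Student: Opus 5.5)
The plan is to write the squared distance in terms of the Brenier potential and differentiate under the integral, using the time regularity from Corollary~\ref{cor:implicit_fun} and, where possible, Theorem~\ref{thm:implicit_fun}. Since $\nabla\phi_t$ is an optimal map from $f_t$ to $g_t$,
\begin{equation*}
  \mk_2(f_t,g_t)^2=\int_\Omega |\nabla\phi_t(x)-x|^2 f(x)(1+th(x))\,dx .
\end{equation*}
It is convenient to use duality instead. By Kantorovich duality with $\phi_t^*$ the Legendre transform,
\begin{equation*}
  \tfrac12\mk_2(f_t,g_t)^2=\int_\Omega \tfrac{|x|^2}{2}f_t+\int_{\Omega_*}\tfrac{|y|^2}{2}g_t-\int_\Omega\phi_t f_t-\int_{\Omega_*}\phi_t^* g_t .
\end{equation*}
The optimality of the pair $(\phi_t,\phi_t^*)$ gives an envelope formula for the first derivative:
\begin{equation*}
  \tfrac{d}{dt}\tfrac12\mk_2^2=\int_\Omega\Big(\tfrac{|x|^2}{2}-\phi_t\Big)fh+\int_{\Omega_*}\Big(\tfrac{|y|^2}{2}-\phi_t^*\Big)gk .
\end{equation*}
The terms involving $\partial_t\phi_t$ cancel because $\int\partial_t\phi_t f_t+\int\partial_t\phi_t^*\, g_t=0$. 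This follows from $\partial_t\phi_t^*(\nabla\phi_t(x))=-\partial_t\phi_t(x)$ together with the pushforward relation. The first derivative therefore involves only $\phi_t$ and $\phi_t^*$, and twice differentiability reduces to showing that $t\mapsto\phi_t$ and $t\mapsto\phi_t^*$ are $\mathcal C^1$ in $L^\infty$, with derivatives $\xi_t$ and $-\xi_t\circ(\nabla\phi_t)^{-1}$.

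\textbf{Step 1: apply Corollary~\ref{cor:implicit_fun}.} Here $f_t=f(1+th)$ and $g_t=g(1+tk)$ are affine in $t$. They are uniformly $\mathcal C^{0,\alpha}$ and bounded below by $a/2$ for $|t|$ small, and $\partial_tf_t=fh$, $\partial_tg_t=gk$ are continuous in $(t,x)$. The corollary therefore applies on a small interval around $0$ after reparametrizing, giving $\xi_t=\partial_t\phi_t$ as a weak solution of \eqref{eqn: weak PDE nonsmooth}.

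\textbf{Step 2: continuity of $t\mapsto\xi_t$ in $C^0(\overline\Omega)$.} Lemma~\ref{lem: global C2 estimate} gives $\phi_t$ uniformly bounded in $\mathcal C^{2,\alpha'}$, and Remark~\ref{rmk:ellipticity_D2phi} gives uniform ellipticity. Uniqueness of the Brenier potential and Arzel\`a--Ascoli then show $\phi_s\to\phi_t$ in $\mathcal C^2$ as $s\to t$. Lemmas~\ref{lem:moser_estimate} and~\ref{lem:Schauder_estimate} bound $\xi_t$ uniformly in $\mathcal C^{1,\alpha}$. Any subsequential limit of $\xi_s$ solves the limiting problem \eqref{eq:boundary_val_prob_implicit}, because the coefficients $(D^2\phi_s)^{-1}$ converge uniformly and the right-hand sides $(k\circ\nabla\phi_s)/(1+sk\circ\nabla\phi_s)-h/(1+sh)$, times $f_s$, converge uniformly. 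That limiting problem has a unique mean-zero solution, so $\xi_s\to\xi_t$ uniformly.

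\textbf{Step 3: the dual potential.} Using $\phi^*_t(y)=\langle y,(\nabla\phi_t)^{-1}(y)\rangle-\phi_t((\nabla\phi_t)^{-1}(y))$ and the envelope theorem, we get $\partial_t\phi_t^*(y)=-\xi_t((\nabla\phi_t)^{-1}(y))$. This is continuous in $t$ uniformly in $y$, since $(\nabla\phi_t)^{-1}$ varies continuously by Step 2 and the uniform $\mathcal C^2$ bounds.

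Combining the steps, the first derivative formula is itself differentiable in $t$, with derivative
\begin{equation*}
  -\int_\Omega \xi_t fh+\int_{\Omega_*}\xi_t\circ(\nabla\phi_t)^{-1}\,gk .
\end{equation*}
This is continuous in $t$, so $\mk_2^2$ is $\mathcal C^2$ near $0$.

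The main obstacle is Step 2: upgrading the pointwise differentiability from Corollary~\ref{cor:implicit_fun} to continuity of $t\mapsto\xi_t$. It relies on compactness together with uniqueness of the Neumann problem, which is the route I would try first. An alternative is to approximate $f,g,h,k$ by $\mathcal C^{1,\alpha}$ data and invoke Theorem~\ref{thm:implicit_fun} with $m=2$, but passing the limit through second derivatives seems harder.
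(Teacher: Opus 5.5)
Your proposal is correct and follows the paper's route: Kantorovich duality with the maximizing pair $(\phi_t,\phi_t^*)$, cancellation of the $\partial_t\phi_t$ terms to get $\frac12\frac{d}{dt}\mk_2(f_t,g_t)^2$ in terms of $\phi_t$ and $\phi_t^*$ alone, and then differentiating that expression once more. Your Step 2 (continuity of $t\mapsto\xi_t$ via compactness and uniqueness of the mean-zero Neumann solution) and your envelope-theorem computation of $\partial_t\phi_t^*$ are somewhat more careful than the paper, which simply asserts that $\nabla\phi_t$ is $\mathcal C^1$ in $t$ and differentiates $(\nabla\phi_t)^{-1}$; your version also yields $\mathcal C^2$ rather than just twice differentiability.
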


\begin{proof}
    Recall by Kantorovich duality (see \cite[Theorem 5.10]{villani2009optimal_old_new}) it holds for each $t$,
    $$
    \frac{1}{2}\mk_2(f_t, g_t)^2=\sup\left\{ \int_\Omega\left(\frac{\lvert x\rvert^2}{2}-\phi(x)\right)f_t(x)\,dx+\int_{\Omega_*}\left(\frac{\lvert y\rvert^2}{2}-\psi(y)\right)g_t(y)\,dy\right\},
    $$
    where the supremum is taken over $\phi(x)$, $\psi(y)$ satisfying $\phi(x)+\psi(y)\leq \inner{x}{y}$. Then the pair $(\phi_t, \phi_t^*)$ is a maximizer in the problem above, where $\phi_t^*$ is the Legendre transform of $\phi_t$ defined by
    \begin{align*}
        \phi_t^*(y):=\sup_{x\in \Omega}(\inner{x}{y}-\phi_t(x)).
    \end{align*}
    Moreover, if $T_t=\nabla\phi_t(x)$ is the optimal map from $f_t$ to $g_t$, then under our conditions $T_t$ is invertible and it holds for all $y\in \Omega_*$ that
    \begin{align}\label{eqn: dual potential equality}
        \phi_t^*(y)=\inner{T_t^{-1}(y)}{y}-\phi_t(T_t^{-1}(y)).
    \end{align}
    Regarding time differentiability, Corollary \ref{cor:implicit_fun} shows that $\nabla\phi_t$ is $\mathcal C^1$ with respect to $t$, and using the same argument as in the beginning of the proof of Lemma~\ref{lem:technical_d2_stab}, we see that $T_t^{-1}(y)$ is differentiable in $t$ for any $y\in \Omega_*$ and $\lvert t\rvert$ small enough, with $\partial_tT_t^{-1}(y)$ uniformly bounded for such $(t, y)$. Thus~\eqref{eqn: dual potential equality} yields that $\phi_t^*(y)$ is differentiable in $t$ for $y\in \Omega_*$, with uniformly bounded derivative, and we can justify differentiating under the integral to obtain
    \begin{align*}
        0
        &=\partial_h\vert_{h=0}\left[\int_\Omega\left(\frac{\lvert x\rvert^2}{2}-\phi_{t+h}(x)\right)f_t(x)\,dx+\int_{\Omega_*}\left(\frac{\lvert y\rvert^2}{2}-\phi_{t+h}^*(y)\right)g_t(y)\,dy\right]\\
        &=-\int_\Omega\partial_t\phi_t(x)f_t(x)\,dx-\int_{\Omega_*}\partial_t\phi_t^*(y)g_t(y)\,dy,
    \end{align*}
    where we have used that quantity being differentiated above achieves its maximum at $h=0$. Then we calculate, 
    \begin{align*}
        \frac{1}{2}\frac{d}{dt}\mk_2(f_{t},g_{t})^2 & 
        =\frac{d}{dt}\int_{\Omega}\left(\frac{|x|^2}{2}-\phi_{t}(x)\right)f_{t}(x)\,dx
        +\frac{d}{dt}\int_{\Omega_{*}}\left(\frac{|y|^2}{2}-\phi_{t}^*(y)\right)g_{t}(y)\,dy\\
        &=\int_{\Omega}\left(\frac{|x|^2}{2}-\phi_{t}(x)\right)\partial_{t}f_{t}(x)\,dx
        +\int_{\Omega_{*}}\left(\frac{|y|^2}{2}-\phi_{t}^*(y)\right)\partial_{t}g_{t}(y)\,dy\\
            & -\int_{\Omega}\partial_{t}\phi_{t}(x)f_{t}(x)\,dx-\int_{\Omega_{*}}\partial_{t}\phi_{t}^*(y)g_{t}(y)\,dy\\
            & =\int_{\Omega}\left(\frac{|x|^2}{2}-\phi_{t}(x)\right)h(x)f(x)\,dx
            +\int_{\Omega_{*}}\left(\frac{|y|^2}{2}-\phi_{t}^*(y)\right)k(y)g(y)\,dy.
    \end{align*}
    We can see the last expression above is also differentiable in $t$, proving the claim.
\end{proof}

\begin{proof}[Proof of \cref{thm:2var_d2}]
First we assume $f$, $g$, $h$, $k$ are all $\mathcal C^\infty$. Note that for $|t|<\delta$ small enough we have that $f_t = (1+th)f$, $g_t = (1+tk)g$ are bounded away from zero, so we can apply Theorem \ref{thm:implicit_fun} and conclude that for any $0<\beta <\alpha$ the mapping $t\mapsto \phi_t\in \mathcal C^{2,\beta}(\overline \Omega)$ is $\mathcal C^2$. This also implies that $\phi\in \mathcal C^{2}\big((-\delta,\delta)\times \overline \Omega\big)$ as a function of $t$, $x$ and that
\begin{equation*}
  t\mapsto \nabla \phi_t \in \mathcal C^{1,\beta}(\overline \Omega), \qquad
  t\mapsto D^2\phi_t\in \mathcal C^{0,\beta}(\overline \Omega;\mathbb R^{d\times d})
\end{equation*}
are twice differentiable as curves on Banach spaces. This justifies the application of the dominated convergence theorem in the subsequent computations. 

To obtain the exact expression in~\eqref{eq:second_var_thm}, we compute the discrete difference
\begin{align*}
  \frac{\mk_2(f_{t},g_{t})^2+\mk_2(f_{-t},g_{-t})^2-2\mk_2(f,g)^2}{t^{2}} &
  =\frac{1}{t^{2}}\int_{\Omega}\left|\nabla\phi_{t}-x\right|^{2}f_{t}\,dx	\\
  & +\frac{1}{t^{2}}\int_{\Omega}\left|\nabla\phi_{-t}-x\right|^{2}f_{-t}\,dx-\frac{2}{t^{2}}\int_{\Omega}\left|\nabla\phi-x\right|^{2}f\,dx\\
  & =\int_{\Omega}\underbrace{\frac{\left|\nabla\phi_{t}-x\right|^{2}+\left|\nabla\phi_{-t}-x\right|^{2}-2\left|\nabla\phi-x\right|^{2}}{t^{2}}}_{I}\,f\,dx\\
  & +\int_{\Omega}\frac{\left|\nabla\phi_{t}-x\right|^{2}-\left|\nabla\phi_{-t}-x\right|^{2}}{t}hf\,dx.
\end{align*}
After some algebraic manipulations we have
\begin{equation*}
  I=\inner{\frac{(\nabla\phi_{t}-\nabla \phi)+(\nabla \phi-\nabla\phi_{-t})}{t}}{\frac{\nabla\phi_{t}-\nabla\phi}{t}}+\inner{\nabla\phi_{-t}+\nabla\phi-2x}{\frac{\nabla\phi_{t}+\nabla\phi_{-t}-2\nabla\phi}{t^{2}}},
\end{equation*}
and we similarly obtain
\begin{equation*}
  \frac{\left|\nabla\phi_{t}-x\right|^{2}-\left|\nabla\phi_{-t}-x\right|^{2}}{t}
  =\inner{\nabla\phi_{t}+\nabla\phi_{-t}-2x}{\frac{(\nabla\phi_{t}-\nabla \phi)+(\nabla \phi-\nabla\phi_{-t})}{t}}.
\end{equation*}
Therefore, letting $t\to 0$ and denoting $\xi=\partial_{t}|_{t=0}\phi_t$, we deduce
\begin{align} \nonumber
  \left.\frac{d^{2}}{dt^{2}}\right|_{t=0}\mk_2(f_{t},g_{t})^2 &
  = 4\int_{\Omega}\inner{\nabla\phi(x)-x}{\nabla\xi(x)}h(x)f(x)\,dx
  + 2\int_{\Omega}\left|\nabla\xi\left(x\right)\right|^{2}f(x)\,dx	\\
  & +2\int_{\Omega}\inner{\nabla\phi(x)-x}{\left.\frac{d^{2}}{dt^{2}}\right|_{t=0}\nabla\phi_{t}(x)}f(x)\,dx
  \label{eq:missing_term_2var_OT}
\end{align}
Next, we compute
\begin{equation*}
  \int_{\Omega}\inner{\nabla\eta\left(\nabla\phi(x)\right)}{\left.\frac{d^{2}}{dt^{2}}\right|_{t=0}\nabla\phi_{t}(x)}f(x)\,dx,
\end{equation*}
where here $\eta\in \mathcal C^2(\overline \Omega_*)$ is a general test function. To do this we note that
\begin{align*}
  0 & =\int_{\Omega_*}\frac{g_{t}(y)+g_{-t}(y)-2g(y)}{t^{2}}\eta(y)\,dy	\\
  & =\frac{1}{t^{2}}\int_{\Omega}\eta\left(\nabla\phi_{t}(x)\right)f_{t}(x)\,dx
  +\frac{1}{t^{2}}\int_{\Omega}\eta\left(\nabla\phi_{-t}(x)\right)f_{-t}(x)\,dx
  -\frac{2}{t^{2}}\int_{\Omega}\eta\left(\nabla\phi(x)\right)f(x)\,dx\\
  & =\int_{\Omega}\frac{\eta\left(\nabla\phi_{t}(x)\right)+\eta\left(\nabla\phi_{-t}(x)\right)-2\eta\left(\nabla\phi(x)\right)}{t^{2}}f(x)\,dx
  \\
  &\qquad\qquad+\int_{\Omega}\frac{\big(\eta\left(\nabla\phi_{t}(x)\right)-\eta\left(\nabla\phi(x)\right)\big)
  +\big(\eta\left(\nabla\phi(x)\right)-\eta\left(\nabla\phi_{-t}(x)\right)\big)}{t}h(x)f(x)\,dx.
\end{align*}
Letting $t\to0$ yields
\begin{align*}
  0 &	=\int_{\Omega}\left.\frac{d^{2}}{dt^{2}}\right|_{t=0}\left[\eta\left(\nabla\phi_{t}(x)\right)\right]f(x)\,dx
  +2\int_{\Omega}\left.\frac{d}{dt}\right|_{t=0}\left[\eta\left(\nabla\phi_{t}(x)\right)\right]h(x)f(x)\,dx\\
  & =\int_{\Omega}\left\langle D^{2}\eta\left(\nabla\phi(x)\right)\nabla\xi\left(x\right),\nabla\xi\left(x\right)\right\rangle f(x)\,dx
  +\int_{\Omega}\inner{\nabla\eta\left(\nabla\phi(x)\right)}{\left.\frac{d^{2}}{dt^{2}}\right|_{t=0}\nabla\phi_{t}(x)}f(x)\,dx\\
  & \,+2\int_{\Omega}\inner{\nabla\eta\left(\nabla\phi(x)\right)}{\nabla\xi\left(x\right)}h(x)f(x)\,dx,
\end{align*}
which shows that
\begin{align*}
  \int_{\Omega}\inner{\nabla\eta\left(\nabla\phi(x)\right)}{\left.\frac{d^{2}}{dt^{2}}\right|_{t=0}\nabla\phi_{t}(x)}f(x)\,dx &
  =-\int_{\Omega}\left\langle D^{2}\eta\left(\nabla\phi(x)\right)\nabla\xi\left(x\right),\nabla\xi\left(x\right)\right\rangle f(x)\,dx	\\
  & \,-2\int_{\Omega}\inner{\nabla\eta\left(\nabla\phi(x)\right)}{\nabla\xi\left(x\right)}h(x)f(x)\,dx.
\end{align*}
Now we can set ${\eta(y)= \frac{1}{2}|y|^2-\psi(y)}$ where $\psi$ is a conjugate Brenier potential from $g$ to $f$ satisfying ${\nabla \psi = (\nabla \phi)^{-1}}$ to obtain
\begin{align*}
  \int_{\Omega}\inner{\nabla\phi(x)-x}{\left.\frac{d^{2}}{dt^{2}}\right|_{t=0}\nabla\phi_{t}(x)}f(x)\,dx &
  =-\int_{\Omega}\left\langle \left(\Id-D^{2}\psi\left(\nabla\phi(x)\right)\right)\nabla\xi(x),\nabla\xi(x)\right\rangle f(x)\,dx	\\
  & \,-2\int_{\Omega}\inner{\nabla\phi(x)-\nabla\psi\left(\nabla\phi(x)\right)}{\nabla\xi(x)}h(x)f(x)\,dx\\
  & =-\int_{\Omega}\left\langle \left(\Id-\big(D^{2}\phi(x)\big)^{-1}\right)\nabla\xi(x),\nabla\xi(x)\right\rangle f(x)\,dx\\
  & \,-2\int_{\Omega}\inner{\nabla\phi(x)-x}{\nabla\xi(x)}h(x)f(x)\,dx.
\end{align*}
Plugging this into (\ref{eq:missing_term_2var_OT}) yields 
\begin{equation*}
  \left.\frac{d^{2}}{dt^{2}}\right|_{t=0}\mk_2(f_{t},g_{t})^2
  =2\int_{\Omega}\inner{\left(D^{2}\phi(x)\right)^{-1}\nabla\xi(x)}{\nabla\xi(x)} f(x)\,dx,
\end{equation*}
which proves Theorem \ref{thm:2var_d2} for smooth densities.

Next, we prove Theorem \ref{thm:2var_d2} assuming only $f$, $h\in \mathcal C^{0,\alpha}(\overline \Omega)$, $g$, $k\in \mathcal C^{0,\alpha}(\overline \Omega_*)$ through an approximation argument. There exist linear and bounded extension operators
$$
\mathcal E:\mathcal C^{0,\alpha}(\overline \Omega)\longrightarrow C^{0,\alpha}(\mathbb R^d), \qquad
\mathcal E_*:\mathcal C^{0,\alpha}(\overline \Omega_*)\longrightarrow C^{0,\alpha}(\mathbb R^d),
$$
constructed as in $\mathcal{E}_0$ in \cite[Section 2.2 (8)]{stein1970singular}. 
Therefore, without loss of generality, we may assume that $f,g,h,k\in\mathcal{C}^{0,\alpha}(\mathbb{R}^{d})$ with bounds of the form ${\left\Vert f\right\Vert _{\mathcal{C}^{0,\alpha}(\mathbb{R}^{d})}\leq c_d\left\Vert f\right\Vert _{\mathcal{C}^{0,\alpha}(\overline \Omega)}}$. Then given an approximation kernel $m^\varepsilon\in \mathcal C^\infty_c(\mathbb{R}^d)$, we set
\begin{equation*}
  f^{\varepsilon}=c_{\varepsilon,f}(f*m^{\varepsilon}),\quad g^{\varepsilon}=c_{\varepsilon,g}(g*m^{\varepsilon}),\quad h^{\varepsilon}=h*m^{\varepsilon}-c_{\varepsilon,h},\quad k^{\varepsilon}=k*m^{\varepsilon}-c_{\varepsilon,k},
\end{equation*}
where the constants are chosen so that $f^{\varepsilon},g^{\varepsilon}$ are probability densities on $\Omega,\Omega_{*}$ and
\begin{equation*}
  \int_{\Omega}h^{\varepsilon}f^{\varepsilon}=\int_{\Omega_{*}}k^{\varepsilon}g^{\varepsilon}=0.
\end{equation*}
Choosing $\varepsilon>0$ small enough it is easy to see that $f^{\varepsilon},g^{\varepsilon}$ will be bounded from below by $a/2$, in which case we can define the curve of densities 
\begin{equation*}
  f_{t}^{\varepsilon}:=(1+th^{\varepsilon})f^{\varepsilon},\qquad g_{t}^{\varepsilon}:=(1+tk^{\varepsilon})g^{\varepsilon}
\end{equation*}
for $\left|t\right|<\delta$  sufficiently small. Let $\phi^\varepsilon_t$ be the Brenier potential from $f^\varepsilon_t$ to $g^\varepsilon_t$ with average zero and $\xi^\varepsilon_t:=\partial_t\phi^\varepsilon_t$. Since the densities are smooth, we can apply the previous computations to $\phi_t$ replaced by $\phi^\varepsilon_{t+u}$ for $\lvert u\rvert$ sufficiently small to obtain
\begin{equation*}
  \left.\frac{d^{2}}{dt^{2}}\right|_{t=u}\mk_2(f_{t}^{\varepsilon},g_{t}^{\varepsilon})^2=2\int_{\Omega}\left\langle \left(D^{2}\phi_{u}^{\varepsilon}\right)^{-1}\nabla\xi_{u}^{\varepsilon},\nabla\xi_{u}^{\varepsilon}\right\rangle f_{u}^{\varepsilon}.
\end{equation*}
 Now we let $\varepsilon\to 0$ after re-writing the previous expression in integral form. In general, if $\varphi:[-t,t]\longrightarrow\mathbb{R}$ is a $\mathcal{C}^{2}$ function, the second order quotient can be re-written as
\begin{equation*}
  \frac{\varphi(t)+\varphi(-t)-2\varphi(0)}{t^{2}}
  =2\int_{0}^{1}\int_{0}^{1}s\varphi''\left((2\tau-1)st\right)\,d\tau\,ds.
\end{equation*}
Denoting $u_{t}(\tau,s):=(2\tau-1)st\in[-t,t]$, we see $\lvert u_t(\tau, s)\rvert$ can be made small uniformly in $\tau$, $s\in [0, 1]$ by taking $\lvert t\rvert$ small, thus applying the above integral identity to $t\mapsto \mk_2(f_{t}^{\varepsilon},g_{t}^{\varepsilon})^2$ we obtain
\begin{align*}
  &\frac{\mk_2(f_{t}^{\varepsilon},g_{t}^{\varepsilon})^2+\mk_2(f_{-t}^{\varepsilon},g_{-t}^{\varepsilon})^2-2\mk_2(f_0^{\varepsilon},g_0^{\varepsilon})^2}{t^{2}}\\
  &=4\int_{0}^{1}\int_{0}^{1}\int_{\Omega}s\left\langle \left(D^{2}\phi_{u_{t}(\tau,s)}^{\varepsilon}\right)^{-1}\nabla\xi_{u_{t}(\tau,s)}^{\varepsilon},\nabla\xi_{u_{t}(\tau,s)}^{\varepsilon}\right\rangle f_{u_{t}(\tau,s)}^{\varepsilon}\,d\tau\,ds.
\end{align*}
In order to pass to the limit we obtain estimates uniform in $\varepsilon$ and $t$. First, Lemma \ref{lem: global C2 estimate} implies that for $\beta<\alpha$,
\begin{equation*}
  \left\Vert \phi_{t}^{\varepsilon}\right\Vert _{\mathcal{C}^{2,\beta}(\overline{\Omega})}\leq C,
\end{equation*}
where $C$ depends on $\left\Vert f_{t}^{\varepsilon}\right\Vert _{\mathcal{C}^{0,\alpha}(\overline{\Omega})}$, $\left\Vert g_{t}^{\varepsilon}\right\Vert _{\mathcal{C}^{0,\alpha}(\overline{\Omega}_{*})}$ , and the lower bounds of $f^{\varepsilon}$ and $g^{\varepsilon}$. Restricting the time interval if necessary and choosing $\varepsilon$ small enough, all these quantities are controlled by the bounds of $f_{t}$ and $g_{t}$. Consequently, we obtain
\begin{equation*}
  \left\Vert \phi_{t}^{\varepsilon}\right\Vert _{\mathcal{C}^{2,\beta}(\overline{\Omega})}\leq C,
  \qquad \Lambda^{-1}\Id\leq D^{2}\phi_{t}^{\varepsilon}\leq\Lambda\Id
\end{equation*}
with constants independent of $t$, $\varepsilon$. Then classical elliptic regularity estimates (see Lemmas \ref{lem:moser_estimate} and \ref{lem:Schauder_estimate}) give uniform bounds on $\Vert \xi_t^\varepsilon\Vert_{\mathcal C^{1,\beta}(\overline \Omega)}$ which allows us to take $\varepsilon\to 0$ in the weak formulation of the PDE satisfied by $\xi_t^\varepsilon$ using the Arzelà--Ascoli Theorem. In summary, as $\varepsilon\to0$ we have the following convergence, uniformly for $\lvert u\rvert$ small:
\begin{equation*}
  \left(D^{2}\phi_{u}^{\varepsilon}\right)^{-1}\to\left(D^{2}\phi_{u}\right)^{-1},\qquad f_{u}^{\varepsilon}\to f_{u},\qquad\nabla\xi_{u}^{\varepsilon}\to\nabla\xi_{u}.
\end{equation*}
Therefore, letting $\varepsilon\to0$ and using the dominated convergence theorem gives
\begin{align*}
  &\frac{\mk_2(f_{t},g_{t})^2+\mk_2(f_{-t},g_{-t})^2-2\mk_2(f_0,g_0)^2}{t^{2}} \\
  &=
  4\int_{0}^{1}\int_{0}^{1}\int_{\Omega}s\left\langle \left(D^{2}\phi_{u_{t}(\tau,s)}\right)^{-1}\nabla\xi_{u_{t}(\tau,s)},\nabla\xi_{u_{t}(\tau,s)}\right\rangle f_{u_{t}(\tau,s)}\,d\tau\,ds.
\end{align*}
Given the uniform bounds on $\left\Vert \phi_{t}\right\Vert _{\mathcal{C}^{2,\beta}(\overline{\Omega})}$, $\left\Vert \xi_{t}\right\Vert _{\mathcal{C}^{1,\beta}(\overline{\Omega})}$, a simple compactness argument shows that $D^{2}\phi_{t}$, $\nabla\xi_{t}$ are continuous in $t$, so we can let $t\to0$ and the dominated convergence theorem gives
\begin{align*}
  \left.\frac{d^{2}}{dt^{2}}\right|_{t=0}\left[\mk_2(f_{t},g_{t})^2\right] 
  =4\int_0^1sds \int_{\Omega}\left\langle \left(D^{2}\phi\right)^{-1}\nabla\xi,\nabla\xi\right\rangle f\,dx
  =2\int_{\Omega}\left\langle \left(D^{2}\phi\right)^{-1}\nabla\xi,\nabla\xi\right\rangle f\,dx,
\end{align*}
which finishes the proof of Theorem \ref{thm:2var_d2}.
\end{proof}
\section{Proof of \cref{thm:implicit_fun}}\label{sec:time_reg_ifc}
Under our hypotheses for Theorem~\ref{thm:implicit_fun}, by \cite[Theorem 1.1]{chenLiuWang2021MAglobalReg} and bootstrapping standard elliptic regularity theory, Brenier potentials from $f_t$ to $g_t$ belong to $\mathcal{C}^{3, \alpha}(\overline\Omega)$ and satisfy equation~\eqref{eqn: MA eqn}. We will apply an implicit function theorem to show time regularity of Brenier potentials with average zero; the first step is to write \eqref{eqn: MA eqn} in a way that allows for linearization. Again we let $\omega_*$ be a convex defining function for $\Omega_*$ as in~\eqref{eqn: MA bdry cond}, in particular $\nabla\omega_*(y)=\normal_*(y)$ is the outer normal of $\Omega_*$. Now define the mapping
\begin{align*}
  \Gamma: & \,[0,1]\times\mathcal{C}_{u}^{2,\beta}(\overline{\Omega})\longrightarrow\mathcal{C}^{0,\beta}(\overline{\Omega})\times\mathcal{C}^{1,\beta}\left(\partial\Omega\right)	\\
  \Gamma(t,\phi) & =\left(\Gamma_{t}^{(1)}(\phi),\Gamma_{t}^{(2)}(\phi)\right):=\left(\log\det D^{2}\phi+\log g_{t}\left(\nabla\phi\right)-\log f_{t},\omega_{*}\left(\nabla\phi\right)\right),
\end{align*}
where $0<\beta <\alpha$ and $\mathcal{C}_{u}^{2,\beta}(\overline{\Omega})$ denotes the subset of $\mathcal{C}^{2,\beta}(\overline{\Omega})$ consisting of uniformly convex functions.
\begin{remark}
    Note that we have set the Hölder regularity of the densities to be $\mathcal C^{0,\alpha}$, but we consider $\Gamma$ defined between spaces defined for $0<\beta<\alpha$. This is a technical detail that allows us to show that $\nabla\phi\in \mathcal C^{1,\beta}(\overline \Omega)\mapsto g_t(\nabla \phi)\in \mathcal C^{0,\beta}(\overline \Omega)$ is differentiable as a map between Banach spaces using Lemma \ref{lem:reg_of_comp2}.
\end{remark}
First, we show regularity of $\Gamma$.

\begin{lemma}\label{lem:Gamma_Cm}
  If $\phi\in \mathcal C^{2,\beta}_{u}(\overline \Omega)$, then $\Gamma(t, \phi)\in \mathcal{C}^{0,\beta}(\overline{\Omega})\times\mathcal{C}^{1,\beta}\left(\partial\Omega\right)$. Moreover, under the hypotheses of Theorem \ref{thm:implicit_fun}, we have that $\Gamma$ is $m$ times continuously Frechet differentiable as a map $[0,1]\times\mathcal{C}_{u}^{2,\beta}(\overline{\Omega})\longrightarrow\mathcal{C}^{0,\beta}(\overline{\Omega})\times\mathcal{C}^{1,\beta}\left(\partial\Omega\right)$ with differential with respect to $\phi$
  \begin{equation*}
    (D_{\phi}\Gamma)(t, \phi)\xi=\left(\textnormal{tr}\left[(D^{2}\phi)^{-1}D^{2}\xi\right]+\frac{\inner{\nabla g_t(\nabla\phi)}{\nabla\xi}}{g_t(\nabla\phi)},\inner{\normal_*(\nabla\phi)}{\nabla\xi}\right).
  \end{equation*}
\end{lemma}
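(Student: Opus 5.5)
I will decompose $\Gamma$ into elementary building blocks and check each of them separately. The first block is the map $\phi\mapsto D^2\phi$, which is bounded and linear from $\mathcal C^{2,\beta}(\overline\Omega)$ to $\mathcal C^{0,\beta}(\overline\Omega;\mathbb R^{d\times d})$. The second is $\phi\mapsto\nabla\phi$, bounded and linear into $\mathcal C^{1,\beta}(\overline\Omega;\mathbb R^d)$. The third is $t\mapsto f_t$, $t\mapsto g_t$, which are $\mathcal C^m$ by hypothesis.

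\textbf{First component.} The set $\mathcal C^{2,\beta}_u(\overline\Omega)$ is open in $\mathcal C^{2,\beta}(\overline\Omega)$, since uniform convexity is preserved under small $\mathcal C^2$ perturbations. On uniformly convex $\phi$, the matrix $D^2\phi$ takes values in a compact subset of the positive definite cone. The map $M\mapsto\log\det M$ is real analytic on that cone. Because $\mathcal C^{0,\beta}$ is a Banach algebra, composition with an analytic function (a Nemytskii operator) is smooth on $\mathcal C^{0,\beta}$. Its differential is $\tr[M^{-1}\,\cdot\,]$, and this gives the term $\tr[(D^2\phi)^{-1}D^2\xi]$.

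Similarly, $f_t\ge a$ and $f_t\in\mathcal C^{1,\alpha}\subset \mathcal C^{0,\beta}$, so $t\mapsto\log f_t$ is $\mathcal C^m$ into $\mathcal C^{0,\beta}(\overline\Omega)$ by composing with the smooth map $s\mapsto\log s$ on $[a,\infty)$.

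\textbf{The main obstacle} is the term $(t,\phi)\mapsto \log g_t(\nabla\phi)$, which is a composition of two variable functions. For this I will invoke the composition lemmas of Appendix~\ref{sec:reg_comp}, in particular Lemma~\ref{lem:reg_of_comp2} (de la Llave--Obaya). Since $g_t\in\mathcal C^{m,\alpha}(\overline\Omega_*)$ with $m\ge1$, the map
$$(G,\Phi)\mapsto G\circ\Phi,\qquad \mathcal C^{m,\alpha}(\overline\Omega_*)\times \mathcal C^{1,\beta}(\overline\Omega;\Omega_*)\to\mathcal C^{0,\beta}(\overline\Omega)$$
is $\mathcal C^m$. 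This uses the loss $\beta<\alpha$, which is exactly why the remark introduces it. Its derivative in $\Phi$ is $\langle\nabla G\circ\Phi,\cdot\rangle$.

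One subtlety is that $\nabla\phi$ must map into $\overline\Omega_*$, where $g_t$ is defined. I will handle this by first extending $g_t$ to a neighborhood of $\overline\Omega_*$ with a bounded linear extension operator, as in the Stein-type extension used earlier. A linear extension preserves $\mathcal C^m$ dependence on $t$. Composing with the $\mathcal C^m$ curve $t\mapsto \log g_t$ (which is valid since $g_t\ge a$ near $\overline\Omega_*$ after restricting to a small neighborhood) and with $\phi\mapsto\nabla\phi$, the chain rule gives $\mathcal C^m$ regularity. The differential in $\phi$ is $\langle\nabla g_t(\nabla\phi),\nabla\xi\rangle/g_t(\nabla\phi)$.

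\textbf{Second component.} The boundary term $\omega_*(\nabla\phi)|_{\partial\Omega}$ has the same structure. Here $\omega_*$ is fixed and $\mathcal C^{m+1,\alpha}$, matching the boundary regularity of $\Omega_*$, so it can be taken $\mathcal C^{m+1,\alpha}$ near $\partial\Omega_*$. The trace of $\nabla\phi$ lies in $\mathcal C^{1,\beta}(\partial\Omega)$, and composition with a fixed $\mathcal C^{m+1,\alpha}$ function is $\mathcal C^m$ from $\mathcal C^{1,\beta}$ into $\mathcal C^{1,\beta}$ (again by Lemma~\ref{lem:reg_of_comp2} or Lemma~\ref{lem:reg_of_comp1}). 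Restriction to $\partial\Omega$ is bounded and linear. The derivative is $\langle\nabla\omega_*(\nabla\phi),\nabla\xi\rangle=\langle\normal_*(\nabla\phi),\nabla\xi\rangle$.

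\textbf{Conclusion.} Summing the pieces and using the chain rule in Banach spaces shows that $\Gamma$ is $\mathcal C^m$ jointly in $(t,\phi)$, and it yields the stated formula for $D_\phi\Gamma$.
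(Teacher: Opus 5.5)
Your proposal is correct and follows essentially the same route as the paper: $\Gamma$ is split into the bounded linear maps $\phi\mapsto(\nabla\phi,D^2\phi)$, the $\mathcal C^m$ curves $t\mapsto f_t$ and $t\mapsto\mathcal E_0 g_t$ (after a Stein extension of $g_t$ to a convex neighborhood $\Omega_*^\delta$ on which $\mathcal E_0 g_t>a/2$), and composition operators handled by Lemma~\ref{lem:reg_of_comp2} with $r=1+\beta$, $s=m+\alpha$, $t=\beta$, which is exactly where the loss $\beta<\alpha$ enters. The differences are cosmetic. You apply $\log$ to $g_t$ before composing with $\nabla\phi$ rather than after, and you treat $\log\det$ by a Banach-algebra Nemytskii argument rather than by the composition lemmas; both orderings are justified by the same lemma.
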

\begin{proof}
    The formula for $D_\phi\Gamma$ is well known and can be easily obtained by setting $\phi_s = \phi+s\xi$ and differentiating with respect to $s$. Thus it is sufficient to show $\Gamma$ is $m$ times continuously differentiable. 
    
    We first show the domain and co-domain of $\Gamma$ are as claimed. 
    Given a fixed $\phi\in \mathcal C^{2,\beta}_{u}(\overline \Omega)$, there is a constant $0<\Lambda$ such that $\Lambda^{-1} < D^2\phi(x)< \Lambda$ for all $x\in \overline \Omega$. Also note that the function $\log \det$ belongs to $\mathcal C^\infty(\mathcal{M}_{\Lambda})$ with all derivatives bounded when restricted to the convex set
  \begin{equation*}
    \mathcal{M}_{\Lambda}:= \left\{ A\in\mathbb{R}^{d\times d}\mid \Lambda^{-1}\,\Id<A<\Lambda\,\Id\right\},
  \end{equation*}
  therefore $\log \det D^2\phi\in \mathcal C^{0,\beta}(\overline \Omega)$ as a consequence of Lemma \ref{lem:reg_of_comp1}. Similarly, using that $f_t$, $g_t$ are bounded from above and below, Lemma \ref{lem:reg_of_comp1} shows that $\log g_t(\nabla \phi)\in \mathcal C^{1,\beta}(\overline \Omega)$, $\log f_t\in \mathcal C^{m,\beta}(\overline \Omega)$ and $\omega_*(\nabla\phi)\in\mathcal{C}^{1,\beta}(\partial\Omega)$.

  Let us now show regularity of $\Gamma$. In order to verify the hypothesis of Lemma \ref{lem:reg_of_comp2}, we consider the linear and bounded extension operator
    $$
    \mathcal{E}_0:\mathcal C^{m,\alpha}(\overline \Omega_*)\longrightarrow C^{m,\alpha}(\mathbb R^d).
    $$
    as constructed in \cite[Section 2.2 (8)]{stein1970singular}. We can then view $\Gamma=\mathcal{G}\circ \mathcal{F}$ as a composition of two mappings defined by
  \begin{align*}
      \mathcal{F}: I\times\mathcal{C}_{u}^{2,\beta}(\overline{\Omega}) 
      & \to\mathcal{C}^{1,\beta}(\overline{\Omega};\mathbb{R}^{d})
      \times\mathcal{C}^{0,\beta}(\overline{\Omega};\mathbb{R}^{d\times d}_{\textnormal{sym}})\times
      \mathcal{C}^{1,\alpha}(\overline{\Omega})\times
      \mathcal{C}^{m,\alpha}(\overline {\Omega_*^\delta})\notag\\
      \mathcal{F}(t,\phi) :&=(\nabla\phi,D^{2}\phi,f_{t},\mathcal{E}_0 g_{t}|_{\Omega_*^\delta}),
  \end{align*}
  where $\mathbb{R}^{d\times d}_{\textnormal{sym}}$ denotes the set of symmetric matrices, and $\Omega_*^\delta$ is a convex neighborhood of $\overline\Omega_*$ small enough to guarantee that $\mathcal{E}_0g_t > \frac{a}{2}$ on $\Omega_*^\delta$, and
  \begin{align*}
      &\mathcal{G}:\mathcal U
      \times\mathcal{C}_{>0}^{0,\beta}(\overline{\Omega};\mathbb{R}^{d\times d}_{\textnormal{sym}})
      \times\mathcal{C}_{>0}^{1,\alpha}(\overline{\Omega})
      \times\mathcal{C}_{>0}^{m,\alpha}(\overline {\Omega_*^\delta}) 
      \to\mathcal{C}^{0,\beta}(\overline{\Omega})\times\mathcal{C}^{1,\beta}(\partial\Omega)\\
      &\mathcal{G}(p,H,f,g) :=\left(\log\det H+\log(g\circ p)-\log f,\omega_{*}(p)\vert_{\partial\Omega}\right),
  \end{align*}
  where $\mathcal{C}_{>0}^{m,\alpha}(\overline {\Omega_*^\delta})$ is the subset of 
    $\mathcal{C}^{m,\alpha}(\overline {\Omega_*^\delta})$ of functions bounded away from $0$, 
    $\mathcal{C}_{>0}^{0,\beta}(\overline{\Omega};\mathbb{R}^{d\times d}_{\textnormal{sym}})$ is the subset of 
    $\mathcal{C}^{0,\beta} (\overline{\Omega};\mathbb{R}^{d\times d}_{\textnormal{sym}})$ which are positive definite uniformly over $\overline{\Omega}$, and
    $\mathcal U$ is the open subset of $p\in \mathcal{C}^{1,\beta}(\overline{\Omega};\mathbb{R}^{d})$ with $p(\overline \Omega)\subset \Omega_*^{\delta/2}$.
  
    It is easy to see that $\mathcal F$ is $m$ times differentiable, since $t\mapsto f_t,g_t$ are differentiable by hypothesis and the maps $\phi\mapsto (\nabla \phi, D^2\phi)$, $g\mapsto \mathcal{E}_0g|_{\Omega_*^\delta}$ are linear (hence infinitely Frechet differentiable). Note that $\mathcal{F}$ takes values in the set
    $$
    \mathcal U
      \times\mathcal{C}_{>0}^{0,\beta}(\overline{\Omega};\mathbb{R}^{d\times d}_{\textnormal{sym}})\times
      \mathcal{C}_{>0}^{1,\alpha}(\overline{\Omega})\times
      \mathcal{C}_{>0}^{m,\alpha}(\overline {\Omega_*^\delta}),
    $$
    therefore, by Lemma~\ref{lem:reg_of_comp1}, it is sufficient to show that $\mathcal{G}$ 
  is $m$ times differentiable, which we can do term by term.

  To this end, we now show that the mapping
  \begin{equation}\label{eq:log_map}
      (p,g)\in\left[\mathcal{U}\subset\mathcal{C}^{1,\beta}(\overline{\Omega};\mathbb{R}^{d})\right]\times\mathcal{C}_{>0}^{m,\alpha}(\overline{\Omega_{*}^{\delta}})
  \mapsto\log(g\circ p)\in\mathcal{C}^{0,\beta}(\overline{\Omega})
  \end{equation}
  is $m$ times Frechet differentiable. To do so, we view it as the composition $\mathcal L\circ \mathcal H$, where
  \begin{align*}
  \mathcal{H}&:\left[\mathcal{U}\subset\mathcal{C}^{1,\beta}(\overline{\Omega};\mathbb{R}^{d})\right]\times\mathcal{C}_{>0}^{m,\alpha}(\overline{\Omega_{*}^{\delta}})
  \longrightarrow\mathcal{C}_{>0}^{0,\beta}(\overline{\Omega}),
  \qquad \mathcal{H}(p,g)=g\circ p,\\
  \mathcal{L}&: \mathcal{C}_{>0}^{0,\beta}(\overline{\Omega})\longrightarrow\mathcal{C}^{0,\beta}(\overline{\Omega}),
  \qquad \mathcal{L}(h)=\log h.
  \end{align*}
  Note that $\mathcal H$ is Frechet differentiable due to Lemma \ref{lem:reg_of_comp2} statement \eqref{item:comp_reg} applied with $E=F=\mathbb R^d$, $G=\mathbb R$, $U=\Omega$, $V=\Omega_*^\delta$, $r=1+\beta$, $t=\beta$, $s=m+\alpha$. As for $\mathcal L$, we note that we can decompose
  $$
  \mathcal C_{>0}^{0,\beta}(\overline \Omega) = \cup_{\varepsilon>0}\mathcal V_\varepsilon,\qquad
  \mathcal V_\varepsilon:=\{f\in \mathcal C^{0,\beta}(\overline \Omega)\mid \varepsilon<f<\varepsilon^{-1}\},
  $$
  and it is enough to show that $\mathcal L$ is $m$ times Frechet differentiable on each open set $\mathcal V_\varepsilon$. For $\varepsilon>0$ fixed, we may restrict the logarithm to a sub-interval of $(0,\infty)$ where all its derivatives are bounded. Then Lemma \ref{lem:reg_of_comp2}, statement \eqref{item:g*_comp} with $E=\mathbb R^d$, $F=G=\mathbb R$, $U=\Omega$, $V=(\frac{\varepsilon}{2},\frac{2}{\varepsilon})$, $\mathcal U$ replaced by $\mathcal V_\varepsilon$, $r=t=\beta$, and $s=\infty$ shows that $\mathcal L$ is infinitely Frechet differentiable. All in all, this shows that \eqref{eq:log_map} is $m$ times Frechet differentiable

  The terms $p\mapsto \omega_*(p)$, $H\mapsto \log \det H$ from $\mathcal G$ are treated analogously.
\end{proof}
By Remark~\ref{rmk:ellipticity_D2phi}, for a fixed $t$ the first coordinate of $D_\phi \Gamma(t, \phi_t)$ is a uniformly elliptic operator. Moreover, repeated applications of Lemmas \ref{lem:reg_of_comp1} and \ref{lem:prod_quotient_Holder} show that $D_\phi\Gamma(t, \phi_t)$ is a bounded linear operator.
To apply the implicit function theorem we need to show that, given a fixed $t$, the equation $D_\phi \Gamma(t, \phi_t)\xi=(p,q)$ has exactly one solution for each $p\in \mathcal C^{0,\beta}(\overline \Omega)$, ${q\in \mathcal C^{1,\beta}(\partial \Omega)}$. We have the following simplification:

\begin{lemma}\label{lem:magic_lemma}
  Given a fixed time $t$ and $p\in \mathcal C^{0,\beta}(\overline \Omega)$, $q\in \mathcal C^{1,\beta}(\partial \Omega)$, the linearized Monge--Amp{\`e}re equation $D_\phi \Gamma(t, \phi_t)\xi=(p,q)$ is equivalent to the boundary value problem
  \begin{equation*}
    \begin{cases}
      \displaystyle{\div\left[f_t(D^{2}\phi_t)^{-1}\nabla\xi\right]=pf_t}, & \textnormal{in }\Omega, \\ 
      \displaystyle{\inner{(D^{2}\phi_t)^{-1}\nabla\xi}{\normal} =\left|(D^{2}\phi_t)^{-1}\normal\right|q}, & \textnormal{on }\partial\Omega.
    \end{cases}
  \end{equation*}
\end{lemma}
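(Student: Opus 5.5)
The plan is to treat the interior equation and the boundary condition separately, in each case rewriting the corresponding component of $D_\phi\Gamma(t,\phi_t)\xi$ from Lemma~\ref{lem:Gamma_Cm} using the Monge--Amp\`ere equation and the boundary condition satisfied by $\phi_t$. Throughout I write $M:=D^2\phi_t$. As noted at the beginning of this section, $\phi_t\in\mathcal C^{3,\alpha}(\overline\Omega)$ and $f_t\in\mathcal C^{1,\alpha}(\overline\Omega)$, so all derivatives below exist classically. By Remark~\ref{rmk:ellipticity_D2phi}, $M$ is uniformly positive definite. The first component of $D_\phi\Gamma$ is defined for $\xi\in\mathcal C^{2,\beta}(\overline\Omega)$, so the equivalence is to be read for such $\xi$, with $\div\left[f_t M^{-1}\nabla\xi\right]$ understood pointwise.

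\textbf{Interior equation.} For a symmetric matrix field $B$, I would expand
\begin{equation*}
\div\left[f_tM^{-1}\nabla\xi\right]=f_t\,\tr\left[M^{-1}D^2\xi\right]+\inner{\div\left(f_tM^{-1}\right)}{\nabla\xi},
\end{equation*}
where the divergence of a matrix is taken row-wise. The key step is to compute $\div(f_tM^{-1})$.

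\begin{itemize}
\item Using \eqref{eqn: MA eqn}, write $f_tM^{-1}=g_t(\nabla\phi_t)\,\det M\, M^{-1}=g_t(\nabla\phi_t)\,\mathrm{cof}(M)$.
\item Apply the Piola identity: the cofactor matrix of a Hessian is row-wise divergence free, which is valid since $\phi_t\in\mathcal C^3$.
\item This gives
\begin{equation*}
\div(f_tM^{-1})=\mathrm{cof}(M)\,\nabla\big(g_t(\nabla\phi_t)\big)=\mathrm{cof}(M)\,M\,\nabla g_t(\nabla\phi_t)=\det M\,\nabla g_t(\nabla\phi_t).
\end{equation*}
\item Using \eqref{eqn: MA eqn} once more, $\det M\,\nabla g_t(\nabla\phi_t)=f_t\,\dfrac{\nabla g_t(\nabla\phi_t)}{g_t(\nabla\phi_t)}$.
\end{itemize}

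Substituting, $\div[f_tM^{-1}\nabla\xi]=f_t\cdot\big(\text{first component of }D_\phi\Gamma(t,\phi_t)\xi\big)$. Since $f_t\ge a>0$, the first component equals $p$ if and only if $\div[f_tM^{-1}\nabla\xi]=pf_t$.

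\textbf{Boundary condition.} The second component is $\inner{\normal_*(\nabla\phi_t)}{\nabla\xi}=q$, so I would identify $\normal_*(\nabla\phi_t(x))$ for $x\in\partial\Omega$.

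\begin{itemize}
\item By \eqref{eqn: MA bdry cond}, $\omega_*\circ\nabla\phi_t$ vanishes on $\partial\Omega$. Moreover it is negative in $\Omega$, because $\nabla\phi_t(\Omega)=\Omega_*$.
\item Hence its gradient at $x\in\partial\Omega$ is a nonnegative multiple of $\normal(x)$. That gradient is $M\nabla\omega_*(\nabla\phi_t)=M\normal_*(\nabla\phi_t)$, which is nonzero because $M$ is invertible and $|\normal_*|=1$.
\item So $M\normal_*(\nabla\phi_t)=\lambda\normal$ with $\lambda>0$. Equivalently $\normal_*(\nabla\phi_t)=\lambda M^{-1}\normal$, and the condition $|\normal_*|=1$ forces $\lambda=|M^{-1}\normal|^{-1}$.
\item Using the symmetry of $M^{-1}$, $\inner{\normal_*(\nabla\phi_t)}{\nabla\xi}=\inner{M^{-1}\nabla\xi}{\normal}\big/\left|M^{-1}\normal\right|$.
\end{itemize}

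Therefore the second component equals $q$ if and only if $\inner{M^{-1}\nabla\xi}{\normal}=|M^{-1}\normal|\,q$.

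The main obstacle is the interior identity, specifically recognizing that $\div(f_tM^{-1})$ reproduces exactly the first-order term $f_t\nabla g_t(\nabla\phi_t)/g_t(\nabla\phi_t)$ through the Piola identity. This requires the third-derivative regularity of $\phi_t$ quoted at the start of the section. The boundary step is elementary once the sign of the multiplier $\lambda$ is fixed by the fact that $\omega_*\circ\nabla\phi_t<0$ inside $\Omega$.
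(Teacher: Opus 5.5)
Your proposal is correct and follows essentially the same route as the paper. The boundary condition is rewritten through the level-set function $\omega_*\circ\nabla\phi_t$ exactly as in the paper's proof, and you additionally justify the positivity of the multiplier, which the paper leaves implicit. Your Piola-identity computation $\div\bigl(f_t(D^2\phi_t)^{-1}\bigr)=f_t\nabla g_t(\nabla\phi_t)/g_t(\nabla\phi_t)$ is an organized version of the ``calculus exercise'' of differentiating the Monge--Amp\`ere equation that the paper invokes, relying on the same $\mathcal C^3$ regularity of $\phi_t$.
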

\begin{proof}
  According to Lemma \ref{lem:Gamma_Cm}, $D_\phi \Gamma(t, \phi_t)\xi=(p,q)$ corresponds to the problem
  \begin{equation*}
    \begin{cases}
      \displaystyle{\textnormal{tr}\left[(D^{2}\phi_t)^{-1}D^{2}\xi\right]+\frac{\inner{\nabla g_t(\nabla\phi_t)}{\nabla\xi}}{g_t(\nabla\phi_t)} = p},
      & \textnormal{in }\Omega, \\ 
      \inner{\normal_*(\nabla\phi_t)}{\nabla\xi} = q, & \textnormal{on }\partial\Omega.
    \end{cases}
  \end{equation*}
  Let us first re-write the boundary condition. Consider the function $H(x):=\omega_{*}\left(\nabla\phi_{t}(x)\right)$, which verifies $H(x)<0$ for $x\in\Omega$ and $H\equiv0$ on $\partial\Omega$. Then, if we fix $x_{0}\in\partial\Omega$, for any direction $\tau$  tangential to $\partial\Omega$  at $x_{0}$ we have 
  \begin{equation*}
    0=\partial_{\tau}H(x_{0})=\inner{D^{2}\phi_{t}(x_{0})\normal_{*}\left(\nabla\phi_{t}(x_{0})\right)}{\tau}.
  \end{equation*}
  As a consequence, we see that 
  \begin{equation*}
    D^{2}\phi_{t}(x_{0})\normal_{*}\left(\nabla\phi_{t}(x_{0})\right)=c\normal(x_{0})
  \end{equation*}
  for some $c\in\mathbb{R}$, $c\neq 0$ depending on $x_{0}$. This implies
  \begin{equation*}
    \normal_{*}\left(\nabla\phi_{t}(x)\right)=\frac{\left(D^{2}\phi_{t}(x)\right)^{-1}\normal(x)}{\left|\left(D^{2}\phi_{t}(x)\right)^{-1}\normal(x)\right|},
  \end{equation*}
  which shows the equivalence between the boundary conditions. Next, observe that
  \begin{equation}\label{eq:magic_eq}
    f_t\,\left(\textnormal{tr}\left[(D^{2}\phi_t)^{-1}D^{2}\xi\right]+\frac{\inner{\nabla g_t(\nabla\phi_t)}{\nabla\xi}}{g_t(\nabla\phi_t)}\right)=\div\left[f_t(D^{2}\phi_t)^{-1}\nabla\xi\right],
  \end{equation}
  which is shown as a simple calculus exercise after differentiating the Monge--Amp{\`e}re equation in the $x$ variable. Since $\phi_t\in \mathcal C^3(\overline \Omega)$, there are no regularity issues.
\end{proof}
Now we are ready to study the uniqueness of the linearized equation.

\begin{lemma}
  Consider a fixed $t$ and $p\in \mathcal C^{0,\beta}(\overline \Omega)$, $q\in \mathcal C^{1,\beta}(\partial \Omega)$. Then the equation $D_\phi \Gamma(t, \phi_t)\xi=(p,q)$ has a solution if and only if
  \begin{equation}\label{eq:compat_cond}
    \int_{\Omega}pf_t=\int_{\partial\Omega}\left|(D^{2}\phi_t)^{-1}\normal\right|qf_t.
  \end{equation}
  Moreover, this solution is unique up to additive constants.
\end{lemma}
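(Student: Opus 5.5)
By Lemma~\ref{lem:magic_lemma}, the equation $D_\phi\Gamma(t,\phi_t)\xi=(p,q)$ is equivalent to the conormal problem
\begin{equation*}
\div\left[f_t(D^2\phi_t)^{-1}\nabla\xi\right]=pf_t \ \text{ in }\Omega,\qquad \inner{f_t(D^2\phi_t)^{-1}\nabla\xi}{\normal}=f_t\left|(D^2\phi_t)^{-1}\normal\right|q \ \text{ on }\partial\Omega .
\end{equation*}
Here we multiplied the boundary condition by $f_t>0$, which is harmless. I will therefore study this divergence-form Neumann problem with coefficient matrix $A:=f_t(D^2\phi_t)^{-1}$. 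This matrix is uniformly elliptic by Remark~\ref{rmk:ellipticity_D2phi} and the bounds $a\le f_t\le \sup f_t$. Since $\phi_t\in\mathcal C^{3,\alpha}(\overline\Omega)$, it is also $\mathcal C^{1,\beta}$ up to the boundary. Set $\psi:=f_t|(D^2\phi_t)^{-1}\normal|q$. Because $\partial\Omega$ is smooth enough, $\psi\in\mathcal C^{0,\beta}(\partial\Omega)$; in fact $\psi\in\mathcal C^{1,\beta}(\partial\Omega)$ once $\partial\Omega$ has a $\mathcal C^{2,\beta}$ normal.

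\emph{Necessity.} If $\xi\in\mathcal C^{2,\beta}(\overline\Omega)$ solves the problem, integrate the interior equation over $\Omega$ and apply the divergence theorem. This gives $\int_\Omega pf_t=\int_{\partial\Omega}\inner{A\nabla\xi}{\normal}=\int_{\partial\Omega}|(D^2\phi_t)^{-1}\normal|qf_t$, which is \eqref{eq:compat_cond}.

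\emph{Uniqueness.} Suppose $\xi_1,\xi_2$ are two solutions and let $w=\xi_1-\xi_2$. Then $w$ solves the homogeneous problem with $p=0$ and $q=0$. Testing with $w$ gives $\int_\Omega\inner{A\nabla w}{\nabla w}=0$. Ellipticity then forces $\nabla w=0$, and since $\Omega$ is connected, $w$ is constant.

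\emph{Existence under \eqref{eq:compat_cond}.} Work in $\mathcal H:=\{\theta\in H^1(\Omega):\int_\Omega\theta=0\}$ with the weak formulation
\begin{equation*}
\int_\Omega\inner{A\nabla\xi}{\nabla\theta}=-\int_\Omega pf_t\theta+\int_{\partial\Omega}\psi\theta \qquad\text{for all }\theta\in H^1(\Omega).
\end{equation*}
On $\mathcal H$ the bilinear form is coercive by the Poincar\'e--Wirtinger inequality. The right-hand side is a bounded functional by the trace theorem. Lax--Milgram therefore yields $\xi\in\mathcal H$ satisfying the identity for all $\theta\in\mathcal H$. Condition \eqref{eq:compat_cond} says exactly that the right-hand side vanishes on constants, and the left-hand side vanishes on constants too. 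Hence the identity holds for all $\theta\in H^1(\Omega)$.

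\emph{Regularity.} It remains to upgrade $\xi$ to $\mathcal C^{2,\beta}(\overline\Omega)$, so that it lies in the domain of $D_\phi\Gamma$. First, Lemma~\ref{lem:moser_estimate} and Lemma~\ref{lem:Schauder_estimate} give $\xi\in\mathcal C^{1,\beta}(\overline\Omega)$. Next, since $A\in\mathcal C^{1,\beta}$, the equation can be rewritten in nondivergence form as $\tr[AD^2\xi]+\inner{\div A}{\nabla\xi}=pf_t$ with $\mathcal C^{0,\beta}$ coefficients. The boundary operator $\inner{A\nabla\xi}{\normal}$ is uniformly oblique, since $\inner{A\normal}{\normal}\ge\Lambda^{-1}$, and its data is $\mathcal C^{1,\beta}$. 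The oblique-derivative Schauder theory (\cite[Theorem 6.31]{gilbarg_trudinger1977elliptic}, or \cite{lieberman2013oblique}) then yields $\xi\in\mathcal C^{2,\beta}(\overline\Omega)$.

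I expect this regularity upgrade to be the main technical obstacle. One must check that the boundary data has the required $\mathcal C^{1,\beta}$ regularity, which needs $\partial\Omega\in\mathcal C^{2,\beta}$, and that the weak solution coincides with the classical one. The latter I would settle by first solving the classical problem via Fredholm alternative, then invoking the uniqueness argument above.
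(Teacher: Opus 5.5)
Your proposal is correct and follows the paper's route. You reduce via Lemma~\ref{lem:magic_lemma} to the divergence-form conormal problem, obtain necessity of \eqref{eq:compat_cond} from the divergence theorem, and obtain existence from Lax--Milgram on mean-zero functions plus elliptic regularity; your energy argument for uniqueness and your $\mathcal C^{2,\beta}$ upgrade fill in steps that the paper only asserts. The caveat you flag is genuine and is equally implicit in the paper's one-line appeal to elliptic regularity: the standard oblique-derivative Schauder theory giving $\xi\in\mathcal C^{2,\beta}(\overline\Omega)$ needs $\partial\Omega$ of class $\mathcal C^{2,\beta}$, which is more than the $\mathcal C^{1,1}$ assumed in Theorem~\ref{thm:implicit_fun}, though it holds in the settings of the main theorems.
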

\begin{proof}
  The previous Lemma \ref{lem:magic_lemma} tells us that $D_\phi \Gamma(t,\phi_t)\xi=(p,q)$ corresponds to the problem 
  \begin{equation*}
    \begin{cases}
      \displaystyle{\div\left[f_t(D^{2}\phi_t)^{-1}\nabla\xi\right]=pf_t}, & \textnormal{in }\Omega \\ 
      \displaystyle{\left\langle (D^{2}\phi_t)^{-1}\nabla\xi,\normal\right\rangle =\left|(D^{2}\phi_t)^{-1}\normal\right|q}, & \textnormal{on }\partial\Omega.
    \end{cases}
  \end{equation*}
  It is easy to see that (\ref{eq:compat_cond}) is a necessary condition, as 
  \begin{align*}
    \int_{\Omega}pf_{t}\,dx & 
    =\int_{\Omega}\div\left[f_{t}\left(D^{2}\phi_{t}\right)^{-1}\nabla\xi\right]\,dx	\\
    & =\int_{\partial\Omega}\inner{\left(D^{2}\phi_{t}\right)^{-1}\nabla\xi}{\normal} f_{t}\,dx
    =\int_{\partial\Omega}\left|\left(D^{2}\phi_{t}\right)^{-1}\normal\right|qf_{t}\,dx.
  \end{align*}
  Existence of solutions is then a simple consequence of the Lax-Milgram Theorem and elliptic regularity theory.
\end{proof}
Now we are ready to prove differentiability of $\phi_t$ with respect to $t$.

\begin{proof}[Proof of Theorem \ref{thm:implicit_fun}]
    The previous analysis shows that $D_\phi\Gamma (t, \phi_t)$ defines an isomorphism $\mathcal{X}\longrightarrow\mathcal Y_t$ where
\begin{align*}
  \mathcal{X} &	:=\left\{ \xi\in\mathcal{C}^{2,\beta}(\overline{\Omega})\mid\int_{\Omega}\xi =0\right\}  \\
  \mathcal{Y}_{t} & :=\left\{ (p,q)\in\mathcal{C}^{0,\beta}(\overline{\Omega})\times\mathcal{C}^{1,\beta}(\partial\Omega)\mid\int_{\Omega}pf_{t}=\int_{\partial\Omega}\left|(D^{2}\phi_{t})^{-1}\normal\right|qf_{t}\right\} .
\end{align*}
This is not enough to apply the implicit function theorem directly, but we  are able to apply it to a modified operator. Without loss of generality, let us show that $t\mapsto \phi_t$ is a smooth map in a neighborhood of $t=0$. To do this we define
\begin{equation*}
  \tilde{\Gamma}:[0,1]\times\left(\mathcal{C}_{>0}^{2,\beta}(\overline{\Omega})\cap\mathcal{X}\right)\longrightarrow\mathcal{Y}_{0},
  \quad \tilde{\Gamma}^{(1)}(t, \phi)=\Gamma_{t}^{(1)}(\phi)+C_{\phi},
  \quad \tilde{\Gamma}^{(2)}(t, \phi)=\Gamma_{t}^{(2)}(\phi),
\end{equation*}
where $\tilde{\Gamma}=(\tilde{\Gamma}^{(1)}, \tilde{\Gamma}^{(2)})$, and the constant $C_\phi\in \mathbb R$ is chosen so $\tilde{\Gamma}(t, \phi)\in\mathcal{Y}_{0}$. In other words,
\begin{equation*}
  \tilde{\Gamma}^{(1)}(t,\phi)=
  \Gamma_{t}^{(1)}(\phi)+\int_{\partial\Omega}\left|(D^{2}\phi_{0})^{-1}\normal\right|
    \Gamma_{t}^{(2)}(\phi)f_0-\int_{\Omega}\Gamma_{t}^{(1)}(\phi)f_0.
\end{equation*}
This functional is now different from the original $\Gamma$, so we must check if it verifies all the hypothesis of the implicit function theorem. Its $\mathcal{C}^m$-smoothness is a simple consequence of the $\mathcal{C}^m$-smoothness of $\Gamma$. Also, given $\xi\in\mathcal{X}$, we have
\begin{equation*}
    D_{\phi}\tilde \Gamma^{(1)}(t, \phi)\xi=D_{\phi}\Gamma_{t}^{(1)}(\phi)\xi+
    \int_{\partial\Omega}\left|(D^{2}\phi_{0})^{-1}\normal\right|\big(D_{\phi}\Gamma_{t}^{(2)}(\phi)\xi\big) f_0
    -\int_{\Omega}\big(D_{\phi}\Gamma_{t}^{(1)}(\phi)\xi\big) f_0.
\end{equation*}
Note that $D_\phi \Gamma (t, \phi_t)\xi\in \mathcal Y_t$, so the last two expressions cancel out if we set $\phi=\phi_0$, $t=0$. Therefore, we have 
\begin{equation*}
  D_\phi\tilde\Gamma(0, \phi_0) = D_\phi\Gamma(0, \phi_0)|_{\mathcal{X}},
\end{equation*}
which shows that $D_\phi\tilde\Gamma(0, \phi_0)$ is an isomorphism and that the implicit function theorem can be applied. This implies the existence of some $\delta>0$ and a smooth mapping
\begin{equation*}
  t\in (-\delta,\delta) \mapsto \tilde \phi_t\in \mathcal C^{2,\beta}_{>0}(\overline \Omega),
\end{equation*}
such that $\tilde\Gamma(t,\tilde \phi_t)\equiv (0, 0)$. As a last step, we show that we actually have $\tilde \phi_t = \phi_t$. We know that for each $t$,
\begin{equation*}
  \det D^{2}\tilde\phi_t(x)=C_t\frac{f_{t}(x)}{g_{t}\big(\nabla\tilde\phi_{t}(x)\big)},\qquad\nabla\tilde\phi_t(\Omega)=\Omega_*.
\end{equation*}
for some constant $C_t>0$. However, using the change of variables $y=\nabla\tilde\phi_t(x)$
\begin{equation*}
  C_t=\int_{\Omega}C_tf_t(x)\,dx=\int_{\Omega}g_t\left(\nabla\tilde\phi_t(x)\right)\det D^{2}\tilde\phi_t(x)\,dx=\int_{\Omega_{*}}g_t(y)\,dy=1,
\end{equation*}
which implies $\tilde \phi_t = \phi_t$ by uniqueness of solutions, finishing the proof of Theorem \ref{thm:implicit_fun}.
\end{proof}

Next, we use an approximation argument to see how Theorem \ref{thm:implicit_fun} may be applied without directly proving that $f_t$, $g_t$ are smooth curves in a Banach space.

\begin{proof}[Proof of Corollary \ref{cor:implicit_fun}]
First let the linear and bounded extension operators
\begin{equation*}
  \mathcal{E}:\mathcal{C}^{0,\gamma}(\overline{\Omega})\longrightarrow\mathcal{C}^{0,\gamma}(\mathbb{R}^{d}),\qquad\mathcal{E}_{*}:\mathcal{C}^{0,\gamma}(\overline{\Omega}_{*})\longrightarrow\mathcal{C}^{0,\gamma}(\mathbb{R}^{d})
\end{equation*}
correspond to $\mathcal{E}_0$ as constructed in \cite[Section 2.2 (8)]{stein1970singular}; these operators are defined independent of $\gamma\in (0, 1]$. Additionally, the extended function can be expressed as a finite sum outside of the original domain, hence differentiation in $t$ commutes with either extension operator. Thus we may assume $f_t$, $g_t$ are defined on $\R^d$ with $f_t$, $g_t\geq \frac{a}{2}$ on a neighborhood of $\overline\Omega$, $\overline\Omega_*$, such that
\begin{align}\label{eqn: extended bounded}
\sup_{t\in (0, 1)}\max\left\{ \Vert f_t\Vert_{\mathcal C^{0,\alpha}(\R^d)},\Vert \partial_tf_t\Vert_{L^\infty(\R^d)},
\Vert g_t\Vert_{\mathcal C^{0,\alpha}(\R^d)}, \Vert \partial_tg_t\Vert_{L^\infty(\R^d)}\right\}<\infty,
\end{align}
and with $\partial_tf_t$, $\partial_tg_t$ are continuous on $(0, 1)\times \R^d$. 
We further extend $f_t$, $g_t$ to $t\in \R$ by setting $f_t=f_0$ for $t<0$ and $f_t=f_1$ for $t>1$, and similarly for $g_t$. Next, let ${\eta\in\mathcal{C}_{c}^{\infty}\left(\mathbb{R}\times\mathbb{R}^{d}\right)}$, be nonnegative, define its rescaling $\eta^{\varepsilon}(t, x)=\varepsilon^{-(d+1)}\eta(t/\varepsilon, x/\varepsilon)$, and let 
\begin{align*}
f_{t}^{\varepsilon}(x):&=\left(\int_\Omega (f*\eta^{\varepsilon})(t, y)dy\right)^{-1}(f*\eta^{\varepsilon})(t, x),\\ 
g_{t}^{\varepsilon}(x):&=\left(\int_{\Omega_*} (g*\eta^{\varepsilon})(t, y)dy\right)^{-1}(g*\eta^{\varepsilon})(t, x),
\end{align*}
so that $f_{t}^{\varepsilon}$, $g_{t}^{\varepsilon}$ are probability densities on $\Omega$, $\Omega_{*}$ for each $t\in [0, 1]$; above the convolutions are taken in both time and space. Then for each $0<t<1$ we have that $f_{t}^{\varepsilon}$, $g_{t}^{\varepsilon}$ are bounded from below by $\frac{a}{2}$ on $\Omega$ and $\Omega_*$ respectively, and belong to $\mathcal C^{\infty}$ in $(t, x)\in \R\times \R^d$. 

Then, one can see that $t\mapsto f_t^\varepsilon$, $t\mapsto g_t^\varepsilon$ are infinitely Frechet differentiable as curves in any space $\mathcal C^{k,\gamma}$ (see Proposition \ref{prop:Frechet_diff} for the case $k=0$, and note the argument can be extended to any $k\in \mathbb N$.) In particular, we satisfy the hypotheses of Theorem \ref{thm:implicit_fun} with $m=1$. Thus if $\phi_{t}^{\varepsilon}$ is the Brenier potential from $f^\varepsilon_t$ to $g^\varepsilon_t$ with integral zero, it is differentiable with respect to $t$. Fixing $t\in (0, 1)$, by integration by parts with~\eqref{eq:boundary_val_prob_implicit}, $\xi_{t}^{\varepsilon}:=\partial_{t}\phi_{t}^{\varepsilon}$ satisfies for any test function $\theta\in \mathcal  C^\infty(\overline \Omega)$,
\begin{align}
\begin{split}
    \int_\Omega \inner{(D^{2}\phi^\varepsilon_t)^{-1}\nabla\xi^\varepsilon_t}{\nabla \theta}f^\varepsilon_tdx&=
        \int_\Omega\theta\left(\frac{\partial_{t}g^\varepsilon_t\left(\nabla\phi^\varepsilon_t\right)}{g^\varepsilon_t\left(\nabla\phi^\varepsilon_t\right)}-\frac{\partial_{t}f^\varepsilon_t}{f^\varepsilon_t}\right)f^\varepsilon_tdx\\
        &=\int_\Omega\left(\partial_{t}g^\varepsilon_t(\theta\circ\left(\nabla\phi^\varepsilon_t\right)^{-1})-\theta\partial_tf^\varepsilon_t\right)dx
\end{split}
 \label{eqn: mollified pde}
\end{align}
Let $\psi^\varepsilon_t$ be the Brenier potential from $g^\varepsilon_t$ to $f^\varepsilon_t$ which is zero at some designated point in $\Omega_*$ for all $\varepsilon$, then by Lemma \ref{lem: global C2 estimate} we obtain 
\begin{align*}
  \left\lVert \phi_{t}^{\varepsilon}\right\rVert _{\mathcal{C}^{2,\alpha}(\overline{\Omega})},\ \left\lVert \psi_{t}^{\varepsilon}\right\rVert _{\mathcal{C}^{2,\alpha}(\overline{\Omega}_*)}\leq C,
\end{align*}
where $C$ depends on $\left\Vert f_{t}^{\varepsilon}\right\Vert _{\mathcal{C}^{0,\alpha}(\overline{\Omega})}$, $\left\Vert g_{t}^{\varepsilon}\right\Vert _{\mathcal{C}^{0,\alpha}(\overline{\Omega}_{*})}$, and the lower bound of $f_{\varepsilon},g_{\varepsilon}$, which by construction are independent of $\varepsilon>0$. Thus we can apply Arzel{\`a}--Ascoli to find subsequences which, for any $0<\beta<\alpha$, converge as $\varepsilon\to 0$ in $\lVert\cdot\rVert_{\mathcal{C}^{2,\beta}(\overline{\Omega})}$ to $\phi_t$ and $\lVert\cdot\rVert_{\mathcal{C}^{2,\beta}(\overline{\Omega}_{*})}$ to $\psi_t$ respectively, which are clearly Brenier potentials from $f_t$ to $g_t$ and $g_t$ to $f_t$ respectively (for example, use that $\phi_t$ and $\psi_t$ are convex as a limit of convex functions, then \cite[Exercise 2.17]{villani2009optimal_old_new} combined with the uniqueness in \cite[Theorem 1.3]{Brenier91}). Since $\nabla \psi^\varepsilon_t=(\nabla \phi^\varepsilon_t)^{-1}$ and $D^2 \psi^\varepsilon_t=(D^2 \phi^\varepsilon_t)^{-1}$ by, this also yields
\begin{align*}
 \Lambda^{-1}\Id\leq (D^{2}\phi_{t}^{\varepsilon})^{-1}\leq\Lambda\Id
\end{align*}
for $\Lambda>0$ independent of $\varepsilon$. Then by Lemmas \ref{lem:moser_estimate} and \ref{lem:Schauder_estimate} with~\eqref{eqn: mollified pde}, we can find a subsequence such that $\xi^\varepsilon_t$ converges in $\lVert\cdot\rVert_{\mathcal{C}^{1,\beta}(\overline{\Omega})}$ for any $\beta <\alpha$, and the limit can be seen as $\xi_t:=\partial_t\phi_t$ by taking $\varepsilon\to 0$ in the expression
\begin{equation*}
  \int_0^1\int_{\Omega}\xi^{\varepsilon}(t, x)\theta(t,x)\,dx\,dt
  =-\int_0^1\int_{\Omega}\phi^{\varepsilon}(t, x)\partial_{t}\theta(t, x)\,dx\,dt,
\end{equation*}
where $\theta\in\mathcal{C}_{c}^{\infty}\left((0, 1)\times\Omega\right)$ is an arbitrary test function. Thus the left hand side of~\eqref{eqn: mollified pde} converges to the left hand side of~\eqref{eqn: weak PDE nonsmooth} 
as $\varepsilon\to 0$. By the regularity of $\partial_tf_t$, the sequence $\partial_tf^\varepsilon_t$ converges pointwise to $\partial_tf_t$ on $\overline\Omega$ (viewing $\{t\}\times \overline\Omega$ as a compact subset of $(0, 1)\times \R^d$), and similarly for $\partial_tg^\varepsilon_t$. Thus by the bound~\eqref{eqn: extended bounded}, we may apply dominated convergence and let $\varepsilon\to 0$ in~\eqref{eqn: mollified pde}, which yields the weak formulation of~\eqref{eqn: weak PDE nonsmooth} as $\varepsilon\to 0$, finishing the proof.
\end{proof}

\appendix
\section{Regularity of compositions}\label{sec:reg_comp}
All results stated here are taken straight out of \cite{llave1999_reg_comp} with some simplifications. The main goal is to study the regularity of composition of maps in terms of the regularity of each factor.

\textbf{Basic definitions and notation.} We will consider Banach spaces $E$, $F$, $G$ and maps
\begin{equation*}
  f:U\subset E\longrightarrow F,\qquad g:V\subset F\longrightarrow G,
\end{equation*}
where $U$, $V$ are open sets so that $f(U)\subset V$. Hölder spaces will be denoted with only one real index so that $\mathcal C^{k,\alpha}\equiv \mathcal C^{k+\alpha}$ for $k\in \mathbb N$, $0<\alpha <1$. This notation makes it easier to state the required results.

We remind the reader that a map $f:U\subset E\longrightarrow F$ is said to be \textit{Fréchet differentiable} at $x\in U$ if there is a bounded linear operator $df(x):E\longrightarrow F$ such that
\begin{equation*}
  \lim_{t\to0}\frac{\left\Vert f(x+th)-f(x)-tdf(x)h\right\Vert_F }{t}=0.
\end{equation*}
Then, by induction, $f$ is said to be $n$ times differentiable if $x\mapsto df(x)$ is $n-1$ times differentiable and we set $d^nf = d(d^{n-1}f)$, where $d^nf(x):E^n\longrightarrow F$ is identified with a multilinear map.

When $n$ is a natural number, we define $\mathcal C^n(U)$ as the set of $n$-times continuously differentiable maps $f:U\subset E\longrightarrow F$ and equip it with the norm
\begin{equation*}
  \left\Vert f\right\Vert _{\mathcal{C}^{n}(U)}=\sum_{k=0}^{n}\sup_{x}\left\Vert d^{k}f(x)\right\Vert_F.
\end{equation*}
When $r>0$ is a nonnegative number of the form $r=n+\alpha$, $n\in \mathbb{N}$, $0<\alpha <1$, we define $\mathcal C^r(U)$ as the set subspace of $\mathcal C^n(U)$ such that the $n$-th derivative is $\alpha$-Hölder continuous. We equip $\mathcal C^r(U)$ with the norm 
\begin{equation*}
  \left\Vert f\right\Vert _{\mathcal{C}^{r}(U)}=\left\Vert f\right\Vert _{\mathcal{C}^{n}(U)}+\sup_{x\neq y}\frac{\left\Vert d^{n}f(x)-d^{n}f(y)\right\Vert_F }{\left\Vert x-y\right\Vert_E }.
\end{equation*}

\textbf{Frechet differentiability in relation with regularity in Euclidean space.}
In this work, we consider functions $f:[0,1]\times \overline \Omega\longrightarrow\mathbb R$, with $\Omega\subset \mathbb R^d$ open, and view them as a curve in a Banach space
$$
\mathcal F:[0,1]\longrightarrow \mathcal C^\alpha(\overline \Omega),\qquad\mathcal F(t)(x)=f(t,x).
$$
It is not immediately obvious how classical regularity of $f$ with respect to $(t, x)$ translates into regularity for the map $\mathcal F$. For example, requiring that $f$ be differentiable in $t$, that $\mathcal F(t)\in \mathcal C^\alpha(\overline \Omega)$, and $\mathcal F'(t)\in C^\alpha(\overline \Omega)$ is not sufficient to ensure Frechet differentiability of $\mathcal F$, since this also requires convergence of incremental quotients in $\mathcal C^\alpha(\overline \Omega)$. Nevertheless, we may still conclude that $\mathcal F$ is Frechet differentiable if we slightly strengthen these assumptions.

\begin{proposition}\label{prop:Frechet_diff}
    Consider $f:[0,1]\times\overline{\Omega}\longrightarrow\mathbb{R}$ such that
    \begin{enumerate}
        \item There is $0<\alpha<1$ with $f(t,\cdot)\in\mathcal{C}^{0, \alpha}(\overline{\Omega})$ for each $t$.
        \item $f$ is differentiable in $t$ over $(0,1)\times\overline \Omega$, and $\partial_{t}f$ is continuous over $[0,1]\times\overline{\Omega}$. 
        \item For each $t$, $\partial_{t}f(t,\cdot)\in\mathcal{C}^{0, \alpha}(\overline{\Omega})$ with $M:=\sup_{t\in(0,1)}\Vert\partial_{t}f(t,\cdot)\Vert_{\mathcal{C}^{0, \alpha}(\overline{\Omega})}<\infty$ .
    \end{enumerate}
    Then, for any $0<\beta<\alpha$, the mapping 
    $$
    \mathcal F:[0,1]\longrightarrow \mathcal C^{0, \beta}(\overline \Omega),\qquad\mathcal F(t)(x):=f(t,x),
    $$
    is Frechet differentiable with continuous differential.
\end{proposition}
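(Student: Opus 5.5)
The candidate differential is $\mathcal F'(t):=\partial_t f(t,\cdot)$. The plan is to prove that difference quotients converge to it in $\mathcal C^{0,\beta}(\overline\Omega)$, and that $t\mapsto \partial_t f(t,\cdot)$ is continuous into $\mathcal C^{0,\beta}(\overline\Omega)$. The only tool beyond elementary calculus is the interpolation inequality
\begin{equation*}
[u]_{\mathcal C^{0,\beta}(\overline\Omega)}\leq 2\,\Vert u\Vert_{L^\infty(\Omega)}^{1-\frac{\beta}{\alpha}}\,[u]_{\mathcal C^{0,\alpha}(\overline\Omega)}^{\frac{\beta}{\alpha}}.
\end{equation*}
It follows from $|u(x)-u(y)|\leq \min\{2\Vert u\Vert_\infty,[u]_\alpha|x-y|^\alpha\}$, since the minimum is bounded by the corresponding geometric mean. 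Thus it suffices to show that a family is bounded in $\mathcal C^{0,\alpha}$ and tends to zero uniformly; it then tends to zero in $\mathcal C^{0,\beta}$.

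\textbf{Step 1 (continuity of the derivative).} Set $u=\partial_tf(s,\cdot)-\partial_tf(t,\cdot)$. By hypothesis (3), $[u]_\alpha\leq 2M$. Since $\partial_t f$ is continuous on the compact set $[0,1]\times\overline\Omega$, it is uniformly continuous, so $\Vert u\Vert_\infty\to0$ as $s\to t$. The interpolation inequality gives $\Vert u\Vert_{\mathcal C^{0,\beta}}\to0$. At the endpoints $t=0,1$ this also uses that hypothesis (3) controls the $\mathcal C^{0,\alpha}$ seminorm of $\partial_t f(t,\cdot)$ there, by taking pointwise limits of the bound on $(0,1)$, which is justified by the continuity of $\partial_tf$.

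\textbf{Step 2 (difference quotients).} For $h\neq0$, the mean value theorem in $t$ for each fixed $x$ gives
\begin{equation*}
R_h(x):=\frac{f(t+h,x)-f(t,x)}{h}-\partial_tf(t,x)=\int_0^1\big(\partial_tf(t+\sigma h,x)-\partial_tf(t,x)\big)\,d\sigma .
\end{equation*}
Here the fundamental theorem of calculus holds on $[t,t+h]$ because $\partial_t f$ is continuous up to the closed interval. The identity shows two things. First, $[R_h]_\alpha\leq 2M$, by estimating the integrand's $\alpha$-seminorm pointwise under the integral. Second, $\Vert R_h\Vert_\infty\leq \sup_{|s-t|\leq|h|}\Vert \partial_tf(s,\cdot)-\partial_tf(t,\cdot)\Vert_\infty\to0$, again by uniform continuity. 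Interpolating gives $\Vert R_h\Vert_{\mathcal C^{0,\beta}}\to0$. This is exactly Fréchet differentiability of $\mathcal F$ into $\mathcal C^{0,\beta}$, with one-sided quotients at the endpoints.

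\textbf{Main obstacle.} The key difficulty, and the reason $\beta<\alpha$ is required, is that the $\mathcal C^{0,\alpha}$ norm of $R_h$ need not tend to zero; it is only bounded. The loss from $\alpha$ to $\beta$ converts this bound together with uniform convergence into convergence in $\mathcal C^{0,\beta}$. The point to be careful with is justifying the integral representation of $R_h$ and the pointwise seminorm estimate under the integral, which needs only the continuity of $\partial_tf$ and the uniform bound $M$.
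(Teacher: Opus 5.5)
Your proposal is correct and follows essentially the same route as the paper. Both use the candidate derivative $\partial_t f(t,\cdot)$, the integral representation of the remainder, the Hölder interpolation between $\mathcal C^0$ and $\mathcal C^{0,\alpha}$, and uniform continuity of $\partial_t f$ on $[0,1]\times\overline\Omega$. The only cosmetic difference is that you interpolate on the remainder $R_h$ as a whole, using $[R_h]_{\mathcal C^{0,\alpha}}\leq 2M$ and a sup over $|s-t|\leq|h|$. The paper instead interpolates inside the $\sigma$-integral and concludes by dominated convergence.
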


\begin{proof}
    The clear candidate for differential $d\mathcal{F}(t):\mathbb{R}\longrightarrow\mathcal{C}^{0,\beta}(\overline{\Omega})$ is $\left\langle d\mathcal{F}(t),h\right\rangle =\partial_{t}f(t,\cdot)h$, which we can identify with $\mathcal{F}'(t):=\partial_{t}f(t,\cdot)\in\mathcal{C}^{0,\beta}(\overline{\Omega})$. We must show that for $t\in (0,1)$, 
    \begin{equation}\label{eq:defn_Frechet_diff}
        \lim_{h\to0}\left\Vert \frac{\mathcal{F}(t+h)-\mathcal{F}(t)}{h}-\mathcal{F}'(t)\right\Vert _{\mathcal{C}^{0, \beta}(\overline{\Omega})}=0.
    \end{equation}
    Using the inequality $\Vert g\Vert_{\mathcal{C}^{0, \beta}(\overline{\Omega})}\leq C\Vert g\Vert_{\mathcal{C}^{0}(\overline{\Omega})}^{1-\frac{\beta}{\alpha}}\Vert g\Vert_{\mathcal{C}^{0, \alpha}(\overline{\Omega})}^{\frac{\beta}{\alpha}}$, we obtain when $0<t+h<1$,
    \begin{align*}
        \left\Vert \frac{\mathcal{F}(t+h)-\mathcal{F}(t)}{h}-\mathcal{F}'(t)\right\Vert _{\mathcal{C}^{0, \beta}(\overline{\Omega})} &
        =\left\Vert \int_{0}^{1}\Big(\partial_{t}f(t+sh,\cdot)-\partial_{t}f(t,\cdot)\Big)\,ds\right\Vert _{\mathcal{C}^{0, \beta}(\overline{\Omega})}\\
        & \leq CM^{\frac{\beta}{\alpha}}\int_{0}^{1}\left\Vert \partial_{t}f(t+sh,\cdot)-\partial_{t}f(t,\cdot)\right\Vert _{\mathcal{C}^{0}(\overline{\Omega})}^{1-\frac{\beta}{\alpha}}\,ds.
    \end{align*}
    Note that ${\displaystyle \lim_{h\to0}}\left\Vert \partial_{t}f(t+sh,\cdot)-\partial_{t}f(t,\cdot)\right\Vert _{\mathcal{C}^{0}(\overline{\Omega})}=0$ for each $s$ due to the uniform continuity of $\partial_{t}f$, so the dominated convergence theorem shows \eqref{eq:defn_Frechet_diff} as desired. The continuity of the differential follows from a similar calculation, as
    $$
    \left\Vert \mathcal{F}'(t+h)-\mathcal{F}'(t)\right\Vert _{\mathcal{C}^{0, \beta}(\overline{\Omega})}\leq
    CM^{\frac{\beta}{\alpha}}\left\Vert \partial_{t}f(t+h,\cdot)-\partial_{t}f(t,\cdot)\right\Vert _{\mathcal{C}^{0}(\overline{\Omega})}^{1-\frac{\beta}{\alpha}}
    \overset{h\to0}{\longrightarrow}0.\qedhere
    $$
\end{proof}
In particular, if $f$ is $\mathcal C^2$ up to the boundary with respect to $(t, x)$, then $\mathcal F$ is continuously differentiable for any $0<\beta<1$. There are straightforward generalizations of the previous Proposition to higher regularity settings.

\textbf{Results on the regularity of composition.}
Next, we state the results required for the work, beginning with Theorem 4.3 of \cite{llave1999_reg_comp}.

\begin{lemma}\label{lem:reg_of_comp1}
  Assume $U\subset E$, $V\subset F$ are convex and let $r=n+\alpha$, $s=m+\beta$  where $0\leq\alpha$, $\beta<1$, $n$, $m\in\mathbb{N}$ and $f\in\mathcal{C}^{r}(U,F)$, $g\in\mathcal{C}^{s}(V,G)$ such that $f(U)\subset V$. Then $g\circ f\in\mathcal{C}^{t}(U,G)$ where $t$ can be taken as:
  \begin{enumerate}
    \item If $n=m=0$ then $t=rs$ and 
    \begin{equation*}
      \left\Vert g\circ f\right\Vert _{\mathcal{C}^{rs}}\le\left\Vert g\right\Vert _{\mathcal{C}^{s}}\left\Vert f\right\Vert _{\mathcal{C}^{r}}^{s}+\left\Vert g\right\Vert _{\mathcal{C}^{0}}.
    \end{equation*}
    \item If $n\geq1$ or $m\geq1$ then $t=\min(r,s)$. Moreover
    \begin{enumerate}
      \item If $n\geq1$ and $0<s<1$ then
      \begin{equation*}
        \left\Vert g\circ f\right\Vert _{\mathcal{C}^{s}}\le C\left\Vert g\right\Vert _{\mathcal{C}^{s}}\left\Vert f\right\Vert _{\mathcal{C}^{1}}^{s}+\left\Vert g\right\Vert _{\mathcal{C}^{0}}.
      \end{equation*}
      \item If $m\geq1$ and $0<r<1$ then
      \begin{equation*}
        \left\Vert g\circ f\right\Vert _{\mathcal{C}^{r}}\le C\left\Vert g\right\Vert _{\mathcal{C}^{1}}\left\Vert f\right\Vert _{\mathcal{C}^{r}}+\left\Vert g\right\Vert _{\mathcal{C}^{0}}.
      \end{equation*}
      \item If $r\geq1$, $s\geq1$ then
      \begin{equation*}
        \left\Vert g\circ f\right\Vert _{\mathcal{C}^{t}}\le C\left\Vert g\right\Vert _{\mathcal{C}^{t}}\left(1+\left\Vert f\right\Vert _{\mathcal{C}^{t}}^{t}\right).
      \end{equation*}
    \end{enumerate}
  \end{enumerate}
\end{lemma}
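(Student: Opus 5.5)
We prove each case from the definition of the Hölder seminorm, using only the mean value inequality on the convex set $U$ (or $V$) and the chain rule. The convexity hypotheses on $U$ and $V$ are there precisely so that $\|f(x)-f(y)\|_F\le \|f\|_{\mathcal C^1}\|x-y\|_E$ whenever $f\in\mathcal C^1$; the same holds for $g$. Throughout, the $\mathcal C^0$ part of $\|g\circ f\|$ is bounded trivially by $\|g\|_{\mathcal C^0}$, which is where the additive term $\|g\|_{\mathcal C^0}$ comes from. So we only need to control the top-order seminorm.

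\textbf{Case (1), $n=m=0$.} For $x\neq y$ we chain the two Hölder estimates:
\begin{equation*}
\|g(f(x))-g(f(y))\|_G\le [g]_{s}\|f(x)-f(y)\|_F^{s}\le [g]_s[f]_r^{s}\|x-y\|_E^{rs}.
\end{equation*}
This gives the $\mathcal C^{rs}$ seminorm bound directly.

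\textbf{Case (2a), $n\ge1$ and $0<s<1$.} The same chain works, with the mean value inequality $\|f(x)-f(y)\|_F\le\|f\|_{\mathcal C^1}\|x-y\|_E$ replacing the Hölder bound on $f$. This yields
\begin{equation*}
[g\circ f]_s\le [g]_s\|f\|_{\mathcal C^1}^s.
\end{equation*}

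\textbf{Case (2b), $m\ge1$ and $0<r<1$.} Symmetrically, we apply the mean value inequality to $g$ on the convex set $V$, which contains the segment $[f(x),f(y)]$:
\begin{equation*}
\|g(f(x))-g(f(y))\|_G\le\|g\|_{\mathcal C^1}\|f(x)-f(y)\|_F\le \|g\|_{\mathcal C^1}[f]_r\|x-y\|_E^r.
\end{equation*}
When $r=0$ or $s=0$ the statements reduce to continuity or boundedness, which is trivial.

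\textbf{Case (2c), $r,s\ge1$.} This is the main work, and we argue by induction on $k=\lfloor t\rfloor$, where $t=\min(r,s)$. By the chain rule, $d(g\circ f)=(dg\circ f)\cdot df$, which is a product of two factors:
\begin{itemize}
\item $dg\circ f$ is a composition with $dg\in\mathcal C^{s-1}$ and $f\in\mathcal C^{r}$;
\item $df\in\mathcal C^{r-1}$.
\end{itemize}
We then proceed as follows.
\begin{enumerate}
\item Apply the induction hypothesis (or cases (1), (2a), (2b) at the base level) to $dg\circ f$. This places it in $\mathcal C^{t-1}$, with a norm bound polynomial in $\|f\|_{\mathcal C^t}$.
\item Use the Banach-algebra property of $\mathcal C^{t-1}$ under the bounded bilinear evaluation $L(F,G)\times L(E,F)\to L(E,G)$. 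By Leibniz's rule together with the elementary product estimate for Hölder seminorms, the product of the two factors lies in $\mathcal C^{t-1}$.
\end{enumerate}
Since $g\circ f$ is $\mathcal C^1$ with derivative in $\mathcal C^{t-1}$, it lies in $\mathcal C^t$.

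The main obstacle is obtaining the precise form $\|g\circ f\|_{\mathcal C^t}\le C\|g\|_{\mathcal C^t}(1+\|f\|_{\mathcal C^t}^t)$, with exponent $t$ on $\|f\|$, rather than merely some polynomial in $\|f\|$. Tracking the powers naively through the Faà di Bruno expansion gives terms $\|g\|_{\mathcal C^{j}}\prod_i\|d^{a_i}f\|$ with $\sum a_i=k$. To collapse these to power $t$, we would interpolate the intermediate derivatives $\|d^{a}f\|$ between $\|df\|_{\mathcal C^0}$ and $[f]_{\mathcal C^t}$ via standard Hölder interpolation inequalities; this is the argument of \cite{llave1999_reg_comp}, which we would follow. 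For the applications in this paper, only membership in $\mathcal C^t$ and a bound depending on $\|g\|_{\mathcal C^t}$ and $\|f\|_{\mathcal C^t}$ are used, so the induction above already suffices.
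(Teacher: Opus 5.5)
Your argument is correct, modulo the one point below. It takes a genuinely different route from the paper, which does not prove this lemma at all: it imports it from \cite[Theorem 4.3]{llave1999_reg_comp}.

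Your cases (1), (2a) and (2b) are the direct chaining of the H\"older and mean-value bounds. Your induction for (2c), via $d(g\circ f)=(dg\circ f)\,df$ and the Leibniz product estimate in $\mathcal C^{t-1}$, is sound. The direct proof shows that convexity of $U$, $V$ enters only through the mean value inequality. It is used directly, and also through the embedding $[d^jh]_{\gamma}\le 2\|h\|_{\mathcal C^{j+1}}$ (split into $\|x-y\|\le 1$ and $\|x-y\|>1$). You use that embedding implicitly to place $f$, $dg$ and the lower-order Leibniz factors in $\mathcal C^{t-1}$, since the H\"older quotients in the paper's norms are global. What the citation buys is economy and consistency with Lemma~\ref{lem:reg_of_comp2}, whose differentiability statements come from the same source and are much harder to reproduce by hand.

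The correction: the ``main obstacle'' you describe in (2c) is not one. You should not leave that inequality to the reference, because your own induction already gives exactly the exponent $t$.

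For $t=1+\gamma$ with $0<\gamma<1$, split the difference of $(dg\circ f)\,df$ at $x$ and $y$. This gives
\[
[(dg\circ f)\,df]_{\gamma}\le [dg]_{\gamma}\|df\|_{\infty}^{1+\gamma}+\|dg\|_{\infty}[df]_{\gamma},
\]
hence $\|g\circ f\|_{\mathcal C^t}\le \|g\|_{\mathcal C^t}\bigl(1+2\|f\|_{\mathcal C^t}+\|f\|_{\mathcal C^t}^{t}\bigr)$. The case $t=1$ is immediate.

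For $\lfloor t\rfloor\ge 2$, apply the induction hypothesis to the pair $(f,dg)$ at regularity $t-1\ge 1$. This gives $\|dg\circ f\|_{\mathcal C^{t-1}}\le C\|g\|_{\mathcal C^t}\bigl(1+\|f\|_{\mathcal C^t}^{t-1}\bigr)$. Multiply by $\|df\|_{\mathcal C^{t-1}}\le \|f\|_{\mathcal C^t}$ and use $x\le 1+x^t$ for $x\ge 0$, $t\ge 1$; you get $C\|g\|_{\mathcal C^t}\bigl(1+\|f\|_{\mathcal C^t}^{t}\bigr)$.

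Equivalently, in the Fa\`a di Bruno expansion each term is bounded by $C\|g\|_{\mathcal C^t}\|f\|_{\mathcal C^t}^{p}$ with $0\le p\le j+\gamma\le t$. Every such power is dominated by $1+\|f\|_{\mathcal C^t}^{t}$. Interpolation is only needed for \emph{tame} estimates such as $\|g\|_{\mathcal C^t}\|f\|_{\mathcal C^1}^{t}+\|g\|_{\mathcal C^1}\|f\|_{\mathcal C^t}+\|g\|_{\mathcal C^0}$, which the lemma does not assert.
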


Now we collect a simplified version of Proposition 6.7 and Theorem 6.10 from \cite{llave1999_reg_comp} which is adapted for our purposes.

\begin{lemma}\label{lem:reg_of_comp2}
  Let $m\in \mathbb{N}$, and $r,s,t\geq 0$ satisfying
  \begin{equation*}
    t\leq
    \begin{cases}
      rs, & \textnormal{if }0\leq r,s<1,\\
      \min(r,s), & \textnormal{otherwise}.
    \end{cases}
  \end{equation*}
  Further assume one of the following:
  \begin{enumerate}[label=(\alph*)]
    \item \label{item:reg_comp2_p1} If $0<t<1$, then $t<s-m$, $t\leq r$, $t<r(s-m)$.
    \item \label{item:reg_comp2_p2} If $t=k+\gamma$ where $k\geq1$, $0\leq\gamma<1$, then $t<s-m$, $t\leq r$.
  \end{enumerate}
  Also consider convex open sets $U\subset E$, $V\subset F$, and $\mathcal U\subset \mathcal C^r(U, F)$ open such that for every $f\in \mathcal U$ we have $\textnormal{dist}(f(U),V^c)>0$. Then,
  \begin{enumerate}
      \item \label{item:g*_comp} Given $g\in\mathcal{C}^{s}(V,G)$, the mapping $g_{*}:\mathcal{U}\subset\mathcal{C}^{r}(U,F)\longrightarrow\mathcal{C}^{t}(U,G)$ given by $g_{*}(f)=g\circ f$ is well-defined and $m$ times continuously differentiable.
      \item \label{item:comp_reg} The composition map $\textnormal{Comp}:\left[\mathcal{U}\subset\mathcal{C}^{r}(U,F)\right]\times\mathcal{C}^{s}(V,G)\longrightarrow\mathcal{C}^{t}(U,G)$ given by $\textnormal{Comp}(f,g)=g\circ f$ is also $m$ times continuously differentiable.
  \end{enumerate}
\end{lemma}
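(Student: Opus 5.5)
The plan is to prove statement \eqref{item:g*_comp} by induction on $m$, using the explicit candidate derivatives
\begin{equation*}
  d^{j}g_{*}(f)[h_{1},\dots,h_{j}] := \big(d^{j}g\circ f\big)[h_{1},\dots,h_{j}], \qquad 0\leq j\leq m,
\end{equation*}
and then to deduce statement \eqref{item:comp_reg} from it, using that $\textnormal{Comp}$ is linear in $g$. The approach rests on three ingredients.

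\emph{(i) Well-definedness and boundedness.} For $f\in\mathcal U$ and $\|h\|_{\mathcal C^{r}}$ small, the segment $f+\tau h$, $\tau\in[0,1]$, stays at positive distance from $V^{c}$, since $\textnormal{dist}(f(U),V^{c})>0$. We have $d^{j}g\in\mathcal C^{s-j}(V)$. By Lemma~\ref{lem:reg_of_comp1}, $d^{j}g\circ(f+\tau h)\in\mathcal C^{t'}(U)$ for some $t<t'$. This strict gap exists because of the hypotheses $t<s-m$, $t<r(s-m)$ in case \ref{item:reg_comp2_p1}, and $t<s-m$, $t\leq r$ in case \ref{item:reg_comp2_p2}. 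The $\mathcal C^{t'}$ bound is uniform in $\tau$ and in small $h$.

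\emph{(ii) Multilinear bounds.} Since $t\leq r$, each $h_{i}\in\mathcal C^{r}\subset\mathcal C^{t}$. The Hölder product estimate, which is the role of Lemma~\ref{lem:prod_quotient_Holder} in the paper, then shows that $(d^{j}g\circ f)[h_{1},\dots,h_{j}]$ is a bounded $j$-linear map into $\mathcal C^{t}$.

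\emph{(iii) Taylor remainder.} Write
\begin{equation*}
  g\circ(f+h)-\sum_{j\leq m}\frac{1}{j!}(d^{j}g\circ f)h^{\otimes j}=\int_{0}^{1}\frac{(1-\tau)^{m-1}}{(m-1)!}\Big[d^{m}g\circ(f+\tau h)-d^{m}g\circ f\Big]h^{\otimes m}\,d\tau .
\end{equation*}
The task is to show that the bracket tends to $0$ in $\mathcal C^{t}$ as $\|h\|_{\mathcal C^{r}}\to 0$; together with (ii), this gives an $o(\|h\|^{m}_{\mathcal C^{r}})$ remainder. This is the step I expect to be the main obstacle, since $d^{m}g$ has only $s-m$ derivatives and no uniform $\mathcal C^{t}$ modulus directly gives smallness. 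To handle it, I will use the $\mathcal C^{0}$ estimate $\|d^{m}g\circ(f+\tau h)-d^{m}g\circ f\|_{\mathcal C^{0}}\leq C\|h\|_{\mathcal C^{0}}^{\min(1,s-m)}$, which comes from Hölder continuity of $d^{m}g$. I then combine it with the uniform $\mathcal C^{t'}$ bound from (i) through the interpolation inequality
\begin{equation*}
  \|\cdot\|_{\mathcal C^{t}}\leq C\|\cdot\|_{\mathcal C^{0}}^{1-t/t'}\|\cdot\|_{\mathcal C^{t'}}^{t/t'},
\end{equation*}
which is the same device used in Proposition~\ref{prop:Frechet_diff}; when $t\geq1$ it is applied to derivatives via the chain rule. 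This is exactly where the strict inequality $t<s-m$ is essential. The same interpolation argument gives continuity of $f\mapsto d^{m}g\circ f$ into $\mathcal C^{t}$, so $g_{*}$ is $m$ times \emph{continuously} differentiable. The lower-order derivatives are handled by the induction, applying the argument to $d^{j}g\in\mathcal C^{s-j}$ with $m$ replaced by $m-j$.

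For \eqref{item:comp_reg}, the candidate derivative is
\begin{equation*}
  d\,\textnormal{Comp}(f,g)[h,k]=k\circ f+(dg\circ f)h .
\end{equation*}
Because $\textnormal{Comp}$ is linear in $g$, its higher mixed derivatives are sums of terms $(d^{j}k\circ f)h^{\otimes j}$ and $(d^{j}g\circ f)h^{\otimes j}$. The $g$-dependence of each term is linear and bounded, with operator norm controlled by $\|k\|_{\mathcal C^{s}}$ through step (i). The $f$-dependence is handled by part \eqref{item:g*_comp} applied to $d^{j}k$ and $d^{j}g$, with estimates that are uniform for $g$ in bounded sets. This yields joint $m$-fold continuous Fréchet differentiability.
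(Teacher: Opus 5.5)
Note first that the paper does not prove this lemma: it is quoted from de la Llave--Obaya (Proposition~6.7 and Theorem~6.10 of \cite{llave1999_reg_comp}), so your self-contained argument is a different route. Its architecture is sound: induction on $m$ through $dg_*(f)h=(dg\circ f)h$, reduction to continuity of $f\mapsto G\circ f$ into $\mathcal C^t$ for $G=d^jg$, and linearity in $g$ for part (2). The gap is in step (i), on which step (iii) rests. You claim $d^jg\circ(f+\tau h)$ is uniformly bounded in some $\mathcal C^{t'}$ with $t'>t$. This fails in the boundary case $t=r$, which the hypothesis $t\le r$ explicitly allows. There Lemma~\ref{lem:reg_of_comp1} only gives the exponent $\min(r,s-j)=r=t$, and nothing better holds in general: for $g(y)=y^{m+1}$ on $\mathbb R$, $d^mg\circ f=(m+1)!\,f$ is exactly as regular as $f$. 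Without a uniform bound in a strictly stronger norm, your interpolation inequality cannot turn $\mathcal C^0$-smallness of $d^mg\circ(f+\tau h)-d^mg\circ f$ into $\mathcal C^t$-smallness.

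For $t=k+\gamma\ge1$, your chain-rule remark does rescue the argument. After $k$ derivatives, every factor $d^{m+j}g\circ f$ with $j\ge1$ is bounded in some $\mathcal C^{\gamma'}$ with $\gamma'>\gamma$, because $s-m-j>t-j\ge\gamma$ and $f$ is Lipschitz. The factors $d^if$ enter linearly. For $t=r<1$, however, there is nothing to differentiate. This is case (a), where $t<r(s-m)$ forces $s-m>1$. It is exactly the case the paper needs: in the proof of Lemma~\ref{lem:Gamma_Cm}, the maps $\mathcal L=\log$ and $H\mapsto\log\det H$ are treated with $r=t=\beta$.

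The missing ingredient is a mixed second-difference estimate that uses $dG$ instead of interpolation. Take $G=d^mg$, fix $x,y\in U$, and set $a=(f+\tau h)(x)$, $b=f(x)$, $c=(f+\tau h)(y)$, $d=f(y)$. Then
\begin{align*}
G(a)-G(b)-G(c)+G(d)&=\int_0^1\big[dG(c+\sigma(a-c))-dG(d+\sigma(b-d))\big](a-c)\,d\sigma\\
&\quad+\int_0^1 dG(d+\sigma(b-d))\big[(a-b)-(c-d)\big]\,d\sigma.
\end{align*}
The relevant facts are:
\begin{itemize}
\item the segments lie in $V$ by convexity;
\item the two arguments of $dG$ in the first bracket differ by $(1-\sigma)(c-d)+\sigma(a-b)$, of size at most $\tau\|h\|_{\mathcal C^0}$;
\item $|a-c|\le([f]_{\mathcal C^r}+[h]_{\mathcal C^r})|x-y|^r$;
\item $|(a-b)-(c-d)|\le[h]_{\mathcal C^r}|x-y|^r$.
\end{itemize}
Since $s-m>1$, $dG$ is $\theta$-H\"older on $V$ with $\theta=\min(1,s-m-1)>0$. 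Hence
\begin{equation*}
[G\circ(f+\tau h)-G\circ f]_{\mathcal C^r}\le[dG]_{\mathcal C^\theta}\|h\|_{\mathcal C^0}^{\theta}\big([f]_{\mathcal C^r}+[h]_{\mathcal C^r}\big)+\|dG\|_{\mathcal C^0}[h]_{\mathcal C^r}.
\end{equation*}
This tends to $0$ with $\|h\|_{\mathcal C^r}$ and is linear in $\|g\|_{\mathcal C^s}$, which is the uniformity part (2) requires. The same bound with $h$ replaced by $f'-f$ gives continuity of the top derivative. Use this estimate in place of interpolation when $t=r<1$, including for the lower-order $d^jg$, where also $s-j>1$. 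Keep your chain-rule version for $t\ge1$. With those changes, the rest of your argument goes through.
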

We note that in Section 6 of \cite{llave1999_reg_comp} one finds an additional geometrical hypothesis h.4 which is not include in the previous statement. This is due to the fact that h.4 is automatically satisfied when $V$ is convex, since one has that $V_\varepsilon:=\{y\in V:\textnormal{dist}(y,V^c)>\varepsilon\}$ is also convex.

Finally, we will also make use of the following result.

\begin{lemma}\label{lem:prod_quotient_Holder}
  Consider a bounded domain $\Omega\subset \mathbb{R}^n$ and $f\in \mathcal C^s(\Omega)$, $g\in \mathcal C^s(\Omega)$. Then there is a constant $C=C(s,\textnormal{diam}(\Omega))$ such that
  \begin{equation}\label{eq:prod_holder}
    \left\Vert f\cdot g\right\Vert _{\mathcal{C}^{s}(\Omega)}\leq 
    C\left\Vert f\right\Vert _{\mathcal{C}^{s}(\Omega)}\left\Vert g\right\Vert _{\mathcal{C}^{s}(\Omega)}.
  \end{equation}
  If we further assume that $\inf_\Omega g>0$ then there is $C=C(s,\inf g,\textnormal{diam}(\Omega))$ such that
  \begin{equation}\label{eq:quot_holder}
    \left\Vert f/g\right\Vert _{\mathcal{C}^{s}(\Omega)}\leq 
    C\left\Vert f\right\Vert _{\mathcal{C}^{s}(\Omega)}(1+\left\Vert g\right\Vert _{\mathcal{C}^{s}(\Omega)}).
  \end{equation}
\end{lemma}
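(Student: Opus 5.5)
The plan is to prove the product estimate \eqref{eq:prod_holder} first and then reduce the quotient estimate \eqref{eq:quot_holder} to it by writing $f/g=f\cdot(1/g)$, so that everything comes down to a bound on $\lVert 1/g\rVert_{\mathcal C^s(\Omega)}$. Throughout, write $s=k+\gamma$ with $k\in\mathbb N$ and $0\le\gamma<1$, and use the norms from the beginning of this appendix.

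\textbf{Product, case $k=0$.} The sup part is immediate: $\sup|fg|\le\sup|f|\sup|g|$. For the seminorm I would use
\begin{equation*}
f(x)g(x)-f(y)g(y)=\bigl(f(x)-f(y)\bigr)g(x)+f(y)\bigl(g(x)-g(y)\bigr),
\end{equation*}
which gives $[fg]_{\gamma}\le[f]_\gamma\lVert g\rVert_{\mathcal C^0}+\lVert f\rVert_{\mathcal C^0}[g]_\gamma$. This is already bounded by $C\lVert f\rVert_{\mathcal C^s}\lVert g\rVert_{\mathcal C^s}$.

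\textbf{Product, case $k\ge1$.} I would argue by induction on $k$, using the Leibniz rule
\begin{equation*}
d^j(fg)=\sum_{i\le j}\binom{j}{i}\,d^if\otimes d^{j-i}g,\qquad j\le k.
\end{equation*}
Each term is bounded in sup by $\lVert f\rVert_{\mathcal C^s}\lVert g\rVert_{\mathcal C^s}$. For $j=k$, the $\gamma$-Hölder seminorm of each term is handled by the $k=0$ computation applied to the pair $(d^if,d^{k-i}g)$. When $i<k$, the factor $d^if$ is Lipschitz, with constant $\lVert f\rVert_{\mathcal C^{i+1}}$. On a bounded set this yields a $\gamma$-Hölder bound with a factor $\textnormal{diam}(\Omega)^{1-\gamma}$. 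I would apply the mean value inequality along segments; if $\Omega$ is not convex I would instead invoke the convention of Lemma~\ref{lem:reg_of_comp1} that the domains are convex, and the same applies to $d^{k-i}g$. This is exactly where the dependence of $C$ on $\textnormal{diam}(\Omega)$ enters.

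\textbf{Quotient, case $k=0$.} Set $m=\inf g>0$. Then $\sup|1/g|\le m^{-1}$ and
\begin{equation*}
\left|\frac1{g(x)}-\frac1{g(y)}\right|=\frac{|g(x)-g(y)|}{g(x)g(y)}\le m^{-2}[g]_\gamma,
\end{equation*}
so $\lVert 1/g\rVert_{\mathcal C^s}\le C(m)\bigl(1+\lVert g\rVert_{\mathcal C^s}\bigr)$. Combined with \eqref{eq:prod_holder}, this gives \eqref{eq:quot_holder}. This is the case actually used in the body of the paper, for $\mathcal C^{0,\alpha}$ and $\mathcal C^{0,\beta}$ coefficients such as $f_t(D^2\phi_t)^{-1}$ and the logarithm and quotient terms.

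\textbf{Quotient, case $k\ge1$ (the main obstacle).} The derivatives $d^j(1/g)$ given by Faà di Bruno are sums of terms of the form
\begin{equation*}
g^{-1-\ell}\,d^{i_1}g\otimes\cdots\otimes d^{i_\ell}g,\qquad i_1+\cdots+i_\ell=j.
\end{equation*}
These are multilinear in the derivatives of $g$, so the naive bound is polynomial in $\lVert g\rVert_{\mathcal C^s}$ rather than linear. This is the same loss that appears in Lemma~\ref{lem:reg_of_comp1}(c) with $\varphi(u)=1/u$ on $[m,\infty)$. To recover the form \eqref{eq:quot_holder}, I would first try to control each product of lower-order derivatives by Gagliardo--Nirenberg-type interpolation, $\lVert d^ig\rVert_{\mathcal C^0}\le C\lVert g\rVert_{\mathcal C^0}^{1-i/s}\lVert g\rVert_{\mathcal C^s}^{i/s}$, together with the observation that the prefactor $g^{-1-\ell}$ contributes $m^{-1-\ell}$. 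If this interpolation does not close with a constant depending only on $s$, $m$ and $\textnormal{diam}(\Omega)$, the fallback is to let the constant absorb the lower-order norms of $g$. This fallback is harmless for every application in the paper, where $g$ ranges over a family with uniformly bounded norms.
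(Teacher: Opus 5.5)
Your product estimate, and your quotient estimate for $0<s<1$, are correct. They are the paper's argument written out by hand. The paper takes \eqref{eq:prod_holder} from Gilbarg--Trudinger. It then gets $\lVert 1/g\rVert_{\mathcal C^s}\le C(\inf g)\bigl(1+\lVert g\rVert_{\mathcal C^s}\bigr)$ from case 2(b) of Lemma~\ref{lem:reg_of_comp1} applied to $u\mapsto 1/u$, which is exactly your two-line computation, and multiplies by $f$.

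Your appeal to convexity in the product step for $k\ge1$ is necessary, not cosmetic. Only the top-order derivative carries a H\"older seminorm, so on the slit annulus $\{1<|x|<2\}\setminus\{(x_1,0):1<x_1<2\}$ the following happens. A smoothed angle function $g=\Theta(\theta)$, with $\Theta\equiv0$ near $\theta=0$ and $\Theta\equiv1$ near $\theta=2\pi$, belongs to $\mathcal C^{1+\gamma}$. But $\nabla(x_1g)$ jumps by $e_1$ across the slit, so \eqref{eq:prod_holder} fails there for $s=1+\gamma$.

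The step you leave open, the quotient with $k\ge1$, cannot be closed by interpolation or by anything else: \eqref{eq:quot_holder} is false for every $s>1$. Take $\Omega=(0,1)$, $f\equiv1$ and $g(x)=m+Lx$ with $L>m>0$. Then $\inf g=m$, $\lVert f\rVert_{\mathcal C^s}=1$ and $\lVert g\rVert_{\mathcal C^s}=m+2L$. On the other hand, $(1/g)'(x)=-L(m+Lx)^{-2}$ gives
\[
\Bigl|\bigl(\tfrac{1}{g}\bigr)'(0)-\bigl(\tfrac{1}{g}\bigr)'\bigl(\tfrac{m}{L}\bigr)\Bigr|=\frac{3L}{4m^{2}}.
\]
So for $1<s<2$ (with $\gamma=s-1$) one has $\lVert 1/g\rVert_{\mathcal C^s}\ge\tfrac{3}{4}m^{-2-\gamma}L^{1+\gamma}$. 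For $s\ge2$ one already has $|(1/g)''(0)|=2L^2m^{-3}$. In both cases the ratio to $1+\lVert g\rVert_{\mathcal C^s}$ blows up as $L\to\infty$.

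Your interpolation cannot help because it always leaves a positive power of $\lVert g\rVert_{\mathcal C^0}$, which $\inf g$ does not control. Here $|g'|=L$ at the very point where $g$ attains its minimum $m$. So your fallback is forced: the quotient estimate must either be restricted to $s\le1$, or have its right-hand side replaced by something like $1+\lVert g\rVert_{\mathcal C^s}^{s}$. The latter is what case 2(c) of Lemma~\ref{lem:reg_of_comp1} actually gives, so the paper's one-line derivation has the same limitation. Nothing downstream is affected, since every use of the quotient estimate in the paper is at a H\"older exponent below $1$.
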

The estimate (\ref{eq:prod_holder}) can be found in \cite[Sec. 4.1]{gilbarg_trudinger1977elliptic}, while (\ref{eq:quot_holder}) is immediately derived from Lemma \ref{lem:reg_of_comp1} and (\ref{eq:prod_holder}).

\section{A quantitative $\mathcal C^{2,\alpha}$ estimate for the Brenier potential}\label{sec:compactness_ellipticity}
In this appendix we prove Lemma~\ref{lem: global C2 estimate}. Our argument only relies on the $\mathcal C^{1,\alpha}$ estimate from \cite{caffarelli1992boundary_reg_convex_potential} and the fact that the Brenier potential is $\mathcal C^{2,\alpha}$ when the densities are $\mathcal C^{0,\alpha}$, which was proven in \cite{chenLiuWang2021MAglobalReg}. Let us begin by motivating the main idea behind the proof.

Consider $f\in \mathcal C^{0,\alpha}(\overline\Omega)$, $g\in \mathcal C^{0,\alpha}(\overline \Omega_*)$, let $\phi$ be the solution of \eqref{eq:MA_appendix}. Assume we are able to prove that 
\begin{equation*}
  (f,g)\in\mathcal{C}^{0,\alpha}(\overline{\Omega})\times\mathcal{C}^{0,\alpha}(\overline{\Omega}_{*})
  \mapsto\phi\in\mathcal{C}^{2,\alpha}(\overline{\Omega})
\end{equation*}
  is continuous as a map between Banach spaces. Then the conclusion of Lemma~\ref{lem: global C2 estimate} would immediately follow, as a result of the compactness of the following set in $\mathcal{C}^{0,\alpha}(\overline{\Omega})\times\mathcal{C}^{0,\alpha}(\overline{\Omega}_{*})$:
\begin{equation*}
  \left\{ (f,g)\in\mathcal{C}^{0,\alpha}(\overline{\Omega})\times\mathcal{C}^{0,\alpha}(\overline{\Omega}_{*})\mid f,g\geq a,\;\left\Vert f\right\Vert _{\mathcal{C}^{0,\beta}(\overline{\Omega})},\left\Vert g\right\Vert _{\mathcal{C}^{0,\beta}(\overline{\Omega}_{*})}\leq M\right\} .
\end{equation*}
Then, one may attempt to show the continuity (smoothness) of $(f,g)\mapsto\phi$ through the implicit function theorem. This would involve the definition of a functional similar to one in  Section \ref{sec:time_reg_ifc}, which would be of the form
\begin{align*}
  & \Gamma:\mathcal{C}^{0,\alpha}(\overline{\Omega})\times\mathcal{C}^{0,\alpha}(\overline{\Omega}_{*})\times\mathcal{C}^{2,\alpha}(\overline{\Omega})\longrightarrow\mathcal{C}^{0,\alpha}(\overline{\Omega})\times\mathcal{C}^{1,\alpha}(\partial\Omega)	\\
  & \Gamma(f,g,\phi):=\left(\log\det D^{2}\phi+\log g(\nabla\phi)-\log f,\Gamma^{(2)}(f,g,\phi)\right),
\end{align*}
where the coordinate $\Gamma^{(2)}$ is chosen to enforce the boundary condition $\nabla\phi(\Omega)=\Omega_{*}$. However this runs into a technical problem: in order to apply the implicit function theorem, we need $\Gamma$ to be at least $\mathcal{C}^{1}$ as a map between Hölder spaces. Unfortunately, for $(g,\phi)\mapsto g(\nabla\phi)$ to be a smooth map, $g$ needs to belong to a regularity class better than $\mathcal C^1$ (see Appendix \ref{sec:reg_comp} or \cite{llave1999_reg_comp} for further details on the regularity of composition). 

In this manuscript we avoid this problem by first considering the case where $g$ is constant. To be precise, we consider the mapping
\begin{equation*}
  f\in\mathcal{C}_{>0}^{0,\alpha}(\overline{\Omega})\mapsto\phi_{f}\in\mathcal{C}^{2,\alpha}(\overline{\Omega}),
\end{equation*}
where $\mathcal{C}_{>0}^{0,\alpha}(\overline{\Omega})$ denotes the subset of $\mathcal{C}^{0,\alpha}(\overline{\Omega})$ formed by functions bounded away from $0$ and $\phi_{f}$ is the unique convex function with average $0$ solving
\begin{equation}\label{eq:MA_g_constant}
  \begin{cases}
  \det D^{2}\phi_{f}=\left\langle f\right\rangle _{\Omega}^{-1}|\Omega_{*}|f, & \textnormal{in }\Omega,\\
  \nabla\phi_{f}(\Omega)=\Omega_{*},
  \end{cases}
\end{equation}
where $\left\langle f\right\rangle _{\Omega}:=\int_{\Omega}f$. %
Note that the factor $\left\langle f\right\rangle _{\Omega}^{-1}|\Omega_{*}|$ only appears so that the mass balance condition is satisfied for the optimal transport problem. Since $\partial\Omega$ and $\partial\Omega_*$ are $\mathcal{C}^{1, 1}$ regular, by \cite[Theorem 1.1]{chenLiuWang2021MAglobalReg}, for any $f\in \mathcal{C}^{0, \alpha}_{>0}(\overline\Omega)$, the the above solution $\phi_f$ belongs to $\mathcal{C}^{2, \alpha}(\overline\Omega)$, which is uniformly convex by Remark~\ref{rmk:ellipticity_D2phi}; in particular this shows the map $f\mapsto \phi_f$ is well-defined on $\mathcal{C}^{0, \alpha}_{>0}(\overline\Omega)$. In this setting, we are able to prove the following lemma:
\begin{lemma} \label{lem:workaround}
  Assume $\Omega$, $\Omega_{*}\subset\mathbb{R}^{d}$ are convex sets where $\partial\Omega,\,\partial\Omega_*\in \mathcal{C}^{2,\beta}$. Then for any $\alpha<\beta$, $f\in\mathcal{C}_{>0}^{0,\alpha}(\overline{\Omega})\mapsto\phi_{f}\in\mathcal{C}^{2,\alpha}(\overline{\Omega})$ is of class $\mathcal C^1$ when viewed as a mapping between Banach spaces.
\end{lemma}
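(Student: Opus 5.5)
The plan is to run the implicit function theorem argument of Section~\ref{sec:time_reg_ifc}, now with the density $f$ itself as the parameter instead of $t$. Since $g$ is the constant $\langle f\rangle_\Omega/|\Omega_*|$, no composition $g\circ\nabla\phi$ appears, and this removes the regularity loss that forced $\beta<\alpha$ there.

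Fix $\alpha<\beta$ and define
\begin{equation*}
  \Gamma:\mathcal C^{0,\alpha}_{>0}(\overline\Omega)\times\mathcal C^{2,\alpha}_u(\overline\Omega)\to\mathcal C^{0,\alpha}(\overline\Omega)\times\mathcal C^{1,\alpha}(\partial\Omega),
\end{equation*}
\begin{equation*}
  \Gamma(f,\phi):=\Big(\log\det D^2\phi-\log f+\log\langle f\rangle_\Omega-\log|\Omega_*|,\ \omega_*(\nabla\phi)|_{\partial\Omega}\Big).
\end{equation*}
This map is $\mathcal C^1$, in fact $\mathcal C^\infty$, for the following reasons.
\begin{itemize}
  \item $\phi\mapsto D^2\phi$ is linear and bounded, and $\log\det$ is smooth with bounded derivatives on each set $\mathcal M_\Lambda$. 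Lemma~\ref{lem:reg_of_comp2}\eqref{item:g*_comp} with $s=\infty$ then shows that $H\mapsto\log\det H$ is smooth on $\mathcal C^{0,\alpha}$, and there is no loss of Hölder exponent because the outer function is smooth.
  \item $f\mapsto\log f$ is smooth in the same way, using the decomposition into the open sets $\mathcal V_\varepsilon$.
  \item $f\mapsto\langle f\rangle_\Omega$ is a bounded linear functional.
  \item $\phi\mapsto\omega_*(\nabla\phi)$ is smooth from $\mathcal C^{2,\alpha}$ into $\mathcal C^{1,\alpha}(\partial\Omega)$ by Lemma~\ref{lem:reg_of_comp2}, provided $\omega_*$ is smooth enough near $\overline\Omega_*$. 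Since $\partial\Omega_*\in\mathcal C^{2,\beta}$, a defining function of class $\mathcal C^{2,\beta}$ exists, and this is exactly where the strict inequality $\alpha<\beta$ is used: part \ref{item:reg_comp2_p2} with $t=1+\alpha<s-1=1+\beta$ gives one continuous derivative. Once $\mathcal C^1$ is established, this suffices.
\end{itemize}

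For the differential in $\phi$ at a solution $\phi_f$, Lemma~\ref{lem:Gamma_Cm} gives
\begin{equation*}
  D_\phi\Gamma\,\xi=\Big(\tr[(D^2\phi_f)^{-1}D^2\xi],\ \inner{\normal_*(\nabla\phi_f)}{\nabla\xi}\Big).
\end{equation*}
I will adapt Lemma~\ref{lem:magic_lemma}. Here $\det D^2\phi_f$ is proportional to $f$, so $\div[(\det D^2\phi)(D^2\phi)^{-1}]=0$ (the cofactor matrix is divergence free) gives
\begin{equation*}
  f\,\tr[(D^2\phi_f)^{-1}D^2\xi]=\div[f(D^2\phi_f)^{-1}\nabla\xi].
\end{equation*}
This identity needs $\phi_f\in\mathcal C^3$, which we do not have. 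I will instead justify it distributionally: for $\phi\in\mathcal C^2$ the cofactor matrix is divergence free in the sense of distributions, by approximating $\phi$ by smooth functions, so the divergence-form problem is well posed weakly. The boundary condition transforms exactly as in Lemma~\ref{lem:magic_lemma}; that computation uses only $\mathcal C^2$ regularity of $\phi_f$.

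Invertibility then follows as before. The compatibility condition is \eqref{eq:compat_cond}, Lax--Milgram gives existence of $H^1$ solutions, and the Schauder estimate Lemma~\ref{lem:Schauder_estimate} together with the uniform ellipticity from Remark~\ref{rmk:ellipticity_D2phi} yields $\xi\in\mathcal C^{1,\alpha}$. The coefficients $f(D^2\phi_f)^{-1}$ lie only in $\mathcal C^{0,\alpha}$, which is not enough for $\mathcal C^{2,\alpha}$ in divergence form. So the upgrade to $\mathcal C^{2,\alpha}$ will come from the nondivergence form $\tr[(D^2\phi_f)^{-1}D^2\xi]=p$ with oblique boundary condition, using \cite[Theorem 6.31]{gilbarg_trudinger1977elliptic}. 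That theorem requires the obliqueness $\inner{\normal_*(\nabla\phi_f)}{\normal}>0$, which holds by \cite{Urbas97} or by the explicit formula for $\normal_*(\nabla\phi_f)$ above.

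Next, correct $\Gamma$ exactly as in the construction of $\tilde\Gamma$ at a base point $f_0$, restricting to the average-zero subspace $\mathcal X$ and projecting the target onto $\mathcal Y_{f_0}$. The implicit function theorem then gives a $\mathcal C^1$ map $f\mapsto\tilde\phi_f$ on a neighborhood of $f_0$. The mass-balance argument at the end of the proof of Theorem~\ref{thm:implicit_fun} forces the added constant to vanish. Uniqueness of Brenier potentials then identifies $\tilde\phi_f=\phi_f$, and since $f_0$ was arbitrary, $f\mapsto\phi_f$ is $\mathcal C^1$. I expect the main obstacle to be the invertibility step, namely the $\mathcal C^{2,\alpha}$ regularity of the linearized solution with only $\mathcal C^{0,\alpha}$ coefficients and the justification of the divergence identity without $\mathcal C^3$ regularity of $\phi_f$.
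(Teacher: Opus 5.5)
Your architecture matches the paper's. You apply the implicit function theorem to a constant-corrected functional on the mean-zero space $\mathcal X$, rewrite the linearization in divergence form, and use mass balance to identify $\tilde\phi_f=\phi_f$. Your $\log\det$ normalization is equivalent to the paper's $\det$ formulation, since at a solution $f(D^2\phi_f)^{-1}$ is a constant multiple of the cofactor matrix $A[\phi_f]$. Your account of where $\alpha<\beta$ enters through $\omega_*$ is also correct.

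The gap is in the step you flagged: showing that every compatible pair $(p,q)$ is attained by some $\xi\in\mathcal C^{2,\alpha}(\overline\Omega)$. Two things go wrong.
\begin{enumerate}
\item The Lax--Milgram solution is only known to be a $\mathcal C^{1,\alpha}$ weak solution. Your distributional cofactor identity runs the other way: it says a $\mathcal C^{2}$ solution of the nondivergence problem is a weak solution of the divergence one. So nothing lets you apply nondivergence theory to that $\xi$.
\item \cite[Theorem 6.31]{gilbarg_trudinger1977elliptic} does not provide a $\mathcal C^{2,\alpha}$ solution to compare with. It is a unique-solvability theorem that requires a zeroth-order boundary term ($\gamma\beta_\nu>0$ for $Nu=\gamma u+\beta\cdot Du$). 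Your problem has $c\equiv 0$ and the pure oblique condition $\inner{\normal_*(\nabla\phi_f)}{\nabla\xi}=q$. The constants lie in its kernel and its range is cut out by a compatibility condition, so no unique-solvability theorem can apply as cited.
\end{enumerate}

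The paper's device is to mollify $\phi_f$. Then $\div A[\phi_\varepsilon]=0$ holds classically, so the two forms of the approximate problem coincide. It then passes to the limit using uniform Schauder bounds and uniqueness of weak solutions up to constants. Be aware that the constant in \cite[Theorem 6.30]{gilbarg_trudinger1977elliptic} depends on the $\mathcal C^{1,\alpha}$ norm of the oblique field $A[\phi_\varepsilon]\normal/|A[\phi_\varepsilon]\normal|$, which mollification does not obviously control.

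With your own ingredients there is a cleaner repair.
\begin{enumerate}
\item For the boundary operator $\xi+\inner{\normal_*(\nabla\phi_f)}{\nabla\xi}$, the hypotheses of Theorem 6.31 do hold, so that modified problem is an isomorphism.
\item The map $\xi\mapsto(0,\xi|_{\partial\Omega})$ is compact from $\mathcal C^{2,\alpha}(\overline\Omega)$ into $\mathcal C^{0,\alpha}(\overline\Omega)\times\mathcal C^{1,\alpha}(\partial\Omega)$. Hence your linearized operator is Fredholm of index zero.
\item Your distributional identity, including its boundary term, shows that the kernel is exactly the constants. It also shows that the range lies in the compatible hyperplane.
\item The range is therefore closed of codimension one inside a closed hyperplane, hence equal to it. Restricting to $\mathcal X$ gives the isomorphism the implicit function theorem needs.
\end{enumerate}
With this argument, the Lax--Milgram and $\mathcal C^{1,\alpha}$ steps become unnecessary.
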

We will also make use of the following estimate taken from  \cite{caffarelli1992boundary_reg_convex_potential}:
\begin{lemma}[Caffarelli] \label{lem:Caff_C1_estimate}
  Let $\Omega,\Omega_{*}\subset\mathbb{R}^{d}$ be bounded convex sets and consider two probability densities $f$, $g$ over $\Omega$, $\Omega_{*}$ such that $a\le f,g\leq A$ for some constants $a,A>0$. Let $\phi$  be the unique convex potential with average $0$ inducing the optimal map $(\nabla\phi)_{\sharp}f=g$. Then there exists $\delta>0$ such that
  \begin{equation*}
    \left\Vert \phi\right\Vert _{\mathcal{C}^{1,\delta}(\overline{\Omega})}\leq C,
  \end{equation*}
  where both $\delta$ and $C$ depend on $a$, $A$ and the maximum and minimum diameter of $\Omega$, $\Omega_{*}$.
\end{lemma}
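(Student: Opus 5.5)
This is Caffarelli's boundary regularity theorem, and the plan is to follow the geometric strategy of \cite{caffarelli1992boundary_reg_convex_potential}, keeping track of the dependence of the constants. The argument rests on three ingredients: the Monge--Amp\`ere measure of $\phi$ is doubling up to the boundary, sections of $\phi$ at boundary points are uniformly balanced, and sections decay geometrically, which yields the H\"older modulus of $\nabla\phi$.

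\emph{Step 1 (preliminary bounds and extension).} Since $\nabla\phi(\Omega)\subset\Omega_*$ and $\int_\Omega\phi=0$, $\phi$ is Lipschitz with constant $\sup_{\Omega_*}|y|$. Hence $\|\phi\|_{\mathcal C^{0,1}(\overline\Omega)}$ is controlled by the diameters. Next, extend $\phi$ to $\R^d$ by setting $\bar\phi:=(\phi^*)^*$, where the Legendre transform $\phi^*$ is taken over $\overline{\Omega}_*$. By convexity of $\Omega_*$ (McCann/Caffarelli), $\bar\phi$ solves in the Aleksandrov sense
\[
  \det D^2\bar\phi = \frac{f}{g(\nabla\bar\phi)}\,\mathds 1_\Omega, \qquad \partial\bar\phi(\R^d)\subset\overline\Omega_*.
\]
In particular the Monge--Amp\`ere measure $\mu_{\bar\phi}$ satisfies $\tfrac aA\mathds 1_\Omega\,dx\le\mu_{\bar\phi}\le\tfrac Aa\mathds 1_\Omega\,dx$. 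The same holds symmetrically for $\phi^*$ with the roles of $\Omega$ and $\Omega_*$ exchanged.

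\emph{Step 2 (doubling and strict convexity up to the boundary).} Convexity of $\Omega$ gives the following: for every convex set $S$ whose center of mass lies in $\overline\Omega$, one has $|S\cap\Omega|\ge c(d)\,|S\cap(\tfrac12 S)\cap\Omega|$ type bounds, so the measure $\mathds 1_\Omega\,dx$ is doubling on sections $S_h(x)=\{\bar\phi<\bar\phi(x)+\langle p,\cdot-x\rangle+h\}$ centered in $\overline\Omega$. Combined with Step 1, this gives the Aleksandrov-type estimates $|S_h(x)\cap\Omega|\,|S_h(x)|\approx h^d$ with constants depending only on $a$, $A$, $d$. Caffarelli's argument then shows that $\bar\phi$ is strictly convex on $\overline\Omega$. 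Indeed, a line segment in the contact set $\{\bar\phi=\ell\}$ would have to extend to $\partial\Omega$. Applying the same reasoning to $\phi^*$ on $\Omega_*$, and using the duality $\partial\phi^*=(\partial\bar\phi)^{-1}$ together with convexity of $\Omega_*$, rules this out.

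\emph{Step 3 (quantitative decay and compactness).} To make the strict convexity quantitative, I would argue by contradiction and compactness. Suppose sections $S_h(x)$ with $x\in\overline\Omega$ fail to be uniformly balanced, or fail the geometric inclusion $S_{h/2}(x)\subset(1-c)$-dilate of $S_h(x)$ about $x$, along some sequence. Normalize each section by John's lemma (an affine map sending it to a set comparable to the unit ball). The data $(\Omega,\Omega_*,f,g)$ transform to new convex domains and densities, and the bounds $a\le f,g\le A$ are preserved after the natural renormalization, since Monge--Amp\`ere is affine invariant. By Blaschke selection and stability of Aleksandrov solutions, extract a limit solution which violates the strict convexity of Step 2. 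This is the \textbf{main obstacle}: the normalized target domains may degenerate, and one must check that the limit still satisfies $\partial\bar\phi(\R^d)\subset$ a convex set with nonempty interior. I would handle this through the dual estimate $|S_h(x)|\,|\partial\bar\phi(S_h(x))|\approx h^d$, which prevents collapse.

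Once the uniform decay $\operatorname{diam} S_{h}(x)\le C h^{\varepsilon}$ and $S_h(x)\supset B(x,c h^{1/(1+\delta)})\cap\overline\Omega$ are established uniformly in $x\in\overline\Omega$, the standard argument gives $|\nabla\phi(x)-\nabla\phi(z)|\le C|x-z|^\delta$. Here one compares supporting planes at $x$ and $z$ inside a section of height $h\sim|x-z|^{1+\delta}$. This yields $\|\phi\|_{\mathcal C^{1,\delta}(\overline\Omega)}\le C$, with $\delta$ and $C$ depending only on $a$, $A$ and the inner and outer diameters of $\Omega$ and $\Omega_*$. These are exactly the quantities preserved in the compactness class of Step 3.
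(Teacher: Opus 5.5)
The paper gives no proof of this lemma; it is quoted from Caffarelli's boundary regularity paper. Your Step 1 and the final passage from one-sided bounds to the H\"older estimate are fine. The reconstruction in between does not hold up, because the section estimates driving Steps 2 and 3 are false at and near $\partial\Omega$ for the global extension $\bar\phi$.

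If $x_0\in\partial\Omega$, then $\partial\bar\phi(z)\subset\partial\Omega_*$ for $z\notin\overline\Omega$, so upper semicontinuity of the subdifferential yields a slope $p\in\partial\bar\phi(x_0)\cap\partial\Omega_*$. From $\bar\phi(z)=\sup_{y\in\overline\Omega_*}(\inner{z}{y}-\phi^*(y))$ one checks that $\bar\phi$ agrees with this supporting plane on the whole ray $x_0+N_{\overline\Omega_*}(p)$, where $N$ is the normal cone. So $S_h(x_0)$ is unbounded with infinite volume. Concretely, take $\Omega=\Omega_*=B_1$ and $f=g$ uniform. Then $\bar\phi=\frac12|x|^2$ in $B_1$ and $|x|-\frac12$ outside, so $S_h(e_1)\supset\{te_1: t\geq 1\}$. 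At the interior point $x=(1-\epsilon)e_1$, the section $S_h(x)$ reaches distance about $h/\epsilon$ when $\epsilon^2\ll h$, with $|S_h(x)\cap\Omega|\lesssim h^{d/2}$ while $|S_h(x)|\to\infty$ as $\epsilon\to 0$. Hence none of the following holds uniformly in $x\in\overline\Omega$: the estimate $|S_h(x)\cap\Omega|\,|S_h(x)|\approx h^d$, the dual estimate $|S_h(x)|\,|\partial\bar\phi(S_h(x))|\approx h^d$, the decay $\operatorname{diam}S_h(x)\le Ch^{\varepsilon}$, or the John normalization of $S_h(x)$.

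Doubling of $\mathds{1}_\Omega\,dx$ does not rescue this. Convexity gives $|\tfrac12 S\cap\Omega|\ge 2^{-d}|S\cap\Omega|$ for dilations about a point of $\overline\Omega$; the inequality you wrote is trivially true and carries no information. The section theory needs doubling for dilations about the center of mass, which for these sections lies far outside $\Omega$ or does not exist. A correct boundary argument has to work with the truncated sets $S_h(x)\cap\overline\Omega$. These are not sections of a solution equal to $\ell+h$ on their whole boundary, and one must prove a balancing (uniform density) property for them. This is exactly where convexity of both $\Omega$ and $\Omega_*$ enters, and your proposal contains no mechanism for it.

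The same missing ingredient is hidden in Step 2. Interior strict convexity of $\bar\phi$ (which uses convexity of $\Omega_*$) and interior differentiability of the dual potential (which uses convexity of $\Omega$) only reduce matters to one case: a contact set whose trace on $\overline\Omega$ is a segment contained in $\partial\Omega$, with slope $p\in\partial\Omega_*$. Excluding this configuration is the actual content of Caffarelli's boundary theorem. ``Applying the same reasoning to $\phi^*$'' does not do it. The extremal-point argument compares the Aleksandrov upper bound at the extremal point with a lower bound on the depth of the section. That depth bound requires the lower bound on the Monge--Amp\`ere measure on a fixed fraction of the section, which is precisely what is lost at boundary points.

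For the same reason the compactness in Step 3 does not close. Blow-up limits live on unbounded convex sets (half-spaces, cones) with unbounded targets, so the contradiction would need a strict convexity statement for those limit configurations, not Step 2 for bounded domains.

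A minor point: Step 1 bounds $\|\phi\|_{\mathcal C^{0,1}}$ by $\sup_{\Omega_*}|y|$, which depends on the position of $\Omega_*$ and not only on its diameters. The lemma's statement has the same imprecision.
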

Before proving Lemma \ref{lem:workaround}, let us show that it implies Lemma~\ref{lem: global C2 estimate}.

\begin{proof}[Proof of Lemma~\ref{lem: global C2 estimate}]
Let $\phi$ be the solution of \eqref{eq:MA_appendix} and fix $\alpha<\beta$. By Lemma \ref{lem:Caff_C1_estimate}, there exists $\delta>0$ such that $\phi \in \mathcal C^{1,\delta}$ with an estimate on $\Vert \phi\Vert_{\mathcal C^{1,\delta}(\overline \Omega)}$. Therefore, letting $\tilde{\alpha}:=\min(\alpha, \delta\beta)<\beta$, Lemmas \ref{lem:reg_of_comp1} and \ref{lem:prod_quotient_Holder} show that 
  \begin{equation*}
    \left\Vert f/g(\nabla\phi)\right\Vert _{\mathcal{C}^{0,\tilde{\alpha}}(\overline{\Omega})}
    \leq C\left(d,a,\Omega,\Omega_{*},\left\Vert f\right\Vert _{\mathcal{C}^{0,\beta}(\overline{\Omega})},\left\Vert g\right\Vert _{\mathcal{C}^{0,\beta}(\overline{\Omega}_*)}\right)=:M.
  \end{equation*}
Now note that $\phi=\phi_{f/g(\nabla\phi)}\in C^{2, \alpha}(\overline\Omega)$ in the notation of Lemma~\ref{lem:workaround}. Since the set 
\begin{equation*}
  \left\{ h\in\mathcal{C}^{0,\tilde{\alpha}}(\overline{\Omega})\mid h\geq a/\lVert g\rVert_{\mathcal{C}^{0}(\overline{\Omega}_*)},\;\left\Vert h\right\Vert _{\mathcal{C}^{0,\tilde{\alpha}}(\overline{\Omega})}\leq M\right\}
\end{equation*}
is compact as a subset of $\mathcal{C}^{0, \tilde{\alpha}-\varepsilon}(\overline\Omega)$, for $0<\varepsilon<\tilde \alpha$ arbitrary, applying Lemma~\ref{lem:workaround} shows 
  \begin{equation}\label{eq:workaround_estimate_only_f}
    \left\Vert \phi\right\Vert _{\mathcal{C}^{2,\tilde{\alpha}-\varepsilon}(\Omega)}
    =\left\Vert \phi_{f/g(\nabla\phi)}\right\Vert _{\mathcal{C}^{2,\tilde{\alpha}-\varepsilon}(\Omega)}
    \leq C\left(d,a,\Omega,\Omega_{*},\left\Vert f\right\Vert _{\mathcal{C}^{0,\beta}(\overline{\Omega})}, \left\Vert g\right\Vert _{\mathcal{C}^{0,\beta}(\overline{\Omega}_*)},\alpha,\beta\right).
  \end{equation}
  In particular this implies a $\mathcal C^2$ bound on $\phi$, thus we can bootstrap and use Lemmas \ref{lem:reg_of_comp1}, \ref{lem:prod_quotient_Holder} again to conclude
  \begin{equation*}
    \left\Vert f/g(\nabla\phi)\right\Vert _{\mathcal{C}^{0,\beta}(\overline{\Omega})}\leq C\left(d,a,\Omega,\Omega_{*},\left\Vert f\right\Vert _{\mathcal{C}^{0,\beta}(\overline{\Omega})},\left\Vert g\right\Vert _{\mathcal{C}^{0,\beta}(\overline{\Omega}_*)},\alpha,\beta\right),
  \end{equation*}
  which can be combined with Lemma~\ref{lem:workaround} again to produce the desired estimate.
\end{proof}
Now it only remains to prove Lemma \ref{lem:workaround}, which follows the same outline used in Section \ref{sec:time_reg_ifc} and in \cite{gonzalez2024_linearization_MA}. 

\begin{proof}[Proof of Lemma \ref{lem:workaround}]
The idea is to apply the implicit function theorem to the functional 
\begin{equation*}
  \Gamma:\mathcal{C}_{>0}^{0,\alpha}(\overline{\Omega})\times\mathcal{C}_{>0}^{2,\alpha}(\overline{\Omega})\longrightarrow\mathcal{C}^{0,\alpha}(\Omega),\qquad\Gamma(f,\phi):=\left(\det D^{2}\phi-|\Omega_{*}|\left\langle f\right\rangle _{\Omega}^{-1}f,\omega_{*}(\nabla\phi)\right),
\end{equation*}
where recall that $\omega_{*}$ is a convex defining function of $\Omega_{*}$. The domain of $\Gamma$ is defined by the  open sets
\begin{align*}
    \mathcal{C}_{>0}^{0,\alpha}(\overline{\Omega}) & 
    :=\left\{ f\in\mathcal{C}^{0,\alpha}(\overline{\Omega})\mid\inf_{\Omega}f>0\right\}, \\
    \mathcal{C}_{u}^{2,\alpha}(\overline{\Omega}) &
    :=\left\{ \phi\in\mathcal{C}^{2,\alpha}(\overline{\Omega})\mid\inf_{x\in\Omega,|e|=1}\inner{D^{2}\phi(x)e}{e} >0\right\} .
\end{align*}
From Lemma \ref{lem:reg_of_comp2} we immediately see that $\Gamma$ is of class $\mathcal C^1$ with differential
\begin{equation*}
  D_{\phi}\Gamma(f,\phi)\xi=\left(\textnormal{tr}\left[A[\phi]D^{2}\xi\right],\inner{\normal_{*}\left(\nabla\phi\right)}{\nabla\xi}\right),
  \qquad A[\phi] = (\det D^2\phi) \left(D^2\phi\right)^{-1},
\end{equation*}
where $A[\phi]$ is the cofactor matrix of $D^2\phi$. Therefore, showing that $D_\phi \Gamma (f,\phi)$ is a diffeomorphism is equivalent to showing that for each  $p\in\mathcal{C}^{0,\alpha}(\overline{\Omega})$, $q\in\mathcal{C}^{1,\alpha}(\partial\Omega)$, there is a unique solution $\xi \in \mathcal C^{2,\alpha}(\overline \Omega)$ of
\begin{equation}\label{eq:PDE1_workaround}
  \begin{cases}
    \textnormal{tr}\left[A[\phi]D^{2}\xi\right]=p, & \textnormal{in }\Omega\\
    \inner{\normal_{*}(\nabla\phi)}{\nabla\xi}=q, & \textnormal{on }\partial\Omega.
  \end{cases}
\end{equation}
To study this problem, we will re-write its boundary condition and note that it is equivalent to the same problem written in divergence form. This would be immediately obvious if $\phi$  were $\mathcal{C}^{3}$, since it is known that cofactor matrices are divergence free.

\begin{claim*}
    Let $\phi = \phi_f$ be the solution of \eqref{eq:MA_g_constant}. A function $\xi\in\mathcal{C}^{2,\alpha}(\overline{\Omega})$ solves \eqref{eq:PDE1_workaround} if and only if it is a weak solution of
  \begin{equation}\label{eq:PDE2_workaround}
    \begin{cases}
      \div\left(A[\phi]\nabla\xi\right)=p, & \textnormal{in }\Omega\\
      \inner{A[\phi]\nabla\xi}{\normal} =\left|A[\phi]\normal\right|q, & \textnormal{on }\partial\Omega.
    \end{cases}
  \end{equation}
  As a consequence, \eqref{eq:PDE1_workaround} has a solution if and only if the following compatibility condition holds:
  \begin{equation*}
    \int_{\Omega}p+\int_{\partial\Omega}\left|A[\phi]\normal\right|q=0.
  \end{equation*}
  Moreover, such a solution is unique up to additive constants.
\end{claim*}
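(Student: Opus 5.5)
The plan is to prove the divergence identity $\div(A[\phi]\nabla\xi)=\tr(A[\phi]D^2\xi)$ in the weak sense for merely $\mathcal C^{2,\alpha}$ potentials, and then the boundary identity, by approximation.

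\textbf{Interior identity.} Approximate $f$ by smooth $f_k\in\mathcal C^\infty(\overline\Omega)$ with $f_k\to f$ in $\mathcal C^{0,\alpha'}$ for some $\alpha'<\alpha$, and with $f_k$ bounded below uniformly. By \cite[Theorem 1.1]{chenLiuWang2021MAglobalReg} and bootstrapping, $\phi_k:=\phi_{f_k}$ lies in $\mathcal C^3(\overline\Omega)$. For such $\phi_k$ the cofactor matrix is row-wise divergence free, $\partial_i A^{ij}[\phi_k]=0$. Hence, for any $\xi\in\mathcal C^{2,\alpha}$, we have $\div(A[\phi_k]\nabla\xi)=\tr(A[\phi_k]D^2\xi)$ pointwise. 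Multiplying by $\theta\in\mathcal C^1(\overline\Omega)$ and integrating gives
\[
-\int_\Omega\inner{A[\phi_k]\nabla\xi}{\nabla\theta}+\int_{\partial\Omega}\theta\inner{A[\phi_k]\nabla\xi}{\normal}=\int_\Omega\theta\,\tr(A[\phi_k]D^2\xi).
\]
To pass $k\to\infty$ I need $D^2\phi_k\to D^2\phi$ uniformly. By Caffarelli's estimate (Lemma \ref{lem:Caff_C1_estimate}) together with \cite{chenLiuWang2021MAglobalReg}, I expect uniform $\mathcal C^{2,\alpha'}$ bounds; Arzelà--Ascoli and uniqueness of the normalized Brenier potential then identify the limit as $\phi$. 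I expect this convergence, obtained without circularly invoking Lemma \ref{lem: global C2 estimate}, to be the main obstacle. If a uniform bound from \cite{chenLiuWang2021MAglobalReg} is not available, I would instead argue directly that $\tr(A[\phi]D^2\xi)$ equals the distributional divergence. To do so, I would mollify $\phi$ in the interior, use the distributional identity $\partial_iA^{ij}[\phi]=0$ (valid for $\mathcal C^2$ functions, since cofactors are null Lagrangians), and handle the boundary terms through the trace identity in the next step.

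\textbf{Boundary identity.} As in Lemma \ref{lem:magic_lemma}, differentiate $\omega_*(\nabla\phi)=0$ tangentially along $\partial\Omega$; this only needs $\phi\in\mathcal C^2$. The result is $D^2\phi\,\normal_*(\nabla\phi)\parallel\normal$. Since the cofactor matrix is $A=\det(D^2\phi)(D^2\phi)^{-1}$, it follows that $\normal_*(\nabla\phi)=A\normal/|A\normal|$, where the sign is fixed by the outward orientation. Therefore $\inner{\normal_*(\nabla\phi)}{\nabla\xi}=q$ holds exactly when $\inner{A\nabla\xi}{\normal}=|A\normal|q$.

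\textbf{Equivalence and its consequences.} Combining the two identities shows that a $\mathcal C^{2,\alpha}$ solution of \eqref{eq:PDE1_workaround} is a weak solution of \eqref{eq:PDE2_workaround}, and conversely. For the converse, a $\mathcal C^{2,\alpha}$ weak solution satisfies the interior equation pointwise by the identity, and satisfies the conormal condition by the usual choice of test functions.

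The compatibility condition comes from testing with $\theta\equiv1$. Its sign convention matches the weak formulation; I would double-check this sign.

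Existence follows from Lax--Milgram on $H^1$ functions with zero mean, using the ellipticity of $A$ from Remark \ref{rmk:ellipticity_D2phi}. Regularity up to $\mathcal C^{2,\alpha}$ then comes from Schauder theory for the equivalent nondivergence oblique problem \cite[Theorem 6.31]{gilbarg_trudinger1977elliptic}, where obliqueness is $\inner{\normal_*(\nabla\phi)}{\normal}>0$ by \cite{Urbas97}.

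Uniqueness up to constants: if $p=q=0$, testing with $\xi$ gives $\int\inner{A\nabla\xi}{\nabla\xi}=0$, so $\xi$ is constant.
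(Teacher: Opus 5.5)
Your fallback is precisely the paper's argument, and it makes your primary route unnecessary, along with the ``main obstacle'' you flagged. The Piola identity $\partial_i A^{ij}[\psi]=0$ holds for every $\mathcal C^3$ function $\psi$, not only for Monge--Amp\`ere solutions. So there is no reason to approximate $f$ and rely on uniform $\mathcal C^{2,\alpha'}$ bounds for $\phi_{f_k}$. Those bounds would require exactly the quantitative estimate of Lemma~\ref{lem: global C2 estimate}, which the paper derives from this claim via Lemma~\ref{lem:workaround}.

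Do not mollify only in the interior, though. Since $\partial\Omega\in\mathcal C^{2,\beta}$, extend $\phi$ to a $\mathcal C^{2,\alpha}$ function on a neighbourhood of $\overline\Omega$ and mollify. Then $A[\phi_\varepsilon]\to A[\phi]$ uniformly on $\overline\Omega$, the classical divergence theorem applies to $A[\phi_\varepsilon]\nabla\xi\in\mathcal C^1(\overline\Omega)$, and the boundary integral passes to the limit directly. The identity $\normal_*(\nabla\phi)=A[\phi]\normal/|A[\phi]\normal|$ plays no role in justifying these boundary terms. It only converts one boundary condition into the other, as in the paper.

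Where you genuinely depart from the paper is the converse. The paper solves auxiliary problems with coefficients $A[\phi_\varepsilon]$ and passes to the limit via Schauder estimates. As written, this tacitly needs the data adjusted so that the perturbed compatibility condition $\int_\Omega p=\int_{\partial\Omega}|A[\phi_\varepsilon]\normal|q$ holds. You instead reuse the integration-by-parts identity for the $\mathcal C^{2,\alpha}$ function $\xi$, subtract the weak formulation, and localize with $\theta\in\mathcal C^\infty_c(\Omega)$ and then arbitrary $\theta$. This is shorter and needs no auxiliary solutions.

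Finally, your doubt about the sign is justified. Testing with $\theta\equiv1$ gives $\int_\Omega p=\int_{\partial\Omega}|A[\phi]\normal|\,q$, consistent with the compatibility condition of Section~\ref{sec:time_reg_ifc}, so the `$+$' in the statement is a slip.
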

\begin{proof}[Proof of the claim]
    Noting that $D^2\phi>0$, the boundary condition in~\eqref{eq:PDE2_workaround} is equivalent to $\inner{(D^2\phi)^{-1}\nabla\xi}{\normal} =\left|(D^2\phi)^{-1}\normal\right|q$, hence is equivalent to the boundary condition in~\eqref{eq:PDE1_workaround} as in the proof of Lemma \ref{lem:magic_lemma}. 
    Assume we have a solution $\xi\in \mathcal{C}^{2, \alpha}(\overline\Omega)$ of \eqref{eq:PDE1_workaround} and let $\phi_{\varepsilon}$ be a mollification of $\phi$. Then, using that $\div(A[\phi_{\varepsilon}])=0$, we have
  \begin{equation*}
    \int_{\Omega}\textnormal{tr}\left(A[\phi_{\varepsilon}]D^{2}\xi\right)\theta=\int_{\Omega}\div\left(A[\phi_{\varepsilon}]\nabla\xi\right)\theta=-\int_{\Omega}\inner{A[\phi_{\varepsilon}]\nabla\xi}{\nabla\theta}+\int_{\partial\Omega}\inner{A[\phi_{\varepsilon}]\nabla\xi}{\normal} \theta.
  \end{equation*}
  Letting $\varepsilon\to0$ we see that $\xi$  satisfies the weak formulation of (\ref{eq:PDE2_workaround}). Now assume $\xi$  is a weak solution of (\ref{eq:PDE2_workaround}) and consider $\xi_{\varepsilon}$ where for each $\varepsilon>0$ we have
  \begin{equation*}
    \begin{cases}
      \div\left(A[\phi_{\varepsilon}]\nabla\xi_\varepsilon\right)=p, & \textnormal{in }\Omega\\
      \left\langle A[\phi_{\varepsilon}]\nabla\xi_\varepsilon,\normal\right\rangle =\left|A[\phi_\varepsilon]\normal\right|q, & \textnormal{on }\partial\Omega.
    \end{cases}
  \end{equation*}
  Using that the previous equation is also written in non-divergence form, the classical Schauder estimates (see \cite[Thm. 6.30]{gilbarg_trudinger1977elliptic}) tell us that $\xi_{\varepsilon}$ has a bound $\left\Vert \xi_{\varepsilon}\right\Vert _{\mathcal{C}^{2,\alpha}(\overline{\Omega})}\leq C$ independent of $\varepsilon$. Then, a simple compactness argument tells us that $\xi_{\varepsilon}\to\xi$  in $\mathcal{C}^{2}$ as $\varepsilon\to0$. Then, we can let $\varepsilon\to 0$ in the equalities
  \begin{equation*}
    \textnormal{tr}\left(A[\phi_{\varepsilon}]D^{2}\xi_{\varepsilon}\right)=p\textnormal{ in }\Omega,\qquad\qquad\inner{A[\phi_{\varepsilon}]\nabla\xi_{\varepsilon}}{\normal} =\left|A[\phi_{\varepsilon}]\normal\right|q\textnormal{ on }\partial\Omega,
  \end{equation*}
  which hold pointwise. Finally, since $\partial\Omega$ is $\mathcal{C}^{2, \beta}$ regular, the existence of globally regular solutions under the compatibility condition is a simple consequence of the Lax-Milgram Theorem and elliptic regularity.
\end{proof}

Next, we consider a fixed $f_0\in \mathcal C_{>0}^{0,\alpha}(\overline \Omega)$, its corresponding potential $\phi_{f_0}\in \mathcal C^{2,\alpha}(\overline \Omega)$ and will show that the mapping $f\mapsto\phi_{f}$ is $\mathcal{C}^{1}$ in a neighborhood of $f_0$. To do this, first note the previous claim shows that $D_{\phi}\Gamma(f_0,\phi_{f_0})$ is an isomorphism from $\mathcal{X}$ to $\mathcal{Y}_{\phi_0}$ defined by 
\begin{align*}
    \mathcal{X} & :=\left\{ \xi\in\mathcal{C}^{2,\alpha}(\overline{\Omega})\mid\int_{\Omega}\xi=0\right\},\\
    \quad\mathcal{Y}_{\phi_{0}}& := \left\{ (p,q)\in\mathcal{C}^{0,\alpha}(\overline{\Omega})\times\mathcal{C}^{1,\alpha}(\partial\Omega)\mid\int_{\Omega}p+\int_{\partial\Omega}\left|A[\phi_{f_0}]\normal\right|q=0\right\} .
\end{align*}
Then define, writing $\Gamma(f, \phi)=(\Gamma^{(1)}(f,\phi), \Gamma^{(2)}(f,\phi))$,
\begin{equation*}
  \tilde{\Gamma}:\mathcal{C}_{>0}^{0,\alpha}(\overline{\Omega})\times\left(\mathcal{X}\cap\mathcal{C}_{u}^{2,\alpha}(\overline{\Omega})\right)\longrightarrow\mathcal{Y}_{\phi_{0}},\qquad\tilde{\Gamma}(f,\phi)=\left(\Gamma^{(1)}(f,\phi)-C_{f,\phi}\left\langle f\right\rangle _{\Omega}^{-1}f,\Gamma^{(2)}(f,\phi)\right),
\end{equation*}
where $C_{f,\phi}$ is a constant chosen to ensure $\tilde{\Gamma}(f,\phi)\in\mathcal{Y}_{\phi_{0}}$, i.e.
\begin{equation*}
  C_{f,\phi}=\int_{\Omega}\Gamma^{(1)}(f,\phi)+\int_{\partial\Omega}\left|A[\phi_{0}]\normal\right|\Gamma^{(2)}(f,\phi).
\end{equation*}
We now check that $\tilde{\Gamma}$ satisfies the hypothesis of the implicit function theorem. The smoothness of $\tilde{\Gamma}$ is a consequence of the smoothness of $\Gamma$, and we calculate
\begin{equation*}
  D_{\phi}\tilde{\Gamma}(f,\phi)\xi=D_{\phi}\Gamma(f,\phi)\xi+\left(\int_{\Omega}D_{\phi}\Gamma^{(1)}(f,\phi)\xi+\int_{\partial\Omega}\left|A[\phi_{0}]\normal\right|D_{\phi}\Gamma^{(2)}(f,\phi)\xi,0\right)\left\langle f\right\rangle _{\Omega}^{-1}f.
\end{equation*}
Note that the second term is $0$ at $(f_0,\phi_{0})$ because $D_{\phi}\Gamma(f_0,\phi_{0})\xi\in\mathcal{Y}_{\phi_{0}}$, hence 
\begin{equation*}
  D_{\phi}\tilde{\Gamma}(f_0,\phi_{0})=D_{\phi}\Gamma(f_0,\phi_0)|_{\mathcal{X}},
\end{equation*}
showing that $D_{\phi}\tilde{\Gamma}(f_0,\phi_{0}):\mathcal{X}\longrightarrow\mathcal{Y}_{\phi_{0}}$ is a diffeomorphism and the implicit function theorem applies. Thus there is a neighborhood $\mathcal{U}\subset\mathcal{C}_{>0}^{0,\alpha}(\Omega)$ of $f_0$ and a $\mathcal C^1$ mapping 
\begin{equation*}
  f\in\mathcal{U}\mapsto\tilde{\phi}_{f}\in\mathcal{C}_{u}^{2,\alpha}(\Omega)\cap\mathcal{X},\quad\textnormal{such that }\tilde{\Gamma}(f,\tilde{\phi}_{f})=0.
\end{equation*}
Finally, we verify that $\tilde \phi_f = \phi_f$. From $\tilde{\Gamma}(f,\tilde{\phi}_{f})=0$ we conclude that for some $C\in \R$,
\begin{equation*}
  \begin{cases}
  \det D^{2}\tilde{\phi}_{f}=\left\langle f\right\rangle _{\Omega}^{-1}\left(|\Omega_{*}|-C\right)f, & \textnormal{in }\Omega\\
  \nabla\tilde{\phi}_{f}(\Omega)=\Omega_{*}.
  \end{cases}
\end{equation*}
Since $\tilde{\phi}_{f}$ is strictly convex, $\nabla\tilde{\phi}_f$ is a one-to-one mapping and we can perform the change of variables $y=\nabla\tilde\phi_{f}(x)$, which shows
\begin{equation*}
  |\Omega_{*}|=\int_{\nabla\tilde{\phi}_{f}(\Omega)}1\,dy=\int_{\Omega}\det D^{2}\tilde{\phi}_{f}(x)\,dx=\left\langle f\right\rangle _{\Omega}^{-1}\left(|\Omega_{*}|-C\right)\int_{\Omega}f=|\Omega_{*}|-C.
\end{equation*}
Therefore, we have $C=0$ and $\tilde \phi_f = \phi_f$, finishing the proof.
\end{proof}

\section*{Acknowledgments}
The authors acknowledge Maria Gualdani for constant support and insightful discussions.
MGD is partially supported by NSF Grant DMS-2205937. JK is partially supported by NSF Grant DMS-2246606. 

\bibliographystyle{unsrt}  
\bibliography{references}
\end{document}